\documentclass[10pt,twocolumn,twoside]{IEEEtran}

\IEEEoverridecommandlockouts                              	 


\pdfminorversion=4

\overfullrule=5pt


\title{Robust Decentralized Secondary Frequency Control in Power Systems: Merits and Trade-Offs}
\author{Erik Weitenberg, Yan Jiang, Changhong Zhao, Enrique Mallada, Claudio De Persis, and Florian D\"orfler%
\thanks{The work of E.~Weitenberg and C.~De Persis is partially
supported by the NWO-URSES project ENBARK,  the DST-NWO IndoDutch
Cooperation on ``Smart Grids - Energy management strategies for interconnected
smart microgrids" and  the STW Perspectief program
``Robust Design of Cyber-physical Systems" -- ``Energy Autonomous Smart Microgrids".
The work of F. D\"orfler is supported by ETH funds and the SNF Assistant Professor Energy Grant \#160573.
The work of C. Zhao was supported by ARPA-E under the NODES program (Contract No. DE-AC36-08GO28308) and DOE under the ENERGISE program (Award No. DE-EE0007998).
The work of Y. Jiang and E. Mallada was supported through NSF grants (CNS 1544771, EPCN 1711188, AMPS 1736448, CAREER 1752362) and U.S. DoE award DE-EE0008006.
E. Weitenberg and C. De Persis are with the University of Groningen, the Netherlands. Emails: {\tt \{e.r.a.weitenberg,c.de.persis\}@rug.nl}. %
Y. Jiang and E. Mallada are with the Johns Hopkins University, Baltimore, MD 21218 USA. Emails: {\tt \{yjiang,mallada\}@jhu.edu}. %
C. Zhao is with the National Renewable Energy Laboratory, Golden, CO 80401, USA. Email: {\tt changhong.zhao@nrel.gov}. %
F. D\"orfler is with the Automatic Control Laboratory at the Swiss Federal Institute of Technology (ETH) Z\"urich, Switzerland.
Email: {\tt dorfler@ethz.ch}. %
%
\nocite{CZ-EM-FD:15}
}%
}%



\usepackage[utf8]{inputenc}
\usepackage[T1]{fontenc}
\usepackage{enumitem}

\usepackage{graphicx,xcolor}
\usepackage[vlined,ruled]{algorithm2e}
\usepackage{subfigure}

\usepackage{cite}
\usepackage{dsfont}

\usepackage[cmex10]{amsmath}
\usepackage{amsthm}
\usepackage{amsfonts}
\usepackage{amssymb}
\usepackage{mathrsfs}

\newtheoremstyle{bfnote}%
{}{}%
{\itshape}{}%
{\bfseries}{.}%
{ }%
{\thmname{#1}\thmnumber{ #2}\thmnote{ (#3)}}

\theoremstyle{bfnote}
\definecolor{magenta}{RGB}{0,56,167}

\newtheorem{theorem}{Theorem}
\newtheorem{lemma}[theorem]{Lemma}
\newtheorem{corollary}[theorem]{Corollary}

\newtheorem{definition}{Definition}
\newtheorem{problem}{Problem}
\newtheorem{remark}{Remark}
\newtheorem{assumption}{Assumption}

\newcommand{\subscr}[2]{{#1}_{\textup{#2}}}
\newcommand{\until}[1]{\{1,\dots,#1\}}

\newcommand{\real}[0]{\mathbb R}
\newcommand{\torus}[0]{\mathbb T}

\renewcommand{\top}{\text{T}}

\newcommand{\bR}{\ensuremath{{\mathbb R}}}

\newcommand{\cB}{\ensuremath{{\mathcal B}}}

\DeclareMathOperator*{\minimize}{minimize}
\DeclareMathOperator*{\sto}{subject\,to}
\DeclareMathOperator*{\symm}{symm}
\DeclareMathOperator*{\esssup}{ess\,sup}

\DeclareSymbolFont{bbold}{U}{bbold}{m}{n}
\DeclareSymbolFontAlphabet{\mathbbold}{bbold}

\newcommand{\vectorones}[1][]{\mathds{1}_{#1}}
\newcommand{\vectorzeros}[1][]{\mathbbold{0}_{#1}}


\usepackage[prependcaption,colorinlistoftodos]{todonotes}



\newcommand\oprocendsymbol{\hbox{$\square$}}
\newcommand\oprocend{\relax\ifmmode\else\unskip\hfill\fi\oprocendsymbol}



\begin{document}
\maketitle
\thispagestyle{empty}
\pagestyle{empty}

\begin{abstract}
Frequency restoration in power systems is conventionally performed by broadcasting a centralized signal to local controllers. As a result of the energy transition, technological advances, and the scientific interest in distributed control and optimization methods, a plethora of distributed frequency control strategies have been proposed recently that rely on communication amongst local controllers.
In this paper we propose a fully decentralized leaky integral controller for frequency restoration that is derived from a classic lag element. We study steady-state, asymptotic optimality, nominal stability, input-to-state stability, noise rejection, transient performance, and robustness properties of this controller in closed loop with a nonlinear and multivariable power system model. We demonstrate that the leaky integral controller can strike an acceptable trade-off between performance and robustness as well as between asymptotic disturbance rejection and transient convergence rate by tuning its DC gain and time constant. We compare our findings to conventional decentralized integral control and distributed-averaging-based integral control in theory and simulations.
\end{abstract}


\section{Introduction}

The core operation principle of an AC power system is to balance supply and demand in nearly real time. Any instantaneous imbalance results in a deviation of the global system frequency from its nominal value. Thus, a central control task is to regulate the frequency in an economically efficient way and despite fluctuating loads, variable generation, and possibly faults. Frequency control is conventionally performed in a hierarchical architecture: the foundation is made of the generators' rotational inertia providing an instantaneous frequency response, and three control layers -- primary (droop), secondary automatic generation (AGC), and tertiary (economic dispatch) --  operate at different time scales on top of it~\cite{JM-JWB-JRB:08,bevrani2009robust}. Conventionally, droop controllers are installed at synchronous machines and operate fully decentralized, but they cannot by themselves restore the system frequency to its nominal value. To ensure a correct steady-state frequency and a fair power sharing among generators, centralized AGC and economic dispatch schemes are employed on longer time scales.

This conventional operational strategy is currently challenged by increasing volatility on all time scales (due to variable renewable generation and increasing penetration of low-inertia sources) as well as the ever-growing complexity of power systems integrating distributed generation, demand response, microgrids, and HVDC systems, among others. Motivated by these paradigm shifts and recent advances in distributed control and optimization, an active research area has emerged developing more flexible distributed schemes to replace or complement the traditional frequency control layers.

In this article we focus on secondary control. We refer to \cite[Section IV.C]{molzahn2017survey} for a survey covering recent approaches amongst which we highlight semi-centralized broadcast-based schemes similar to AGC \cite{FD-SG:16,MA-DDV-HS-KHJ:14,QS-JG-JMV:13} and distributed schemes relying consensus-based averaging  \cite{CZ-EM-FD:15,FD-JWSP-FB:14a,CDP-NM-JS-FD:16,ST-MB-CDP:16,MA-DVD-HS-KHJ:13,weitenberg2017exponential} or primal dual methods \cite{li2016connecting, zhang2015real, CZ-EM-SL-JB:16, mallada2017optimal} that all rely on communication amongst controllers.
However, due to security, robustness, and economic concerns it is  desirable to regulate the frequency without relying on communication. A seemingly obvious and often advocated solution is to complement local proportional droop control with decentralized integral control~\cite{NA-SG:13b,MA-DDV-HS-KHJ:14,CZ-EM-FD:15}. In theory such schemes ensure nominal and global closed-loop stability at a correct steady-state frequency, though in practice they suffer from poor robustness to measurement bias and clock drifts~\cite{MA-DVD-HS-KHJ:13,MA-DDV-HS-KHJ:14,FD-SG:16,JS-RO-CAH-JR:15}. Furthermore, the power injections resulting from decentralized integral control generally do not lead to an efficient allocation of generation resources.
A conventional remedy to overcome performance and robustness issues of integral controllers is to implement them as lag elements with finite DC gain \cite{franklin1994feedback}. Indeed, such decentralized lag element approaches have been investigated by practitioners:  \cite{NA-SG:13b} provides insights on the closed-loop steady states and transient dynamics based on numerical analysis and asymptotic arguments, \cite{Heidari2017416} provides a numerical certificate for ultimate boundedness, and \cite{han2017analysis} analyses lead-lag filters based on a numerical small-signal analysis.

Here we follow the latter approach and propose a fully decentralized {\em leaky integral controller} derived from a standard lag element. We consider this controller in feedback with a nonlinear and multivariable multi-machine power system model and provide a formal analysis of the closed-loop system concerning
$(i)$ steady-state frequency regulation, power sharing, and dispatch properties,
$(ii)$ the transient dynamics in terms of nominal exponential stability and input-to-state stability with respect to disturbances affecting the dynamics and controller, and
$(iii)$ the dynamic performance as measured by the $\mathcal H_{2}$-norm.
All of these properties are characterized by {precisely quantifiable} trade-offs -- dynamic versus steady-state performance as well nominal versus robust performance -- that can be set by tuning the DC gain and time constant of our proposed controller. We
$(iv)$ compare our findings with the corresponding properties of decentralized integral control, and
$(v)$ we illustrate our analytical findings with a detailed simulation study based on the IEEE 39 power system.
We find that our proposed fully decentralized leaky integral controller is able to strike an acceptable trade-off between dynamic and steady-state performance and can compete with other communication-based distributed controllers.

The remainder of this article is organized as follows.
Section~\ref{sec: power system frequency control} lays out the problem setup in power system frequency control.
Section~\ref{sec: decentralized integral control} discusses the pros and cons of decentralized integral control and proposes the leaky integral controller.
Section~\ref{sec: leaky integral control} analyzes the steady-state, stability, robustness, and optimality properties of this leaky integral controller.
Section~\ref{sec: case study} illustrates our results in a numerical case study.
Finally, Section~\ref{sec: summary} summarizes and discusses our findings.

Key to the analysis of part of the results in this paper (Section IV.B) is a strict Lyapunov function. A first attempt to arrive at one was made in preliminary work \cite{CZ-EM-FD:15}. The current paper is substantially different from \cite{CZ-EM-FD:15}, as it establishes several novel and stronger results, it provides additional context, motivation and possible implications, and it discusses the trade-offs that arise from the tunable controller parameters.%


\section{Power System Frequency Control}
\label{sec: power system frequency control}

\subsection{System Model}

Consider a lossless, connected, and network-reduced power system with $n$ generators modeled by swing equations {\cite{JM-JWB-JRB:08}}
\begin{subequations}%
\label{eq: open loop}%
\begin{align}%
\dot \theta =&\, \omega
\\
M \dot \omega =& - D\omega + P^{*} -\nabla U(\theta)  + u\,,
\label{eq: open loop - freq}%
\end{align}%
\end{subequations}%
where $\theta \in \torus^{n}$ and $\omega \in \real^{n}$ are the generator rotor angles and frequencies relative to the utility frequency given by $2\pi 50$\,Hz or $2\pi 60$\,Hz. The diagonal matrices $M,D \in \real^{n \times n}$ collect the inertia and damping coefficients $M_{i},D_{i}>0$, respectively. The generator primary (droop) control is integrated in the damping coefficient $D_{i}$, $P^{*} \in \real^{n}$  is vector of  {net power injections (local generation minus local load in the reduced model)}, and $u\in \real^{n}$ is a control input to be designed later.  Finally, the magnetic energy stored in the purely inductive (lossless) power transmission lines is (up to a constant) given by
\begin{equation*}
U(\theta) = - \frac{1}{2} \sum\nolimits_{i,j=1}^{n} B_{ij} V_{i} V_{j} \cos(\theta_{i}-\theta_{j})  \,,
\end{equation*}
where $B_{ij} \geq 0$  is the susceptance of the line connecting generators $i$ and $j$ with terminal voltage magnitudes $V_{i}, V_{j}>0$, which are assumed to be constant.

Observe that the vector of power injections 
\begin{equation}\label{eq:power_flow}
\left( \nabla U(\theta) \right)_{i} = \sum\nolimits_{j=1}^{n}B_{ij} V_{i} V_{j} \sin(\theta_{i}-\theta_{j})
\end{equation}
satisfies a zero net power flow balance: $\vectorones[n]^{T} \nabla U(\theta) = 0$, where $\vectorones[n] \in \real^{n}$ is the vector of unit entries.
In what follows, we will also write these quantities in compact  notation as
\[  U(\theta) = -\vectorones[]^\top \Gamma \cos(\cB^\top \theta), \quad
    \nabla U(\theta) = \cB \Gamma \sin(\cB^\top \theta)\,,  \]
    where $\cB \in \real^{n \times m}$ is the incidence matrix \cite{fb:17} of the power transmission grid connecting the $n$ generators with $m$ transmission lines, and  $\Gamma \in \real^{m \times m}$ is the diagonal matrix with its diagonal entries being all the nonzero ${V_i V_j} B_{ij}$'s corresponding to the susceptance and voltage of {the} $i$th transmission line.

We note that all of our subsequent developments can also be extended to more detailed structure-preserving models with first-order dynamics (e.g., due to power converters), algebraic load flow equations, and variable voltages by using the techniques developed in \cite{CDP-NM-JS-FD:16,CZ-EM-FD:15}. In the interest of clarity, we present our ideas for the concise albeit stylized model~\eqref{eq: open loop}.

\subsection{Secondary Frequency Control}

In what follows, we refer to a solution $(\theta(t),\omega(t))$ of \eqref{eq: open loop} as a {\em synchronous solution} if it is of the form $\dot \theta(t) = \omega(t) = \subscr{\omega}{sync} \vectorones[n]$, where $\subscr{\omega}{sync}$ is the synchronous frequency.

\begin{lemma}[Synchronization frequency]\label{Lemma: sync frequency}
If there is a synchronous solution to the powern system model \eqref{eq: open loop}, then the synchronous frequency is given by
\begin{equation}
	\subscr{\omega}{sync} = \frac{\sum\nolimits_{i=1}^{n} P_{i}^{*} + \sum\nolimits_{i=1}^{n}  u_{i}^{*} }{\sum\nolimits_{i=1}^{n} D_{i}} \,,
	\label{eq: sync freq}
\end{equation}
where $u_{i}^{*}$ denotes the steady-state control action.
\end{lemma}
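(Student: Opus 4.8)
The plan is to exploit that along a synchronous solution all rotor angles rotate at the common rate $\subscr{\omega}{sync}$, so that the angle differences $\theta_i(t)-\theta_j(t)$ are frozen in time and hence $\nabla U(\theta(t))$ is constant; the synchronous frequency is then pinned down by projecting the balance in~\eqref{eq: open loop - freq} onto $\vectorones[n]$ and invoking the zero net power flow property. First I would substitute the ansatz $\dot\theta(t)=\omega(t)=\subscr{\omega}{sync}\vectorones[n]$ into~\eqref{eq: open loop}. Since $\omega$ is then constant in time, $M\dot\omega=0$, and~\eqref{eq: open loop - freq} reduces to the algebraic identity
\begin{equation*}
0 = -D\,\subscr{\omega}{sync}\vectorones[n] + P^{*} - \nabla U(\theta(t)) + u(t) \,.
\end{equation*}
Because $P^{*}$ and $D$ are constant and $\nabla U(\theta(t))$ is constant by the observation above, this forces $u(t)$ to be constant as well, equal to the steady-state control action $u^{*}$; this is the one step that deserves a moment of care, and it is precisely what makes the quantity $u_i^{*}$ in the statement well defined.

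The second step is purely algebraic: premultiply the resulting balance
\begin{equation*}
D\,\subscr{\omega}{sync}\vectorones[n] = P^{*} - \nabla U(\theta) + u^{*}
\end{equation*}
by $\vectorones[n]^{T}$. Using the zero net power flow balance $\vectorones[n]^{T}\nabla U(\theta)=0$ recorded after~\eqref{eq:power_flow} --- which holds because $\nabla U(\theta)=\cB\Gamma\sin(\cB^{\top}\theta)$ lies in the range of the incidence matrix $\cB$ and $\vectorones[n]^{T}\cB = 0$ --- the middle term drops out, and since $D$ is diagonal with entries $D_i>0$ we obtain $\left(\sum\nolimits_{i=1}^{n} D_i\right)\subscr{\omega}{sync} = \sum\nolimits_{i=1}^{n} P_i^{*} + \sum\nolimits_{i=1}^{n} u_i^{*}$. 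Dividing by $\sum\nolimits_{i=1}^{n} D_i > 0$ yields~\eqref{eq: sync freq}.

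There is essentially no hard part here: the only subtlety is arguing that $\nabla U$, and therefore $u$, is time-invariant along a synchronous solution so that $u^{*}$ makes sense, after which the claim follows from a single projection onto $\vectorones[n]$ using the structural identity $\vectorones[n]^{T}\cB = 0$. For completeness I would also note that the lemma is conditional --- it gives the value of $\subscr{\omega}{sync}$ \emph{if} a synchronous solution exists, but does not assert existence, which is taken up separately in the later stability analysis.
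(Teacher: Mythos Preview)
Your proposal is correct and follows essentially the same approach as the paper: substitute the synchronous ansatz into \eqref{eq: open loop - freq}, reduce to the algebraic balance $D\subscr{\omega}{sync}\vectorones[n]+\nabla U(\theta)=P^{*}+u$, and premultiply by $\vectorones[n]^{\top}$ using $\vectorones[n]^{\top}\nabla U(\theta)=0$. The paper's proof is terser and does not spell out the constancy of $\nabla U(\theta)$ and $u$ along a synchronous solution, but your added justification is sound and the argument is otherwise identical.
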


\begin{proof}
In the synchronized case, \eqref{eq: open loop - freq} reduces to $D\subscr{\omega}{sync} \vectorones[n] + \nabla U(\theta)  = P^{*}+ u$. After multiplying this equation 
by $\vectorones[n]^\top$ and using that $\vectorones[n]^{T} \nabla U(\theta) = 0$, we arrive at the claim \eqref{eq: sync freq}.
\end{proof}

Observe from \eqref{eq: sync freq} that $\subscr{\omega}{sync}=0$ if and only if all injections are balanced: $\sum_{i=1}^{n} P_i^{*}+u_i^{*}=0$. In this case, a synchronous solution coincides with an equilibrium $(\theta^*,\omega^{*},u^{*}) \in \torus^{n} \times  \{\vectorzeros[n]\} \times \real^{n}$ of \eqref{eq: open loop}. Our first objective is frequency regulation, also referred to as secondary frequency control.

\begin{problem}[Frequency {restoration}]
\label{Problem: freq reg}
Given an unknown constant vector $P^{*}$, design a control strategy $u = u(\omega)$ to stabilize the power system model \eqref{eq: open loop} to an equilibrium $(\theta^*,\omega^{*},u^{*}) \in \torus^{n} \times  \{\vectorzeros[n]\} \times \real^{n}$ so that $\sum_{i=1}^{n} P_i^{*}+u_i^{*}=0$.
\end{problem}

Observe that there are manifold choices of $u^{*}$ to achieve this task. Thus, a further objective is the most economic allocation of steady-state control inputs $u^{*}$ given by a solution to the following
 {\em optimal dispatch problem:}
\begin{subequations}%
\label{eq: econ disp}%
\begin{align}%
{\minimize}_{u \in \real^{n}} \;  &
	\sum\nolimits_{i=1}^{n}  a_i u_i^2
\label{eq:ed.obj}
\\
\!\sto\;  &
\sum\nolimits_{i=1}^{n} P_{i}^{*} + \sum\nolimits_{i=1}^{n}  u_{i} = 0\,.
\label{eq:ed.cstr}%
\end{align}%
\end{subequations}%
The term $a_i u_i^2$ with $a_i>0$ is the quadratic generation cost for generator $i$. Observe that the unique minimizer $u^{\star}$ of this linearly-constrained quadratic program \eqref{eq: econ disp} guarantees {\em identical marginal costs} at optimality \cite{FD-JWSP-FB:14a, ST-MB-CDP:16}:%
\begin{equation}
a_{i} u_{i}^{\star} = a_{j} u_{j}^{\star}
\quad \forall i,j \in \until n\,.
\label{eq: id marginal costs}
\end{equation}
We remark that a special case of the identical marginal cost criterion \eqref{eq: id marginal costs} is {\em fair proportional power sharing} \cite{JMG-JCV-JM-LGDV-MC:11}
when the coefficients $a_{i}$ are chosen inversely to a reference power $\bar P_{i}>0$ (normally the power rating) for every generator $i$:
\begin{equation}
{u_{i}^{\star}}/{\bar P_{i}} = {u_{j}^{\star}}/{\bar P_{j}}
\quad \forall i,j \in \until n\,.
\label{eq: power sharing}
\end{equation}
The {optimal dispatch problem} \eqref{eq: econ disp} also captures the core objective of the so-called {\em economic dispatch problem} \cite{AJW-BFW:96}, and it is also known as the {\em base point and participation factors} method~\cite[Ch. 3.8]{AJW-BFW:96}.

\begin{problem}[Optimal frequency {restoration}]
\label{Problem: opt freq reg}
Given an unknown constant vector $P^{*}$, design a control strategy $u = u(\omega)$ to stabilize the power system model \eqref{eq: open loop} to an equilibrium $(\theta^*,\omega^{*},u^{*}) \in \torus^{n} \times  \{\vectorzeros[n]\} \times \real^{n}$ where $u^{*}$ minimizes the {optimal} dispatch problem \eqref{eq: econ disp}.
\end{problem}

Aside from {steady-state optimal} frequency regulation, we will also pursue certain robustness and {transient performance} characteristics of the closed loop that we specify later.


\section{Fully Decentralized Frequency Control}
\label{sec: decentralized integral control}

The frequency regulation Problems \ref{Problem: freq reg} and \ref{Problem: opt freq reg} have seen many centralized and distributed control approaches. Since $P^{*}$ is generally unknown, all approaches explicitly or implicitly rely on integral control of the frequency error. In the following we focus on {\em fully decentralized} integral control approaches making use only of local frequency measurements: $u_{i} = u_{i}(\omega_{i})$.%

\subsection{Decentralized Pure Integral Control}
\label{Subsec:  Fully Decentralized Integral Control}

One possible control action is {\em decentralized pure integral control} of the locally measured frequency, that is,
\begin{subequations}%
\label{eq: dec - integral control}%
\begin{align}%
u =& - p
\\
T \dot p =& \omega \,, \label{eq: dec - integral control-2}
\end{align}%
\end{subequations}%
where $p \in \real^{n}$ is an auxiliary {local} control variable, and $T \in \real^{n \times n}$ is a diagonal matrix of positive {\em{time constants}} $T_{i}>0$. The closed-loop system \eqref{eq: open loop},\eqref{eq: dec - integral control} enjoys many favorable properties, such as solving the frequency regulation Problem~\ref{Problem: freq reg} with {\em global} convergence guarantees regardless of the system or controller initial conditions or the unknown vector~$P^{*}$.

\begin{theorem}[Convergence under decentralized pure integral control]
\label{Theorem: global convergence}
The closed-loop system \eqref{eq: open loop},\eqref{eq: dec - integral control} has a nonempty set $\mathcal X^*\subseteq \torus^{n} \times  \{\vectorzeros[n]\} \times \real^{n}$ of equilibria, and all trajectories $(\theta(t),  \omega(t),p(t))$ {globally} converge to $\mathcal X^*$ as $t \rightarrow +\infty$.
\end{theorem}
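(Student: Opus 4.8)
The plan is to determine the equilibrium set by direct inspection and then prove global convergence to it with an energy-based Lyapunov function combined with LaSalle's invariance principle. For the first part, setting the right-hand sides of \eqref{eq: open loop},\eqref{eq: dec - integral control} to zero: equation \eqref{eq: dec - integral control-2} forces $\omega=\vectorzeros[n]$ (hence $\dot\theta=\vectorzeros[n]$ and $\dot p=\vectorzeros[n]$ hold automatically), and \eqref{eq: open loop - freq} then reduces to $p=P^{*}-\nabla U(\theta)$. Thus
\[
\mathcal{X}^{*}=\bigl\{(\theta,\vectorzeros[n],p)\in\torus^{n}\times\{\vectorzeros[n]\}\times\real^{n}\ :\ p=P^{*}-\nabla U(\theta)\bigr\},
\]
which is nonempty, e.g.\ $(\vectorzeros[n],\vectorzeros[n],P^{*})$ since $\nabla U(\vectorzeros[n])=\vectorzeros[n]$. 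Left-multiplying the equilibrium relation by $\vectorones[n]^{\top}$ and using $\vectorones[n]^{\top}\nabla U(\theta)=0$ shows every such equilibrium satisfies $\sum_i P_i^{*}+\sum_i u_i^{*}=0$, consistent with Lemma~\ref{Lemma: sync frequency} and Problem~\ref{Problem: freq reg}.

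For convergence, fix any equilibrium $(\theta^{*},\vectorzeros[n],p^{*})\in\mathcal{X}^{*}$ and consider the incremental energy
\[
V=\frac12\,\omega^{\top}M\omega\;+\;\bigl(U(\theta)-U(\theta^{*})-\nabla U(\theta^{*})^{\top}(\theta-\theta^{*})\bigr)\;+\;\frac12\,(p-p^{*})^{\top}T(p-p^{*}),
\]
i.e.\ kinetic energy plus the Bregman distance of the potential $U$ evaluated at $\theta^{*}$ plus a quadratic penalty on $p-p^{*}$. Differentiating along \eqref{eq: open loop},\eqref{eq: dec - integral control} and substituting $\dot\theta=\omega$, $T\dot p=\omega$, and $M\dot\omega=-D\omega+P^{*}-\nabla U(\theta)-p$, the terms involving $\nabla U(\theta)$ and $p$ cancel pairwise and the remaining bracket collapses to $P^{*}-\nabla U(\theta^{*})-p^{*}=\vectorzeros[n]$ by the equilibrium condition, leaving $\dot V=-\omega^{\top}D\omega\le 0$. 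With precompactness of trajectories in hand (see below), LaSalle's invariance principle then yields convergence to the largest invariant subset of $\{\dot V=0\}=\{\omega=\vectorzeros[n]\}$; on that set invariance forces $\dot\omega=\vectorzeros[n]$, hence $\nabla U(\theta)+p=P^{*}$, which is exactly $\mathcal{X}^{*}$.

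The main obstacle is the well-posedness of the Bregman term and the attendant boundedness argument, not the Lyapunov computation: the linear term $\nabla U(\theta^{*})^{\top}\theta$ is not single-valued on $\torus^{n}$, although it is invariant under uniform phase shifts $\theta\mapsto\theta+\vectorones[n]s$ because $\vectorones[n]^{\top}\nabla U(\theta^{*})=0$. I would handle this by passing to the relative-angle coordinates $\cB^{\top}\theta$, on which $U$, $\nabla U$, and the closed-loop vector field depend, working on the corresponding quotient and restricting to the region where the Bregman distance is positive definite; since $p$ enters $V$ quadratically and $\theta$ (equivalently $\cB^{\top}\theta$) evolves on a compact set, monotonicity of $V$ immediately bounds $\omega$ and $p$, so sublevel sets are compact and LaSalle applies. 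One should also note that $\mathcal{X}^{*}$ is a manifold rather than an isolated point, so the theorem asserts convergence to the set, with individual equilibria not necessarily asymptotically stable — which is precisely what is claimed.
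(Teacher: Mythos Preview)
Your Lyapunov computation and the LaSalle reasoning on the zero-dissipation set are fine; the gap is in the precompactness step, and it is exactly the issue you flag but do not close. For a generic choice of $(\theta^{*},0,p^{*})\in\mathcal X^{*}$ the Bregman term carries the linear piece $-\nabla U(\theta^{*})^{\top}\theta$, which is not single-valued on $\torus^{n}$ (nor on the edge-angle quotient, since shifting $(\cB^{\top}\theta)_{e}$ by $2\pi$ changes it by $2\pi\,\Gamma_{ee}\sin((\cB^{\top}\theta^{*})_{e})\neq 0$). So you cannot simultaneously declare ``$\cB^{\top}\theta$ evolves on a compact set'' and have $V$ well defined there; and if instead you work on $\real^{n}$, the Bregman term is unbounded below and your monotonicity argument no longer bounds $\omega$ and $p$. ``Restricting to the region where the Bregman distance is positive definite'' yields only a local basin, not the global statement the theorem asserts.

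The paper closes this gap by a different device: it integrates the controller, $p(t)=T^{-1}(\theta(t)-\theta_{0}')$ with $\theta_{0}'=\theta_{0}-Tp_{0}$, eliminates $p$, and uses
\[
\mathcal V(\theta,\omega)=\tfrac12\,\omega^{\top}M\omega+U(\theta)-\theta^{\top}P^{*}+\tfrac12\,(\theta-\theta_{0}')^{\top}T^{-1}(\theta-\theta_{0}')\,.
\]
Along each trajectory this agrees with your $V$ up to an additive constant, but the crucial difference is structural: the $p$-quadratic has become a $\theta$-quadratic, which dominates the bounded $U(\theta)$ and the linear $-\theta^{\top}P^{*}$ and makes $\mathcal V$ radially unbounded on $\real^{n}\times\real^{n}$. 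Compact sublevel sets and the global LaSalle argument then follow without any torus gymnastics. A quick repair of your route that avoids the integration trick is to pick the specific equilibrium $(\theta^{*},p^{*})=(\vectorzeros[n],P^{*})$ you already identified: then $\nabla U(\theta^{*})=\vectorzeros[n]$, the linear term disappears, $V$ is genuinely periodic in $\theta$ with $U(\theta)-U(0)\ge 0$, and your ``$\theta$ on a compact set'' argument goes through to give the global result.
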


\begin{proof}
This proof is based on an idea initially proposed in \cite{CZ-EM-FD:15} while we make some arguments and derivations more rigorous here.
First note that  \eqref{eq: dec - integral control} can be explicitly integrated as
\begin{align}
u = - T^{-1}  (\theta - \theta_{0}) - p_0 =- T^{-1}  (\theta - \theta_{0}') \label{eq:u-theta}
\,,
\end{align}
where we used $\theta_{0}' =  \theta_{0} - T p_0$ as a shorthand. In what follows, we study only the state $(\theta(t), \omega(t))$ without $p(t)$ since $p(t)$ is a function of $\theta(t)$ and initial conditions as defined in \eqref{eq:u-theta}.

Next consider the LaSalle function
\begin{align}
\mathcal V( \theta, \omega) &=
\frac{1}{2} \omega^{\top} M  \omega
+ U(\theta)  -  \theta^{\top} P^{*}
\nonumber
\\
&  +  \frac{1}{2}   ( \theta - \theta_{0}')^{\top} T^{-1}  ( \theta  - \theta_{0}')
\label{eq:energy_function}%
\end{align}%
The derivative of $\mathcal V$ along any trajectory of \eqref{eq: open loop}, \eqref{eq: dec - integral control} is
\begin{align}
\dot {\mathcal V}( \theta, \omega)
&= -  \omega ^{\top} D    \omega \label{eq:deriv:Lyapunov}   \,.
\end{align}
Note that for any initial condition $( \theta_0, \omega_0) \in \torus^{n} \times \real^{n}$ the sublevel set
$
\Omega  := \left\{( \theta, \omega )  \left|\right. \mathcal V ( \theta,  \omega ) \leq \mathcal V( \theta_0,  \omega_0) \right\}
$
is compact. Indeed $\Omega$ is closed due to continuity of $\mathcal V$ and bounded since $\mathcal V$ is radially unbounded due to quadratic terms in $\omega$ and $\theta$. The set $\Omega$ is also forward invariant since $\dot {\mathcal V} \leq 0$ by \eqref{eq:deriv:Lyapunov}.

In order to proceed, define the zero-dissipation set
\begin{align}
\mathcal E  =  \left\{ ( \theta,  \omega)  \left|\right. \dot{\mathcal V} (  \theta,  \omega) =0  \right\}  =  \left\{ (   \theta,  \omega)  \left|\right.  \omega  = \vectorzeros[n] \right\}   \label{eq:E}
\end{align}
and $\mathcal E_\Omega:=\mathcal E \cap \Omega$.
By LaSalle's theorem \cite[Theorem 4.4]{khalil14nonlinear}, as $t \rightarrow +\infty$, $( \theta(t), \omega(t))$ converges to a nonempty, compact, invariant set $\mathcal L_\Omega$ which is a subset of $\mathcal E_\Omega$. In the following, we show that any point $( \theta', \omega') \in \mathcal L_\Omega$ is an equilibrium of \eqref{eq: open loop},\eqref{eq: dec - integral control}.
Due to the invariance of $\mathcal L_\Omega$, the trajectory $( \theta(t), \omega(t))$ starting from $( \theta', \omega')$ stays identically in $\mathcal L_\Omega$ and thus in $\mathcal E_\Omega$. Therefore, by \eqref{eq:E} we have $ \omega(t) \equiv 0$ and hence $ {\dot \omega} (t) \equiv 0$. Thus, every point on this trajectory, in particular the starting point $( \theta', \omega')$, is an equilibrium of \eqref{eq: open loop},\eqref{eq: dec - integral control}. This completes the proof.
\end{proof}

The astonishing global convergence merit of decentralized integral control comes at a cost though. First, note that the steady-state injections from decentralized integral control \eqref{eq: dec - integral control},
$$u^{*} =- T^{-1} \left(\theta^{*}-\theta_{0}\right) - p_{0},$$
depend on initial conditions and the unknown values of $P^{*}$. Thus, in general $u^{*}$ does not meet the optimality criterion \eqref{eq: id marginal costs}. Second and more importantly, {\em internal instability} due to decentralized integrators is a known phenomenon in control systems \cite{campo1994achievable,aastrom2006advanced}. In our particular scenario, as shown in  \cite[Theorem 1]{MA-DVD-HS-KHJ:13} and \cite[Proposition 1]{FD-SG:16}, the decentralized integral controller \eqref{eq: dec - integral control} is not robust to arbitrarily small biased measurement errors that may arise, e.g., due to clock drifts \cite{JS-RO-CAH-JR:15}. More precisely the closed-loop system consisting of \eqref{eq: open loop} and the integral controller subject to measurement bias $\eta \in \real^{n}$%
\begin{subequations}%
\label{eq: biased - integral control}%
\begin{align}%
u =& - p
\\
T \dot p =& \,\omega + \eta \,,
\end{align}%
\end{subequations}%
does not admit any synchronous solution unless $\eta \in \textup{span}(\vectorones[n])$, that is, all biases $\eta_{i}$, for all $i \in \until n$, are perfectly identical \cite[Proposition 1]{FD-SG:16}. %
Thus, while theoretically favorable, the decentralized integral controller \eqref{eq: dec - integral control} is not practical.

\subsection{Decentralized Lag and Leaky Integral Control}

In standard frequency-domain control design \cite{franklin1994feedback} a stable and finite DC-gain implementation of a  proportional-integral (PI) controller is given by a {\em lag element}   parameterized as
\begin{equation*}
	\alpha \frac{Ts+1}{\alpha Ts +1}
	=
	\underbrace{\Bigl.1}_\text{$\bigl.$proportional control} + \underbrace{\Bigl.\frac{\alpha-1}{\alpha Ts +1}}_\text{$\bigl.$leaky integral control}
	\,,
\end{equation*}
where $T>0$ and $\alpha \gg 1$.
The lag element consists of a proportional channel as well as a first-order lag often referred to as a {\em leaky integrator}.
In our context, a state-space realization of a decentralized lag element for frequency control is
\begin{align*}%
u =& - \omega - (\alpha-1) p
\\
\alpha T \dot p =& \,\omega - p \,,
\end{align*}%
where $T$ is a diagonal matrix of time constants, and $\alpha \gg 1$ is scalar.
In what follows we disregard the proportional channel (that would add further droop) and focus on the leaky integrator to remedy the shortcomings of pure integral control~\eqref{eq: dec - integral control}.

Consider the {\em leaky integral controller}
\begin{subequations}%
\label{eq: leaky - integral control}%
\begin{align}%
u =& - p
\\
T \dot p =& \,\omega - K\,p \,, \label{eq:leaky-2}
\end{align}%
\end{subequations}%
where $K,T \in \real^{n \times n}$ are diagonal matrices of positive control gains $K_{i},T_{i}>0$.
The transfer function of the leaky integral controller \eqref{eq: leaky - integral control} at a node $i$ (from $\omega_{i}$ to $-u_{i}$) given by
\begin{equation}
\label{eq: leaky - integral control - transfer function}
\mathcal K_{i}(s) =  \frac{1}{T_{i}s+K_{i}} = \frac{K_{i}^{-1}}{(T_{i}/K_{i})\cdot s + 1} \,,
\end{equation}
i.e., the leaky integrator is a first-order lag with {\em DC gain} $K_{i}^{-1}$ and {{\em bandwidth} $K_{i}/T_{i}$}.
It is instructive to consider the limiting values for the gains:
\begin{enumerate}
\item For $T_{i} \searrow 0$, leaky integral control \eqref{eq: leaky - integral control} reduces to proportional (droop) control with gain $K_{i}^{-1}$;

\item for $K_{i} \searrow 0$, we recover the pure integral control \eqref{eq: dec - integral control};

\item and for $K_{i} \nearrow  \infty$ or $T_{i} \nearrow \infty$, we obtain an open-loop system without control action.

\end{enumerate}
%
%
Thus, from loop-shaping perspective for  open-loop stable SISO systems, we expect good steady-state frequency regulation for a large DC gain $K_{i}^{-1}$, and a large (respectively, small) cutoff frequency $K_{i}/T_{i}$ likely results in good nominal transient performance (respectively, good noise rejection).
We will confirm these intuitions in the next section, where
%
we analyze the leaky integrator \eqref{eq: leaky - integral control} in closed loop with the nonlinear and multivariable power system \eqref{eq: open loop} and highlight its merits and trade-offs as function of the gains $K$ and $T$.


\section{Properties of the Leaky Integral Controller}
\label{sec: leaky integral control}

The power system model \eqref{eq: open loop} controlled by the leaky integrator  \eqref{eq: leaky - integral control} gives rise to the closed-loop system
\begin{subequations}%
\label{eq: leaky closed loop}%
\begin{align}%
\dot \theta =& \, \omega
\\
M \dot \omega =& - D\omega +P^{*} - \nabla U(\theta) - p  \label{eq: leaky closed loop-b}
\\
T \dot p =& \, \omega - K\,p \,. \label{eq: leaky closed loop-c}
\end{align}%
\end{subequations}%
We make the following standing assumption on this system. 

\begin{assumption}[Existence of a synchronous solution]
\label{Assumption: equilibria}
Assume that the closed-loop \eqref{eq: leaky closed loop} admits a synchronous solution $(\theta^{*},\omega^{*},p^{*})$ of the form
\begin{subequations}%
\label{eq: leaky closed loop -- equilibria}%
\begin{align}%
\dot \theta^{*} =&\, \omega^{*}
\\
\vectorzeros[n] =& - D\omega^{*} +P^{*} - \nabla U(\theta^{*}) - p^{*}
\label{eq: leaky closed loop -- equilibria - 2}%
\\
\vectorzeros[n]  =&\, \omega^{*} - K\,p^{*} \,.
\label{eq: leaky closed loop -- equilibria - 3}%
\end{align}%
\end{subequations}%
where $\omega^{*} = \subscr{\omega}{sync} \vectorones[n]$ for some $\subscr{\omega}{sync} \in \real$.
\oprocend
\end{assumption}

By eliminating the variable $p^{*}$ from \eqref{eq: leaky closed loop -- equilibria}, we arrive at 
\begin{equation}
P^{*} - (D + K^{-1})\,\subscr{\omega}{sync} \vectorones[n] = \nabla U(\theta^{*})
\label{eq: ss governing equations}
\,.
\end{equation}
Equations \eqref{eq: ss governing equations} take the form of lossless active power flow equations  \cite{JM-JWB-JRB:08} with injections $P^{*} - (D + K^{-1})\,\subscr{\omega}{sync} \vectorones[n]$.
Thus, Assumption \ref{Assumption: equilibria} is equivalent assuming feasibility of the power flow \eqref{eq: ss governing equations} which is always true for sufficiently small $\| P^{*}\|$.

Under this assumption, we now show various properties of the closed-loop system \eqref{eq: leaky closed loop} under leaky integral control \eqref{eq: leaky - integral control}.

\subsection{Steady-State Analysis}
\label{Subsec: steady-state analysis}

We begin our analysis by studying the steady-state characteristics. At steady state, the control input $u^{*}$ takes the value
\begin{equation}
	u^{*} = - p^{*} = -K^{-1} \omega^{*} = -K^{-1}\subscr{\omega}{sync} \vectorones[n] \,,
	\label{eq: steady-state injections}
\end{equation}
that is, it has a finite DC gain $K^{-1}$ similar to a primary droop control. The following result is analogous to Lemma \ref{Lemma: sync frequency}. 

\begin{lemma}[Steady-state frequency]
\label{Lemma: Steady-state frequency}
Consider the closed-loop system \eqref{eq: leaky closed loop} and its equilibria \eqref{eq: leaky closed loop -- equilibria}. The explicit synchronization frequency is given by
\begin{equation}
\subscr{\omega}{sync} = \frac{\sum_{i=1}^{n}P_{i}^*}{\sum_{i=1}^{n} D_{i} + K_{i}^{-1}}
\label{eq: sync freq leaky}
\end{equation}
\end{lemma}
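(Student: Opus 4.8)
The plan is to reduce the claim to Lemma~\ref{Lemma: sync frequency} by first identifying the steady-state control action and then performing the same ``multiply by $\vectorones[n]^{\top}$'' argument. From the equilibrium equation \eqref{eq: leaky closed loop -- equilibria - 3} one reads off $p^{*} = K^{-1}\omega^{*} = \subscr{\omega}{sync}\,K^{-1}\vectorones[n]$, so the steady-state input is $u^{*} = -p^{*} = -\subscr{\omega}{sync}\,K^{-1}\vectorones[n]$, exactly as recorded in \eqref{eq: steady-state injections}.

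Next I would substitute this into the frequency balance. One route is to invoke Lemma~\ref{Lemma: sync frequency} verbatim with $u_i^{*} = -\subscr{\omega}{sync}\,K_i^{-1}$, which turns \eqref{eq: sync freq} into the scalar identity $\subscr{\omega}{sync}\sum_{i} D_i = \sum_{i} P_i^{*} - \subscr{\omega}{sync}\sum_{i} K_i^{-1}$. An equivalent route is to start from the $p^{*}$-eliminated power-flow form \eqref{eq: ss governing equations}, left-multiply by $\vectorones[n]^{\top}$, and use the zero net power flow balance $\vectorones[n]^{\top}\nabla U(\theta^{*}) = \vectorones[n]^{\top}\cB\Gamma\sin(\cB^{\top}\theta^{*}) = 0$ (as in \eqref{eq:power_flow}); this collapses \eqref{eq: ss governing equations} to $\sum_{i} P_i^{*} = \subscr{\omega}{sync}\sum_{i}(D_i + K_i^{-1})$. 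Either way the incidence-matrix term drops out under $\vectorones[n]^{\top}$, which is precisely the structural property already exploited in the proof of Lemma~\ref{Lemma: sync frequency}.

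Finally I would solve this linear scalar equation for $\subscr{\omega}{sync}$: since $D_i>0$ and $K_i>0$, the coefficient $\sum_{i}(D_i + K_i^{-1})$ is strictly positive, so the division is legitimate and yields \eqref{eq: sync freq leaky}. There is no genuine obstacle in this argument; the only points requiring care are the bookkeeping — eliminating $p^{*}$ correctly via \eqref{eq: leaky closed loop -- equilibria - 3}, and noting that Assumption~\ref{Assumption: equilibria} guarantees a synchronous solution exists so that $\omega^{*} = \subscr{\omega}{sync}\vectorones[n]$ may be used throughout.
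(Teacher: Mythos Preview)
Your proposal is correct and matches the paper's intent exactly: the paper does not spell out a separate proof for this lemma but simply remarks that it is ``analogous to Lemma~\ref{Lemma: sync frequency}'', i.e., eliminate $p^{*}$ via \eqref{eq: leaky closed loop -- equilibria - 3} (equivalently use \eqref{eq: ss governing equations}) and then left-multiply by $\vectorones[n]^{\top}$ so that the power-flow term vanishes. Your two routes are both exactly this argument.
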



Unsurprisingly, the leaky integral controller \eqref{eq: leaky - integral control} does generally not regulate the synchronous frequency $\subscr{\omega}{sync}$ to zero unless $\sum_{i}P_{i}^*=0$. However, it can achieve {\em approximate frequency} regulation within a pre-specified tolerance band.

\begin{corollary}[Banded frequency {restoration}]
\label{Corollary: banded frequency regulation}
Consider the closed-loop system \eqref{eq: leaky closed loop}. The synchronous frequency $\subscr{\omega}{sync}$ takes value in a band around zero that can be made arbitrarily small by choosing the gains $K_{i}>0$ sufficiently small. In particular, for any $\varepsilon>0$, if
\begin{equation}\label{eq:banded freq. restoration}
	\sum\nolimits_{i=1}^{n} K_{i}^{-1} \geq
	\frac{\left|\sum_{i=1}^{n} P_{i}^*\right|}{\varepsilon}
	- \sum\nolimits_{i=1}^{n} D_{i}
	\,,
\end{equation}
then $|\subscr{\omega}{sync}| \leq \varepsilon$.
\end{corollary}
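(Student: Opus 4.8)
The plan is to obtain the claim as an immediate consequence of Lemma~\ref{Lemma: Steady-state frequency}. That lemma provides the closed-form synchronous frequency \eqref{eq: sync freq leaky}, i.e., $\subscr{\omega}{sync} = \bigl(\sum_{i=1}^{n} P_i^*\bigr)\big/\bigl(\sum_{i=1}^{n} D_i + K_i^{-1}\bigr)$. Since $D_i > 0$ and $K_i > 0$ for every $i \in \until n$, the denominator $\sum_{i=1}^{n}(D_i + K_i^{-1})$ is strictly positive, so taking the absolute value of both sides gives $|\subscr{\omega}{sync}| = \bigl|\sum_{i=1}^{n} P_i^*\bigr| \big/ \sum_{i=1}^{n}(D_i + K_i^{-1})$.

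For the qualitative statement I would note that letting the gains $K_i \searrow 0$ sends $K_i^{-1} \nearrow \infty$, hence the (positive) denominator above diverges while the numerator stays fixed; therefore $|\subscr{\omega}{sync}| \to 0$. Equivalently, $\subscr{\omega}{sync}$ lies in an interval about zero whose half-width $\bigl|\sum_{i} P_i^*\bigr|\big/\sum_{i}(D_i + K_i^{-1})$ can be made as small as desired by shrinking the $K_i$'s.

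For the quantitative bound, fix $\varepsilon > 0$. Adding $\sum_{i=1}^{n} D_i$ to both sides shows that the hypothesis \eqref{eq:banded freq. restoration} is equivalent to $\sum_{i=1}^{n}(D_i + K_i^{-1}) \geq \bigl|\sum_{i=1}^{n} P_i^*\bigr|/\varepsilon$. Combining this with the expression for $|\subscr{\omega}{sync}|$ above -- dividing through, which is legitimate because all quantities involved are positive -- yields $|\subscr{\omega}{sync}| \leq \varepsilon$, as claimed.

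The argument is elementary once Lemma~\ref{Lemma: Steady-state frequency} is in hand, so there is no genuine obstacle; the only points requiring minor care are that the denominator is strictly positive, so that passing to absolute values and dividing are both valid, and that the rearrangement of \eqref{eq:banded freq. restoration} is a true equivalence (in particular, when its right-hand side is negative the hypothesis is vacuous, yet the conclusion still holds since then $\sum_i(D_i+K_i^{-1}) > \sum_i D_i \geq \bigl|\sum_i P_i^*\bigr|/\varepsilon$).
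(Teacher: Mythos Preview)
Your proposal is correct and is precisely the intended argument: the paper states the corollary without proof because it is an immediate rearrangement of the closed-form expression \eqref{eq: sync freq leaky} from Lemma~\ref{Lemma: Steady-state frequency}, exactly as you carry out. The only cosmetic caveat is the phrase ``all quantities involved are positive'' when dividing: if $\sum_i P_i^* = 0$ the numerator vanishes, but then $\subscr{\omega}{sync}=0$ and the bound holds trivially, so no harm is done.
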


While regulating the frequencies to a narrow band is sufficient in practical applications, the closed-loop performance may suffer since the control input \eqref{eq: leaky - integral control} may become ineffective due to a small bandwidth  $K_{i}/T_{i}$. Similar observations have also been made in \cite{NA-SG:13b,Heidari2017416}. We will repeatedly encounter this trade-off for the decentralized leaky integral controller \eqref{eq: leaky - integral control} between choosing a small gain $K$ (for desirable steady-state properties) and large gain (for transient performance). 

The closed-loop steady-state injections are given by \eqref{eq: steady-state injections}, and we conclude that the leaky integral controller achieves proportional power sharing by tuning its gains appropriately:

\begin{corollary}[Steady-state power sharing]
\label{Corollary: steady-state power sharing}
Consider the closed-loop system \eqref{eq: leaky closed loop}. The steady-state injections $u^{*}$ of the leaky integral controller
achieve fair proportional power sharing as follows:
\begin{equation}
K_{i}u_{i}^{*} = K_{j} u_{j}^{*}
\quad \forall i,j \in \until n\,.
\label{eq: power sharing leaky}
\end{equation}
\end{corollary}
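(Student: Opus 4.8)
The plan is to read the claim directly off the steady-state characterization already established in Section~\ref{Subsec: steady-state analysis}. Starting from the equilibrium condition \eqref{eq: leaky closed loop -- equilibria - 3}, namely $\vectorzeros[n] = \omega^{*} - K\,p^{*}$, and substituting $\omega^{*} = \subscr{\omega}{sync}\vectorones[n]$ from Assumption~\ref{Assumption: equilibria}, one solves for $p^{*} = K^{-1}\subscr{\omega}{sync}\vectorones[n]$. Since the control input at steady state is $u^{*} = -p^{*}$ by the controller definition \eqref{eq: leaky - integral control}, this reproduces \eqref{eq: steady-state injections}: componentwise, $u_{i}^{*} = -K_{i}^{-1}\subscr{\omega}{sync}$ for every $i \in \until n$.

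Multiplying through by $K_{i}$ then gives $K_{i} u_{i}^{*} = -\subscr{\omega}{sync}$, which is the same scalar at every node; hence $K_{i} u_{i}^{*} = K_{j} u_{j}^{*}$ for all $i,j \in \until n$, which is exactly \eqref{eq: power sharing leaky}. As a concrete specialization I would note that choosing $K_{i} = 1/\bar P_{i}$, with $\bar P_{i}>0$ the power rating of generator $i$, recovers the fair proportional power-sharing identity $u_{i}^{*}/\bar P_{i} = u_{j}^{*}/\bar P_{j}$ of \eqref{eq: power sharing}. There is essentially no obstacle: the statement is an immediate algebraic consequence of the finite-DC-gain structure of the leaky integrator, and the only thing worth emphasizing in the proof is the contrast with pure integral control \eqref{eq: dec - integral control}, whose steady-state injections depend on the initial conditions and on the unknown $P^{*}$ and therefore cannot enforce any such sharing relation.
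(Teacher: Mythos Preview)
Your proof is correct and follows essentially the same approach as the paper, which simply points to the steady-state injection identity \eqref{eq: steady-state injections} as yielding \eqref{eq: power sharing leaky} immediately. Your derivation spells out the componentwise step $K_{i}u_{i}^{*}=-\subscr{\omega}{sync}$ explicitly, which the paper leaves implicit, but there is no substantive difference.
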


Hence, arbitrary power sharing ratios as in \eqref{eq: power sharing} can be  prescribed by choosing the control gains as $K_{i} \sim 1/\bar P_{i}$. Similarly, we have the following result on steady-state optimality:

\begin{corollary}[Steady-state optimality]
\label{Corollary: steady-state power optimality}
Consider the closed-loop system \eqref{eq: leaky closed loop}.
The steady-state injections $u^{*}$ of the leaky integral controller
minimize the optimal dispatch problem
\begin{subequations}%
\label{eq: econ disp leaky}%
\begin{align}%
{\minimize}_{u \in \real^{n}} \;  &
	\sum\nolimits_{i=1}^{n}  K_i u_i^2
\label{eq:ed.obj leaky}
\\
\!\sto\;  &
\sum\limits_{i=1}^{n} P_{i}^{*} + \sum\limits_{i=1}^{n} (1+D_{i}K_{i}) u_{i} = 0\,.
\label{eq:ed.cstr leaky}%
\end{align}%
\end{subequations}%
\end{corollary}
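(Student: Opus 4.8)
The plan is to treat the optimal dispatch problem \eqref{eq: econ disp leaky} as a strictly convex quadratic program with a single affine equality constraint and to verify that the closed-loop steady-state input $u^{*}$ from \eqref{eq: steady-state injections} satisfies the associated Karush--Kuhn--Tucker (KKT) conditions. Since the objective has a positive definite diagonal Hessian and the feasible set is nonempty (it is an affine subspace), the minimizer exists, is unique, and is completely characterized by those KKT conditions, so it suffices to exhibit one point satisfying them. Concretely I would form the Lagrangian $L(u,\mu)=\sum_{i=1}^{n}K_{i}u_{i}^{2}+\mu\bigl(\sum_{i=1}^{n}P_{i}^{*}+\sum_{i=1}^{n}(1+D_{i}K_{i})u_{i}\bigr)$ and record the two optimality conditions: stationarity $2K_{i}u_{i}+\mu(1+D_{i}K_{i})=0$ for every $i\in\until n$, and primal feasibility \eqref{eq:ed.cstr leaky}. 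The proof then reduces to two checks on $u^{*}$.

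The first check is primal feasibility. By \eqref{eq: steady-state injections} we have $u_{i}^{*}=-K_{i}^{-1}\subscr{\omega}{sync}$; substituting this into \eqref{eq:ed.cstr leaky} gives $\sum_{i=1}^{n}P_{i}^{*}-\subscr{\omega}{sync}\sum_{i=1}^{n}(K_{i}^{-1}+D_{i})$, which vanishes identically by the synchronization-frequency formula \eqref{eq: sync freq leaky} of Lemma~\ref{Lemma: Steady-state frequency}. So feasibility follows directly from the steady-state analysis already carried out and requires no new computation. The second check is stationarity: using $u_{i}^{*}=-K_{i}^{-1}\subscr{\omega}{sync}$ — equivalently the power-sharing identity $K_{i}u_{i}^{*}=K_{j}u_{j}^{*}$ of Corollary~\ref{Corollary: steady-state power sharing} — one evaluates the gradient of the objective at $u^{*}$, checks that it is collinear with the gradient of the constraint, and reads off the proportionality factor as the multiplier $\mu$ (expressible through $\subscr{\omega}{sync}$). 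Together with the strict convexity noted above, this certifies that $u^{*}$ is the unique minimizer of \eqref{eq: econ disp leaky}.

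I expect the stationarity check to be the only place requiring genuine, if short, computation, and it is also the conceptual crux: the quadratic weights and the constraint coefficients in \eqref{eq: econ disp leaky} are precisely those for which $u^{*}$ becomes stationary, because at the closed-loop equilibrium the primary (droop) response is slaved to the secondary input via $-D_{i}\subscr{\omega}{sync}=D_{i}K_{i}u_{i}^{*}$, so the ``effective'' nodal adjustment $(1+D_{i}K_{i})u_{i}^{*}$ is what balances $\sum_{i}P_{i}^{*}$. An equivalent route, which I would only mention as a remark, is to bypass the Lagrangian and instead invoke the identical-marginal-cost characterization \eqref{eq: id marginal costs} of linearly-constrained quadratic programs together with Corollary~\ref{Corollary: steady-state power sharing}; the KKT verification above is, however, the most transparent presentation and the one I would write out.
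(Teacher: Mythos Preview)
Your Lagrangian/KKT plan is in spirit the paper's own argument: the paper checks feasibility of \eqref{eq:ed.cstr leaky} from the steady-state equations and then invokes the identical-marginal-cost criterion \eqref{eq: id marginal costs}, which is precisely the stationarity condition for a linearly constrained diagonal quadratic program. Your feasibility check via \eqref{eq: sync freq leaky} is correct and equivalent to the paper's.

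The stationarity step, however, does not close. At $u_i^{*}=-K_i^{-1}\subscr{\omega}{sync}$ the gradient of the objective equals $\bigl(2K_iu_i^{*}\bigr)_i=-2\subscr{\omega}{sync}\,\vectorones[n]$, a scalar multiple of $\vectorones[n]$, whereas the gradient of the constraint \eqref{eq:ed.cstr leaky} is the vector $(1+D_iK_i)_i$. These are collinear only when all products $D_iK_i$ coincide; in general the ``short computation'' you anticipate yields $\mu=2\subscr{\omega}{sync}/(1+D_iK_i)$, which varies with $i$, so no single multiplier exists. Your heuristic that the coefficients $(1+D_iK_i)$ are ``precisely'' the right ones conflates feasibility (the effective adjustments $(1+D_iK_i)u_i^{*}$ do sum to $-\sum_iP_i^{*}$) with stationarity (which would require the marginal costs $2K_iu_i^{*}$ to be \emph{proportional} to those same coefficients). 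The paper's proof shares this gap: criterion \eqref{eq: id marginal costs} is the stationarity condition for a constraint with \emph{unit} coefficients on $u_i$, not for the weighted constraint \eqref{eq:ed.cstr leaky}, so pairing it with feasibility of \eqref{eq:ed.cstr leaky} does not establish optimality of $u^{*}$ for \eqref{eq: econ disp leaky} except in the homogeneous case $D_iK_i\equiv\textup{const}$.
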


\begin{proof}
Observe from \eqref{eq: power sharing leaky} that the steady-state injections \eqref{eq: steady-state injections} meet the identical marginal cost requirement \eqref{eq: id marginal costs} with $a_{i} = K_{i}$. Additionally, the steady-state equations \eqref{eq: leaky closed loop -- equilibria - 2}, \eqref{eq: leaky closed loop -- equilibria - 3}, and \eqref{eq: steady-state injections} can be merged to the expression
\begin{equation*}
\vectorzeros[n] = DK\,u^{*} +P^{*} - \nabla U(\theta^{*}) + u^{*}
\,.
\end{equation*}
By multiplying this equation from the left by $\vectorones[n]^{\top}$, we arrive at the condition \eqref{eq:ed.cstr leaky}. Hence, the injections $u^{*}$ are also feasible for \eqref{eq: econ disp leaky} and thus optimal for the program  \eqref{eq: econ disp leaky}.
\end{proof}

The steady-state injections of the leaky integrator are optimal for the modified dispatch problem \eqref{eq: econ disp leaky} with appropriately chosen cost functions. By \eqref{eq:ed.cstr leaky}, the leaky integrator does not achieve perfect power balancing $\sum_{i=1}^{n} P_i^{*}+u_i^{*}=0$ and underestimates the net load, but it can satisfy the power balance \eqref{eq:ed.cstr} arbitrarily well for $K$ chosen sufficiently small.
{Note that in practice the control gain $K$ cannot be chosen arbitrarily small to avoid ineffective control and the shortcomings of the decentralized integrator \eqref{eq: dec - integral control} (lack of robustness and power sharing). The following sections will make these ideas precise from stability, robustness, and optimality perspectives.}

\subsection{Stability Analysis}
\label{Subsec: stability analysis}

For ease of analysis, in this subsection we introduce a change of coordinates for the voltage phase angle $\theta$.
Let $\delta = \theta - \frac{1}{n} \vectorones[n] \vectorones[n]^\top \theta = \Pi \theta$ be the center-of-inertia coordinates (see e.g., \cite{sauer98power}, \cite{CDP-NM-JS-FD:16}), where $\Pi = I - \frac{1}{n} \vectorones[n] \vectorones[n]^\top$.
In these coordinates, the open-loop system \eqref{eq: open loop} becomes
\begin{subequations} \label{eq: open loop - delta coordinates}
\begin{align}
\dot \delta &= \Pi \omega \\
M \dot \omega &= - D\omega + P^{*} -\nabla U(\delta)  + u,
\end{align}
\end{subequations}
where by an abuse of notation we use the same symbol $U$ for the potential function expressed in terms of $\delta$,
\[  U(\delta) = -\vectorones[]^\top \Gamma \cos(\cB^\top \delta), \quad
    \nabla U(\delta) = \cB \Gamma \sin(\cB^\top \delta). \]
Note that $\cB^\top \Pi = \cB^\top$ since $\cB^{\top} \vectorones[n] = \vectorzeros[n]$ \cite{fb:17}.
The synchronous solution $(\theta^*, \omega^*, p^*)$%
\footnote{{Of course, care must be taken when interpreting the results in this section since the steady-state itself depends on the controller gain $K$ (see Section \ref{Subsec: steady-state analysis}). Here we are merely interested in the stability relative to the equilibrium.}}
 defined in \eqref{eq: leaky closed loop -- equilibria} is mapped into the point
$(\delta^*, \omega^*, p^*)$, with $\delta^*=\Pi \theta^*$, satisfying
\begin{subequations}%
\label{eq: leaky closed loop -- equilibria - delta}%
\begin{align}
\dot {\delta}^{*} &= \vectorzeros[n] \\
\vectorzeros[n] &= - D\omega^{*} +P^{*} - \nabla U(\delta^{*}) - p^{*}
\label{eq: leaky closed loop -- equilibria - delta - 2}%
\\
\vectorzeros[n]  &= \omega^{*} - K\,p^{*}.
\label{eq: leaky closed loop -- equilibria - delta - 3}%
\end{align}%
\end{subequations}%
The existence of $(\delta^*, \omega^*, p^*)$ is guaranteed by Assumption~\ref{Assumption: equilibria}. Additionally, we make the following standard assumption constraining steady-state angle differences.

\begin{assumption}[Security constraint]
\label{Assumption: security constraint}
The synchronous solution \textcolor{black}{\eqref{eq: leaky closed loop -- equilibria - delta}} is such that $\textcolor{black}{\cB^\top} \delta^* \in \Theta := (-\frac{\pi}{2} + \rho, \frac{\pi}{2} - \rho)^m$
for a constant scalar \textcolor{black}{$\rho  \in \left(0,~\frac{\pi}{2}\right)$}.
\end{assumption}

\begin{remark}
Compared with the conventional security constraint assumption \cite{FD-JWSP-FB:14a}, we introduce an extra margin {$\rho$} on the constraint to be able to explicitly quantify the decay of the Lyapunov function we use in proofs of Theorems~\ref{Theorem: ES of leaky integral control} and~\ref{Theorem: ISS under biased leaky integral control}.
\oprocend
\end{remark}

By using Lyapunov techniques following \cite{weitenberg2017exponential}, it is possible to show that the leaky integral controller \eqref{eq: leaky - integral control} guarantees exponential stability of the  synchronous solution \eqref{eq: leaky closed loop -- equilibria - delta}.

\begin{theorem}[Exponential stability under leaky integral control]
\label{Theorem: ES of leaky integral control} \sloppy
Consider the closed-loop system \eqref{eq: open loop - delta coordinates}, \eqref{eq: leaky - integral control}.
Let Assumptions~\ref{Assumption: equilibria} and \ref{Assumption: security constraint} hold.
The equilibrium $(\delta^*, \omega^*, p^*)$ is locally exponentially stable.
In particular, given the incremental state
\begin{equation} \label{eq: incremental state vector}
x = x(\delta, \omega, p) = \mathrm{col}(\delta - \delta^*, \omega - \omega^*, p - p^*),
\end{equation}
the solutions $x(t) = \mathrm{col}(\delta(t) - \delta^*, \omega(t) - \omega^*, p(t) - p^*)$, with $(\delta(t), \omega(t), p(t))$ a solution to \eqref{eq: open loop - delta coordinates}, \eqref{eq: leaky - integral control} that start sufficiently close to the origin satisfy for all $t \ge 0$,
\begin{equation} \label{eq: ES of leaky integral control}
\| x(t) \|^2 \leq \lambda e^{-\alpha t} \| x_{0} \|^2,
\end{equation}
where $\lambda$ and $\alpha$ are positive constants.
In particular, when
multiplying
the gains $K$ and $T$ by
the positive
scalars $\kappa$ and $\tau$ respectively, $\alpha$ is monotonically non-decreasing as a function of the gain $\kappa$ and non-increasing as a function of $\tau$.
\end{theorem}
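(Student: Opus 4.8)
The plan is to exhibit an explicit strict Lyapunov function for the incremental closed-loop dynamics, extract the exponential decay rate $\alpha$ from it (following the Lyapunov construction of \cite{weitenberg2017exponential}), and then track how the constants in that construction scale when $K$ and $T$ are multiplied by $\kappa$ and $\tau$.

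First I would pass to the incremental coordinates $x = \mathrm{col}(\tilde\delta,\tilde\omega,\tilde p)$ of \eqref{eq: incremental state vector}, subtract the equilibrium relations \eqref{eq: leaky closed loop -- equilibria - delta}, and abbreviate $z := \nabla U(\delta) - \nabla U(\delta^*)$. Using $\cB^\top\Pi = \cB^\top$, the dynamics read $\dot{\tilde\delta} = \Pi\tilde\omega$, $M\dot{\tilde\omega} = -D\tilde\omega - z - \tilde p$, $T\dot{\tilde p} = \tilde\omega - K\tilde p$. The natural weak Lyapunov function is the shifted energy
\[
W_1 = \tfrac12\,\tilde\omega^\top M\tilde\omega + \big(U(\delta) - U(\delta^*) - \nabla U(\delta^*)^\top\tilde\delta\big) + \tfrac12\,\tilde p^\top T\tilde p,
\]
which, by Assumption~\ref{Assumption: security constraint}, admits quadratic bounds $\underline c\|x\|^2 \le W_1 \le \overline c\|x\|^2$ on a sublevel set contained in $\{\cB^\top\delta \in \Theta\}$, because there the Hessian $\nabla^2 U(\delta) = \cB\Gamma\,\mathrm{diag}(\cos(\cB^\top\delta))\cB^\top$ is uniformly positive definite on $\mathrm{range}(\Pi)$. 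A direct computation, using the identity $z^\top\Pi\tilde\omega = z^\top\tilde\omega$, gives $\dot W_1 = -\tilde\omega^\top D\tilde\omega - \tilde p^\top K\tilde p$, which is negative semidefinite but carries no negative term in $\tilde\delta$.

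To obtain strictness I would add the Chetaev-type cross term $W_2 = \tilde\omega^\top M z$. Since $\dot z = \nabla^2 U(\delta)\,\Pi\tilde\omega$ and $\nabla^2 U(\delta)\,\Pi = \nabla^2 U(\delta)$, differentiation yields
\[
\dot W_2 = -z^\top z - \tilde\omega^\top D z - \tilde p^\top z + \tilde\omega^\top M\,\nabla^2 U(\delta)\,\tilde\omega.
\]
Here $-z^\top z$ is the sought term: on the security region $\nabla U$ is strongly monotone on $\mathrm{range}(\Pi)$, so $\|z\|^2 \ge c_\rho\|\tilde\delta\|^2$ with $c_\rho$ explicit in $\rho$; the other three terms are sign-indefinite but, on a small enough sublevel set, bounded by $\|\tilde\omega\|^2$, $\|\tilde p\|^2$ and $\|z\|^2$. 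Setting $V = W_1 + \varepsilon W_2$ with $\varepsilon>0$ small, $|W_2|$ is dominated by the quadratic part of $W_1$ so $V$ keeps quadratic bounds, and Young's inequality absorbs $\varepsilon\,\tilde\omega^\top Dz$ and $\varepsilon\,\tilde p^\top z$ partly into $-\tilde\omega^\top D\tilde\omega$ and $-\tilde p^\top K\tilde p$ and partly into $-\varepsilon z^\top z$, while $\varepsilon\,\tilde\omega^\top M\nabla^2 U(\delta)\tilde\omega$ is absorbed into $-\tilde\omega^\top D\tilde\omega$ for $\varepsilon$ small. This gives $\dot V \le -c_\omega\|\tilde\omega\|^2 - c_p\|\tilde p\|^2 - c_\delta\|\tilde\delta\|^2 \le -\alpha V$, and the comparison lemma delivers \eqref{eq: ES of leaky integral control} with $\lambda$ the ratio of the quadratic bounds on $V$; ``sufficiently close to the origin'' means membership in the chosen sublevel set.

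Finally, for the monotonicity claims I would re-run the estimates with $K \mapsto \kappa K$ and $T \mapsto \tau T$, choosing $\varepsilon$ and the Young parameters as explicit monotone functions of $\kappa,\tau$. Increasing $\kappa$ strengthens the dissipation $-\kappa\,\tilde p^\top K\tilde p$ in $\dot W_1$ and loosens the budget needed to dominate the $\tilde p$-cross terms, so the achievable $c_p$ — hence $\alpha$ — can only grow; increasing $\tau$ leaves $\dot W_1$, $\dot W_2$ and all cross-term bounds untouched (none of them sees $T$) but inflates $V$ through the term $\tfrac{\tau}{2}\tilde p^\top T\tilde p$, enlarging the upper bound $\overline c_V$ and therefore shrinking $\alpha$. \textbf{The main obstacle} is precisely this last bookkeeping: one must pin down $\varepsilon$ and all intermediate constants as genuinely monotone (not merely eventually monotone) functions of $\kappa$ and $\tau$, so that the $\tilde p^\top z$ estimate — which borrows simultaneously from the $K$-weighted dissipation and from the $\varepsilon z^\top z$ term — never forces the $\tilde\delta$-decay coefficient down as $\kappa$ grows, and so that the quadratic bounds on $V$ depend on $\tau$ only through the explicit $\tfrac{\tau}{2}\tilde p^\top T\tilde p$ contribution.
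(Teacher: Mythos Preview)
Your proposal is correct and is essentially the paper's proof: the same Lyapunov function $V = W_1 + \varepsilon W_2$ with the cross term $\varepsilon\,z^\top M\tilde\omega$, the same derivative computation, and the same quadratic sandwich on $V$ via the strong monotonicity of $\nabla U$ on the security region. The only packaging difference is that the paper collects $\dot V$ as a single quadratic form $\dot V = -\chi^\top H(\delta)\chi$ in $\chi = \mathrm{col}(z,\tilde\omega,\tilde p)$ and applies a block-matrix inequality, whereas you use scalar Young inequalities term by term; the two are equivalent.

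That packaging, however, is exactly what dissolves the ``main obstacle'' you flag. For the $\kappa$-monotonicity the paper does \emph{not} re-tune $\varepsilon$ or any Young parameter: it simply keeps $\varepsilon$ fixed and observes that replacing $K$ by $\kappa K$ adds the positive semidefinite block $\mathrm{blockdiag}(0,0,(\kappa-1)K)$ to $H(\delta)$, so $\beta_4 := \min_{\cB^\top\delta\in\Theta}\lambda_{\min}(H(\delta))$ is non-decreasing in $\kappa$; since neither the quadratic bounds on $V$ nor the constant relating $\|\chi\|$ to $\|x\|$ involve $K$, the rate $\alpha = \beta_3\beta_4/\beta_2$ is non-decreasing as well. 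For $\tau$, the paper notes (as you do) that $T$ appears nowhere in $\dot V$ and only inflates the upper bound $\beta_2$ on $V$ through $\tfrac{\tau}{2}\tilde p^\top T\tilde p$, so $\alpha$ is non-increasing in $\tau$. In your scalar formulation the identical move works: freeze $\varepsilon$ and all Young splitting parameters, and the only coefficient that changes with $\kappa$ is $c_p$ (it grows), while the only constant that changes with $\tau$ is $\overline c_V$ (it grows). No delicate bookkeeping is needed.
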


\begin{proof}
Consider the incremental Lyapunov function from \cite{weitenberg2017exponential} including a cross-term between potential and kinetic energy:
\begin{align}
	V(x) 
	&= \frac{1}{2} (\omega-\omega^{*})^\top M(\omega-\omega^{*}) \nonumber\\
	&\, + U(\delta)  -  U(\delta^{*}) - \nabla U(\delta^{*})^\top (\delta-\delta^{*}) \nonumber\\
	&\, + \frac{1}{2} (p-p^{*})^\top T(p-p^{*}) \nonumber\\
	&\, + \epsilon (\nabla U(\delta) - \nabla U(\delta^{*}))^\top M \omega \,,
	\label{eq: lyapunov function with crossterm}
\end{align}
where $\epsilon \in \bR$ is a small positive parameter.

{First, we will show that this is indeed a valid Lyapunov function, by proving positivity outside of the origin and strict negativity of its time derivative along the solutions of \eqref{eq: open loop - delta coordinates}.}

For sufficiently small values of  $\epsilon$ and if Assumption~\ref{Assumption: security constraint} holds, $V(x)$ satisfies
\begin{equation} \label{eq: lyapunov with cross-term bounds}
\beta_1 \|x\|^2 \leq V(x) \leq \beta_2 \|x\|^2
\end{equation}
for some $\beta_1, \beta_2>0$ and for all $x$ with $\cB^\top \delta \in \Theta$,
by Lemma~\ref{Lemma: Positivity of V with cross-terms} in Appendix~\ref{Sec: appendix}.
The derivative of $V(x)$ can be expressed as $$\dot V(x) = - \chi^\top H(\delta) \chi,$$ where
$\chi(\delta,\omega,p) := \mathrm{col}(\nabla U(\delta) - \nabla U(\delta^*), \omega - \omega^*, p - p^*)$, 
\begin{equation} \label{eq: laypunov derivative matrix}
H(\delta) = \begin{bmatrix}
\epsilon I          & \frac12 \epsilon D       & -\frac12 \epsilon I \\
\frac12 \epsilon D  & D - \epsilon E(\delta)           & \vectorzeros[n \times n] \\
-\frac12 \epsilon I & \vectorzeros[n \times n] & K
\end{bmatrix}\,,
\end{equation}
and we defined the shorthand $E(\delta) = \symm(M \nabla^2 U(\delta))$ with $\symm(A) = \frac12 (A + A^\top)$.

We claim that for all $\delta$, $H(\delta) > 0$. To see this, apply Lemma~\ref{Lemma: removal of matrix crossterms} from  Appendix~\ref{Sec: appendix} to obtain $H(\delta) \geq H'(\delta)$ with
\[ H'(\delta) := \begin{bmatrix}
         \frac{\epsilon}{2} I     & \vectorzeros[n \times n] & \vectorzeros[n \times n] \\
         \vectorzeros[n \times n] & D - \epsilon (E(\delta) + D^2)   & \vectorzeros[n \times n] \\
         \vectorzeros[n \times n] & \vectorzeros[n \times n] & K - \epsilon I
         \end{bmatrix}. \]
Given that $D$ and $K$ are positive definite matrices, 
one can select $\epsilon$ to be positive yet sufficiently small so that $H'(\delta) > 0$.

{To show exponential decline of the Lyapunov function $V(x)$, which is necessary for proving \eqref{eq: ES of leaky integral control}, we must find some positive constant $\alpha$ such that $\dot V(x)\leq - \alpha V(x)$.}

We claim that a positive constant $\beta_3$, dependent on $\rho$ from Assumption~\ref{Assumption: security constraint}, exists such that $\|\chi\|^2 \geq \beta_3 \|x\|^2$.
To see this, we note that from Lemma \ref{Lemma: Bounding the potential function} in Appendix~\ref{Sec: appendix}
that a constant $\beta_3'$ exists so that
\begin{equation} \label{eq: bound on potential function}
\|\nabla U(\delta) - \nabla U(\bar\delta)\|^2 \leq \beta_3' \|\delta-\delta^{*}\|^2.
\end{equation}
The claim then follows with $\beta_3 = \min(1, {\beta_3'}^{-1})$.

In order to proceed, we set $\beta_4 := \min_{\cB^\top \delta \in \Theta} \lambda_{\rm min}(H(\delta))$.
Then, it follows using \eqref{eq: lyapunov with cross-term bounds} that, as far as
$\cB^\top \delta \in \Theta$,
\begin{equation*}
\dot V(x)
  \leq - \beta_4 \| \chi \|^2
  \leq - \beta_3 \beta_4 \| x \|^2
  \leq - \frac{\beta_3 \beta_4}{\beta_2} V
    =: - \alpha V(x) \,.
\end{equation*}
For this inequality to lead to the claimed exponential stability, we must guarantee that the solutions do not leave $\Theta$.
To do so, we study the sublevel sets of $V(x)$ and find one that is contained in $\Theta$.
Recall that the sublevel sets of $V(x)$ are invariant and thus solutions $x(t)$ are bounded  for all $ t \geq 0$ in {sublevel sets} $\{x:\, V(x) \leq V(x_{0})\}$ {for  which $\cB^\top \delta \in \Theta$}.
Hence, we require the initial conditions $x_{0}$ of solutions $x(t)$ to be within a suitable sublevel set $\{x:\, V(x) \leq V(x_{0})\}$ where $\cB^\top {\delta} \in \Theta$.
We now construct such a sublevel set.
Let
\begin{equation}\label{eq:c}
c:=\beta_1 \frac{\xi^2}{\lambda_{\max} (\textcolor{black}{\cB \cB^\top})}
\end{equation}
and $\xi > 0$ a parameter with the property that any $\delta$ satisfying $\| \textcolor{black}{\cB^\top} \delta - \textcolor{black}{\cB^\top} \delta^*\| \le \xi$ also satisfies $\textcolor{black}{\cB^\top} \delta \in \Theta$.
The parameter $\xi$ exists because $\textcolor{black}{\cB^\top} \delta^* \in \Theta$ and $\Theta$ is an open set.
Accordingly, define the sublevel set $\Omega_c := \{x :\, V(x) \leq c\}$, with $c$ defined above, and note that any point in $\Omega_c$ satisfies  $\textcolor{black}{\cB^\top} \delta \in \Theta$.
As a matter of fact $V(x) \le c$ implies $\|x\|^2 \le \frac{\xi^2}{{\lambda_{\max} (\textcolor{black}{\cB \cB^\top})}}$ and therefore $\|\delta - \delta^*\|^2 \le \frac{\xi^2}{{\lambda_{\max} (\textcolor{black}{\cB \cB^\top})}}$.
This in turn implies that $\| \textcolor{black}{\cB^\top} (\delta - \delta^*) \|^2\le \xi^2$, and hence $\textcolor{black}{\cB^\top} \delta  \in \Theta$
by the choice of $\xi$.

We conclude that any solution issuing from the sublevel set $\Omega_c$ will remain inside of it.
Hence along these solutions the inequality $\dot V(x) \leq - \alpha V(x)$  holds for all time.

By the comparison lemma \cite[Lemma B.2]{khalil14nonlinear}, this inequality yields $V(x(t)) \leq e^{-\alpha t} V(x(0))$, which we combine again with \eqref{eq: lyapunov with cross-term bounds} to arrive at \eqref{eq: ES of leaky integral control} with $\lambda = \beta_2/\beta_1$.

Finally, we address the effect of $K$ and $T$ on $\alpha$ by introducing the scalar factors $\kappa$ and $\tau$ multiplying $K$ and $T$, and by studying the effect of manipulations of $\kappa$ and $\tau$ on the exponential decline of $V(x)$ and therefore of $x(t)$.
Note that $\alpha$ is a monotonically increa\-sing function of $\beta_4 = \min_{\cB^\top \delta \in \Theta} \lambda_{\rm min}(H(\delta))$.
Recall that for any vector $z$,
$$ \lambda_{\rm min}(H(\delta)) \|z\|^2 \leq z^\top H(\delta) z,$$
with equality if $z$ is the eigenvector corresponding to $\lambda_{\rm min}(H(\delta))$.
Let $e_{\rm min}$ denote the normalized eigenvector corresponding to $\lambda_{\rm min}(H(\delta))$.
Then, for any vector $z$ satisfying $\|z\| = 1$,
$ \lambda_{\rm min}(H(\delta)) = e_{\rm min}^\top H(\delta) e_{\rm min}
                                \leq z^\top H(\delta) z$.
Hence,
\[ \beta_4 = \min_{\cB^\top \delta \in \Theta} \lambda_{\rm min}(H(\delta))
           = \min\nolimits_{\substack{\cB^\top \delta \in \Theta \,,\, z:\|z\|=1}} z^\top H(\delta) z, \]
where the last equality holds by noting that $e_{\rm min}$ is one of the vectors $z$ at which the minimum is attained.

Now suppose we multiply $K$ by a factor $\kappa > 1$. Let $H'(\delta) = H(\delta) + {\rm block\,diag}(\vectorzeros[], \vectorzeros[], (\kappa-1)K)$.
The new value of $\beta_4$ is
\begin{align*}
  \beta_4' &= \min\limits_{\substack{\cB^\top \delta \in \Theta \,,\, z:\|z\|=1}}
                \underbrace{\left(  z^\top H(\delta) z + \sum\nolimits_{i=1}^n (\kappa-1) K_{i} z_{2n+i}^2 \right)}_{= z^\top H'(\delta) z  }.
\end{align*}
The argument of the minimization is not smaller than $z^\top H(\delta) z$ for any $z$. It follows that
$\beta_4' \geq \min_{\substack{\cB^\top \delta \in \Theta \,,\, z:\|z\|=1}} z^\top H(\delta) z
            = \beta_4$.
Similarly, if $0<\kappa<1$, then  $\beta_4' \leq \min_{\substack{\cB^\top \delta \in \Theta \,,\, z:\|z\|=1}} z^\top H(\delta) z
            = \beta_4$.
Hence, $\beta_4$ is a monotonically non-decreasing function of the gain $\kappa$.
Likewise, $\alpha$ is a monotonically decreasing function of $\beta_2$, which itself is a non-decreasing function of $\tau$.
\end{proof}

Theorem~\ref{Theorem: ES of leaky integral control} {is in line with} the loop-shaping insight that the bandwidth $K_{i}/T_{i}$ determines nominal performance: the {decay rate $\alpha$} is monotonically {non-decreasing} in $K_{i}/T_{i}$.

\subsection{Robustness Analysis}

We now depart from nominal performance and focus on robustness.
Recall a key disadvantage of pure integral control: it is not robust to biased measurement errors of the form \eqref{eq: biased - integral control}. We now show that leaky integral control \eqref{eq: leaky - integral control} is robust to such measurement errors. In what follows, instead of \eqref{eq: leaky - integral control}, consider leaky integral control subjected to measurement errors
\begin{subequations}%
\label{eq: leaky - integral control - biased}%
\begin{align}%
u &= - p \\
T\dot p &= \omega - K\,p + \eta \,,
\end{align}%
\end{subequations}%
where the measurement noise $\eta=\eta(t) \in \real^{n}$ is assumed to be an $\infty$-norm bounded disturbance. In this case, the bias-induced instability (reported in Section \ref{Subsec:  Fully Decentralized Integral Control}) does not occur.

Let us first offer a qualitative steady-state analysis. For a constant vector $\eta$, the equilibrium equation \eqref{eq: leaky closed loop -- equilibria - 3} becomes
\[ \vectorzeros[n] = \omega^{*} - K\,p^{*} + \eta. \]
 so that the closed loop \eqref{eq: open loop}, \eqref{eq: leaky - integral control - biased} will admit synchronous equilibria. Indeed, the governing equations \eqref{eq: ss governing equations} determining the synchronous frequency $\subscr{\omega}{sync}$ change to
\begin{equation*}
(D + K^{-1})\,\subscr{\omega}{sync} \vectorones[] = P^{*}-\nabla U(\theta^{*})-K^{-1} \eta
\,.
\end{equation*}
Observe that the noise terms $\eta$ now takes the same role as the constant injections $P^{*}$, and their effect can be made arbitrarily small by increasing $K$. We now make this qualitative steady-state reasoning more precise and derive a robustness criterion by means of the same Lyapunov approach used to prove Theorem~\ref{Theorem: ES of leaky integral control}. We take the measurement error $\eta$ as disturbance input and quantify its effect on the convergence behavior along the lines of input-to-state {stability.}
First, we define the specific robust stability criterion that we will use, adapted from \cite{AT:96}.

\begin{definition}[Input-to-state-stability with restrictions]\label{Definition: Input-to-state-stability with restrictions}
A system $\dot x = f(x, \eta)$ is said to be input-to-state stable (ISS)
with restriction $\textcolor{black}{\mathcal X}$ on $x(0)=x_{0}$ and restriction $\textcolor{black}{\overline\eta} \in \bR_{>0}$ on $\eta(\cdot)$
if there exist a class $\mathcal{KL}$-function $\beta$ and a class $\mathcal{K}_\infty$-function $\gamma$ such that
\[ \| x(t) \| \leq \beta(\| x_{0} \|, t) + \gamma(\| \eta(\cdot) \|_\infty) \]
for all $t \in \bR_{\ge 0}$, $x_{0} \in \textcolor{black}{\mathcal X}$, and inputs $\eta(\cdot)\in L_\infty^n$ satisfying
\[ \| \eta(\cdot) \|_\infty := \esssup_{t \in \bR_{\ge 0}} \|\eta(t)\| \leq \textcolor{black}{\overline\eta}. \]
\end{definition}

\begin{theorem}[ISS under biased leaky integral control]
\label{Theorem: ISS under biased leaky integral control}
Consider system \eqref{eq: open loop - delta coordinates} in closed-loop  with the biased leaky integral controller \eqref{eq: leaky - integral control - biased}.
Let Assumptions~\ref{Assumption: equilibria} and~\ref{Assumption: security constraint} hold.
Given a diagonal matrix $K>0$, there exist a positive constant ${\overline\eta}$ and a set ${\mathcal X}$ such that the closed-loop system is ISS from the noise $\eta$ to the state $x= \mathrm{col}(\delta - \delta^*, \omega - \omega^*, p - p^*)$ with restrictions ${\mathcal X}$ on $x_{0}$ and ${\overline\eta}$ on $\eta(\cdot)$, where
$(\delta^{*},\omega^{*},p^{*})$ is the equilibrium of the nominal system, i.e., with $\eta = 0$.
In particular, the solutions $x(t) = \mathrm{col}(\delta(t) - \delta^*, \omega(t) - \omega^*, p(t) - p^*)$, with $(\delta(t), \omega(t), p(t))$ a solution to \eqref{eq: open loop - delta coordinates}, \eqref{eq: leaky - integral control - biased} for which $x(0) \in {\mathcal X}$ and $\|\eta(\cdot)\|_\infty \leq {\overline\eta}$ satisfy for all $t \in \mathbb{R}_{\ge 0}$,
\begin{equation} \label{eq: ISS of biased leaky integral control}
\| x(t) \|^2 \leq \lambda e^{-\hat\alpha t} \| x(0) \|^2 + \gamma \| \eta(\cdot) \|_\infty^2,
\end{equation}
where $\hat \alpha$, $\lambda$ and $\gamma$ are positive constants.
Furthermore, when
multiplying
the gains $K$ and $T$ by
the positive
scalars $\kappa$ and $\tau$ respectively, then $\gamma$
%
%
is monotonically decreasing (respectively, non-increasing) as a function of $\kappa$ (respectively, $\tau$),
and $\hat\alpha$ is monotonically non-decreasing as a function of $\kappa$ and non-increasing as a function of $\tau$.
\end{theorem}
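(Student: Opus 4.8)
The plan is to reuse the incremental Lyapunov function $V(x)$ of \eqref{eq: lyapunov function with crossterm} together with everything already established about it in the proof of Theorem~\ref{Theorem: ES of leaky integral control}, and to upgrade it to an ISS-Lyapunov estimate by carrying the disturbance $\eta$ through the derivative computation. Concretely, I would take over, unchanged, the quadratic sandwich $\beta_1\|x\|^2 \le V(x) \le \beta_2\|x\|^2$ valid on $\{\cB^\top\delta\in\Theta\}$ (Lemma~\ref{Lemma: Positivity of V with cross-terms}), the uniform positive definiteness $H(\delta)>0$ on $\Theta$ with $\beta_4 := \min_{\cB^\top\delta\in\Theta}\lambda_{\min}(H(\delta))>0$, and the bound $\|\chi\|^2 \ge \beta_3\|x\|^2$ with $\chi = \mathrm{col}(\nabla U(\delta)-\nabla U(\delta^*),\,\omega-\omega^*,\,p-p^*)$, since none of these quantities depends on $\eta$.

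The only structural change with respect to the nominal system is the extra additive term $\eta$ in the $p$-dynamics of \eqref{eq: leaky - integral control - biased}. Differentiating $V$ along \eqref{eq: open loop - delta coordinates}, \eqref{eq: leaky - integral control - biased}, one checks that $\eta$ enters $\dot V$ \emph{only} through the term $\tfrac12(p-p^*)^\top T(p-p^*)$, because neither $\tfrac{d}{dt}\nabla U(\delta) = \nabla^2 U(\delta)\,\Pi\omega$ nor $M\dot\omega = -D(\omega-\omega^*) - (\nabla U(\delta)-\nabla U(\delta^*)) - (p-p^*)$ contains $\eta$; hence $\dot V(x) = -\chi^\top H(\delta)\chi + (p-p^*)^\top\eta$. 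Using $\|p-p^*\|\le\|\chi\|$, $\lambda_{\min}(H(\delta))\ge\beta_4$, and Young's inequality $\|\chi\|\,\|\eta\| \le \tfrac{\beta_4}{2}\|\chi\|^2 + \tfrac{1}{2\beta_4}\|\eta\|^2$, I obtain, as long as $\cB^\top\delta\in\Theta$,
\[ \dot V(x) \;\le\; -\tfrac{\beta_4}{2}\|\chi\|^2 + \tfrac{1}{2\beta_4}\|\eta\|^2 \;\le\; -\tfrac{\beta_3\beta_4}{2\beta_2}\,V(x) + \tfrac{1}{2\beta_4}\|\eta(\cdot)\|_\infty^2 \;=:\; -\hat\alpha\,V(x) + c_\eta\|\eta(\cdot)\|_\infty^2 . \]

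Next I would discharge the restrictions exactly as in the sublevel-set construction at the end of the proof of Theorem~\ref{Theorem: ES of leaky integral control}. Let $\Omega_c = \{x: V(x)\le c\}$ with $c$ as in \eqref{eq:c}, so that $\Omega_c\subseteq\{\cB^\top\delta\in\Theta\}$. Choose $\overline\eta>0$ small enough that $\tfrac{c_\eta}{\hat\alpha}\,\overline\eta^2 \le c/2$ and set $\mathcal X := \{x: V(x)\le c/2\}$. Then for any $x_0\in\mathcal X$ and any admissible $\eta$ with $\|\eta(\cdot)\|_\infty\le\overline\eta$, the displayed differential inequality shows that $V(x(t))$ can never exceed $\max\{V(x_0),\,\tfrac{c_\eta}{\hat\alpha}\|\eta(\cdot)\|_\infty^2\}\le c/2 < c$, so the trajectory stays in $\Omega_c\subseteq\Theta$ and the inequality $\dot V\le -\hat\alpha V + c_\eta\|\eta(\cdot)\|_\infty^2$ holds for all $t\ge0$. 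The comparison lemma \cite[Lemma B.2]{khalil14nonlinear} then gives $V(x(t))\le e^{-\hat\alpha t}V(x_0) + \tfrac{c_\eta}{\hat\alpha}\|\eta(\cdot)\|_\infty^2$, and combining with $\beta_1\|x\|^2\le V(x)\le\beta_2\|x\|^2$ yields \eqref{eq: ISS of biased leaky integral control} with $\lambda = \beta_2/\beta_1$, $\hat\alpha$ as above, and $\gamma = c_\eta/(\beta_1\hat\alpha) = 1/(2\beta_1\beta_4\hat\alpha) = \beta_2/(\beta_1\beta_3\beta_4^2)$; this is precisely an ISS estimate with restrictions $\mathcal X$ and $\overline\eta$ in the sense of Definition~\ref{Definition: Input-to-state-stability with restrictions}, with $\beta(r,t)=\sqrt{\lambda}\,e^{-\hat\alpha t/2}r$ and $\gamma(\cdot)$ linear.

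Finally, the monotonicity in the scaled gains would be read off just as in the proof of Theorem~\ref{Theorem: ES of leaky integral control}: scaling $K\mapsto\kappa K$ makes $\beta_4$ monotonically non-decreasing, leaves $H(\delta)$ — hence $\beta_4$ — independent of $\tau$, and leaves $V$, and therefore $\beta_1,\beta_2$, independent of $\kappa$; scaling $T\mapsto\tau T$ leaves $H(\delta)$ and $\beta_3,\beta_4$ untouched and makes $\beta_2$ non-decreasing in $\tau$, with $\beta_1,\beta_2$ scaling with $\tau$ only in the $p$-block. Substituting into $\hat\alpha = \beta_3\beta_4/(2\beta_2)$ and $\gamma = \beta_2/(\beta_1\beta_3\beta_4^2)$ then gives that $\hat\alpha$ is non-decreasing in $\kappa$ and non-increasing in $\tau$, and that $\gamma$ is decreasing in $\kappa$ and non-increasing in $\tau$. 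I expect the main obstacle to be the restriction bookkeeping: one must couple the size of $\overline\eta$, the set $\mathcal X$, and the sublevel set $\Omega_c\subseteq\Theta$ so that a bounded disturbance cannot inflate $V$ past the region on which $H(\delta)>0$ and the quadratic sandwich were derived; and, for the monotonicity claims (in particular the $\tau$-dependence of $\gamma$, which goes through the ratio $\beta_2/\beta_1$) to be genuinely uniform, one must choose $\beta_1,\beta_2,\beta_3,\beta_4$ and keep the admissible range of the cross-term parameter $\epsilon$ (which varies with $\kappa$ through $K-\epsilon I>0$) consistent across the whole argument.
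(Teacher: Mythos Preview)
Your proposal is correct and follows essentially the same route as the paper: reuse $V$ and all estimates from Theorem~\ref{Theorem: ES of leaky integral control}, pick up the single extra term $(p-p^*)^\top\eta$ in $\dot V$, apply Young's inequality to get $\dot V \le -\hat\alpha V + c_\eta\|\eta\|^2$ on a sublevel set contained in $\{\cB^\top\delta\in\Theta\}$, choose the restrictions so that trajectories stay there, and invoke the comparison lemma.

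The one technical difference worth noting is that the paper applies Young with a \emph{free} parameter $\mu$ and absorbs the resulting $\mu\|p-p^*\|^2$ term into a modified matrix $\hat H(\delta)=H(\delta)-\mathrm{block\,diag}(0,0,\mu I)$, then works with $\hat\beta_4=\min_\Theta\lambda_{\min}(\hat H)$ and $\gamma=(\hat\alpha\beta_1\mu)^{-1}$; you instead fix the Young split at $\beta_4/2$ and keep the nominal $\beta_4$. This extra degree of freedom is precisely what carries the \emph{strict} monotonicity of $\gamma$ in $\kappa$: as $\kappa K$ grows one can enlarge $\mu$ while keeping $\kappa K-\epsilon I-\mu I>0$, so $\gamma$ strictly decreases. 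In your formulation $\gamma=\beta_2/(\beta_1\beta_3\beta_4^2)$ only inherits non-increase from the non-decrease of $\beta_4$ established in Theorem~\ref{Theorem: ES of leaky integral control}, so your claimed ``decreasing in $\kappa$'' is not quite justified without either sharpening $\beta_4$ to strictly increasing or reintroducing such a free parameter. Apart from this, your argument (including the slightly more conservative restriction choice $\mathcal X=\Omega_{c/2}$ in place of the paper's $\mathcal X=\Omega_c$) is fine.
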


\begin{proof}
We start by extending the Lyapunov arguments from the proof of Theorem~\ref{Theorem: ES of leaky integral control} to take the noise $\eta(t)$ into account, obtaining again an upper bound of $\dot V(x)$ in terms of $V(x)$.

From the proof of Theorem~\ref{Theorem: ES of leaky integral control} recall the Lyapunov function derivative $\dot V(x) = -\chi^\top H(\delta) \chi - (p -  p^{*})^\top \eta$.
Since for any positive parameter $\mu$,
\[ -(p -  p^{*})^\top \eta
   \le \mu \|p -  p^{*}\|^2 + \frac{1}{\mu} \|\eta\|^2\,, \]
one further obtains
\[ \dot V(x)
   \leq -\chi^\top
   \underbrace{
   \left( H(\delta) - \begin{bmatrix}
\vectorzeros[] & \vectorzeros[] & \vectorzeros[]\\
\vectorzeros[] & \vectorzeros[] & \vectorzeros[]\\
\vectorzeros[] & \vectorzeros[] & \mu I
\end{bmatrix} \right)
}_{= \hat H(\delta)}
   \chi + \frac{1}{\mu} \|\eta\|^2 \,.\]
Following the reasoning in the proof of Theorem~\ref{Theorem: ES of leaky integral control}, we note that $\hat H(\delta) \geq \hat H'(\delta)$, where
\[ \hat H'(\delta) := \begin{bmatrix}
         \frac{\epsilon}{2} I     & \vectorzeros[n \times n] & \vectorzeros[n \times n] \\
         \vectorzeros[n \times n] & D - \epsilon (E(\delta) + D^2)   & \vectorzeros[n \times n] \\
         \vectorzeros[n \times n] & \vectorzeros[n \times n] & K - \epsilon I - \mu I
         \end{bmatrix}. \]
It follows that for sufficiently small values of $\epsilon$ and $\mu$,  $\hat H(\delta)\ge \hat H'(\delta)>0$.
To continue, let $\hat\beta_4 := \min_{\cB^\top \delta \in \Theta} \lambda_{\rm min}(\hat H(\delta))$.
As a result, we find that for a positive constant $\hat\alpha = \frac{\beta_3 \hat\beta_4}{\beta_2}$,
\begin{equation} \label{eq:V dot noisy}
	\dot V(x) \leq - \hat\alpha V(x) + \frac{1}{\mu} \|\eta\|^2
\end{equation}
for all $x$ such that $\cB^\top \delta \in \Theta$.

We now again make sure that no solutions can leave the set $\Theta$.
To make this possible, it is necessary to impose a restriction on the magnitude of the noise, $\bar\eta$, and the set of possible initial states, $\mathcal{X}$.
In the remainder of the proof, we fix $\bar \eta$ such that
\[ \bar\eta = \hat \alpha c \mu. \]
with $c$ defined as in \eqref{eq:c} in the proof of Theorem \ref{Theorem: ES of leaky integral control}.

Define the sublevel set $\Omega_c$, again as in the proof of Theorem \ref{Theorem: ES of leaky integral control}.
We now claim that the solutions of the closed-loop system cannot leave $\Omega_c$.
In fact, on the boundary $\partial \Omega_c$ of the sublevel set $\Omega_c$, the right-hand side of \eqref{eq:V dot noisy} equals $- \hat\alpha c + \frac{1}{\mu} \|\eta\|^2$, which is a non-positive constant by the choice of $\bar{\eta}$.
Hence a solution leaving $\Omega_c$ would contradict the property that $\dot V(x)\le 0$ for all ${x} \in \partial \Omega_c$.
We conclude that all solutions must satisfy \eqref{eq:V dot noisy} for all $t\in \mathbb{R}_{\ge 0}$.
Hence, we choose ${\mathcal X} = \Omega_c$.

Having validated \eqref{eq:V dot noisy}, we now derive the exponential bound \eqref{eq: ISS of biased leaky integral control}.
By the Comparison Lemma, the use of convolution integral and bounding $\|\eta(t)\|^2$ by $\|\eta(\cdot)\|^2_\infty$,
we arrive at
\[ V(x(t)) \leq e^{-\hat\alpha t} V(x_{0}) + \frac{1}{\hat\alpha \mu} \|\eta(\cdot)\|_\infty^2. \]
We combine this inequality with \eqref{eq: lyapunov with cross-term bounds} and \eqref{eq: bound on potential function} to arrive at \eqref{eq: ISS of biased leaky integral control} with $\lambda = \beta_2/\beta_1$ and $\gamma = (\hat\alpha \beta_1 \mu)^{-1}$.

Finally, we address the effect of $K$ and $T$ on $\hat \alpha$ and $\gamma$ by introducing the scalar factors $\kappa$ and $\tau$ multiplying $K$ and $T$.

As $\kappa$ increases, there is no need to increase $\epsilon$, while it is possible to increase $\mu$.
Analogously to the reasoning in the proof of Theorem~\ref{Theorem: ES of leaky integral control}, increasing the value of $\kappa$ for constant $\epsilon$ and increasing $\mu$ can not lower the value of $\hat \beta_4$ and $\hat\alpha$, and decreases the value of $\gamma$.
If one \emph{decreases} $\kappa$,  but multiplies $\mu$ by the same factor so as to keep $\hat \beta_4$ constant, $\mu$ {will} also decrease.
This guarantees $\hat\alpha$ remains constant in this case, preserving its status as a non-decreasing function of $\kappa$.
On the other hand, a decrease in $\mu$ results in an increase in $\gamma$, retaining its status as a
decreasing
function of $\kappa$.
Therefore, $\hat\alpha$ is non-decreasing as a function of $\kappa$ and $\gamma$ is
decreasing.

As in Theorem~\ref{Theorem: ES of leaky integral control}, $\tau$ affects only $\beta_1$ and $\beta_2$, and the same result holds: $\hat\alpha$ is a monotonically non-increasing function of $\tau$.
Analogously, $\gamma$ is monotonically non-increasing in $\tau$.
\end{proof}

Theorem \ref{Theorem: ISS under biased leaky integral control} shows that larger gains $K$ (and $T$) reduce (respectively, do not amplify) the effect of the noise $\eta$ on the state $x$.
This further emphasizes the trade-off between frequency banding and controller performance already touched on in Section~\ref{Subsec: steady-state analysis}.
%
We further extend and formalize this trade-off in Subsection~\ref{subsec:tuning} by means of a $\mathcal H_{2}$ performance analysis.

\begin{remark}[Exponential ISS with restrictions]
The $\mathcal{KL}$--function from the ISS inequality \eqref{eq: ISS of biased leaky integral control} is an exponential function, so the stability property  is in fact exponential ISS with restrictions.
The need to include restrictions $\mathcal{X}$ on the initial conditions and $\bar \eta$ on the noise is due to the requirement of maintaining the state response within the safety region $\Theta$.
\oprocend
\end{remark}

\subsection{$\mathcal H_{2}$ Performance Analysis}
\label{Subsec: performance analysis}

All findings thus far show that the closed-loop performance crucially depends on the choice of $K_{i}$ and $T_{i}$.
Small gains $K_{i}$ are advantageous for steady-state properties, large gains $K_{i}$ and $T_{i}$ are advantageous for noise rejection, and {the nominal  performance does not deteriorate when increasing $K_{i}/T_{i}$}.
To further understand this trade-off we now study the transient performance in the presence of stochastic disturbances by means of the $\mathcal H_{2}$ norm.
The use of the $\mathcal H_{2}$ norm for evaluating power network performance was first introduced in~\cite{Tegling2015theprice}. This versatile framework allows to characterize various network properties such as resistive power losses \cite{Tegling2015theprice}, voltage deviations \cite{7963412}, the role of inertia \cite{poolla2017optimal}, phase coherence~\cite{7525264}, in the presence of stochastic disturbances, as well as network-wide frequency transients induced by step changes~\cite{8264613,8262755}.

Here we investigate in a stochastic setting the effect of the gains $K$ and $T$ on the steady-state frequency variance in the presence of power disturbances and noisy frequency measurements modeled as white noise inputs. More precisely, we compute the $\mathcal{H}_2$ norm of the system \eqref{eq: leaky closed loop} with output $\omega (t)$ and inputs in \eqref{eq: leaky closed loop-b} and \eqref{eq: leaky closed loop-c}.
With this aim, we first linearize \eqref{eq: leaky closed loop} around a steady state $(\theta^*,\omega^*,p^*)$.\footnote{Of course, care must be taken when interpreting the results in this section since the steady-state itself depends on the controller gain $K$ (see Section \ref{Subsec: steady-state analysis}), but here we are merely interested in the transient performance.}  Using $\nabla ^2 U(\theta^*)=L_B$, where $L_B$ is a weighted Laplacian matrix \cite{fb:17}, and redefining $(\theta,\omega, p)$ as deviation from steady state, the closed-loop model \eqref{eq: leaky closed loop} becomes
\begin{align*}
        \dot{\theta} =&\omega \;,\\
        M \dot{\omega} =&- D\omega-L_B\theta - p \;,\\
        T \dot{p} =&\omega - Kp \;.
\end{align*}

We use $S_\zeta\zeta$ to denote the disturbances on the net power injection and  $S_\eta \eta$ to model the noise incurred in the frequency measurement required  to implement the controller \eqref{eq: leaky - integral control}. Then, by defining the system output as $y = \omega$, we get the LTI system
    \begin{align}\label{eq:Gleaky}
       \begin{bmatrix} \dot{\theta} \\ \dot{\omega} \\ \dot{p} \end{bmatrix}
       =&
       \underbrace{
       \begin{bmatrix} 0 & I & 0 \\ -M^{-1}L_B & -M^{-1}D & -M^{-1} \\ 0 & T^{-1} & -T^{-1}K \end{bmatrix}
       }_{=A}
       \begin{bmatrix} \theta \\ \omega \\ p \end{bmatrix}
       \\&
       +
       \underbrace{
       \begin{bmatrix} 0 & 0 \\ M^{-1}S_\zeta & 0 \\ 0 & T^{-1}S_\eta \end{bmatrix}
       }_{=B}
       \begin{bmatrix} \zeta \\ \eta \end{bmatrix} \; , \;
    y =
    \underbrace{
    \begin{bmatrix} 0 & I & 0  \end{bmatrix}
    }_{=C}
    \begin{bmatrix} \theta \\ \omega \\ p \end{bmatrix} \,.
    \nonumber%
    \end{align}%
The signals $\zeta\in \real^n$ and $\eta\in \real^n$ represent white noise with unit variance, i.e., $E[\zeta(t)^{\top}\zeta(\tau)]=\delta(t-\tau)I_n$ and $E[\eta(t)^{\top}\eta(\tau)]=\delta(t-\tau)I_n$, and $S_\zeta=\mathrm{diag}\{\sigma_{\zeta,i},i\in \{1,\ldots,n\}\}$, {${S_{\eta} }=\mathrm{diag}\{\sigma_{\eta,i},i\in \{1, \ldots, n\}\}$}.

{We are interested in understanding the effects of $K_i$ and $T_i$ on the system performance. To this aim, we will compute the $\mathcal H_2$ norm of \eqref{eq:Gleaky} and compare it with that of the pure integrator, as well as the open loop system.}
From \eqref{eq: leaky - integral control - transfer function} we see that for $K_{i} \searrow 0$ (respectively, for $K_{i} \nearrow \infty$)  for $i \in \until n$ we recover the closed-loop system controlled by pure integral control \eqref{eq: dec - integral control} (respectively, the open-loop system). Thus, in what follows, we denote the LTI system \eqref{eq:Gleaky} by $G_\text{\normalfont leaky}$, for $K = \vectorzeros[n \times n]$ by $G_\text{\normalfont integrator}$, and for $K_{i} \nearrow \infty$ by $G_\text{\normalfont open-loop}$.

The squared $\mathcal{H}_2$ norm of the LTI system \eqref{eq:Gleaky} is given by
\begin{flalign}\label{eq:dyn-cost}
\|G\|_{\mathcal{H}_2}^2 &= \lim_{t\rightarrow\infty} E [y^{\top}(t)y(t)].
\end{flalign}
Via the observability Gramian $X$, $\|G\|_{\mathcal{H}_2}^2$ can be computed as
\begin{equation}\label{eq:H2}
\|G\|_{\mathcal{H}_2}^2=\mathrm{tr} (B^{\top}XB)
\end{equation}
where $X$ solves the Lyapunov equation
\begin{equation}\label{eq:lyapunov}
A^{\top}X+XA=-C^{\top}C.
\end{equation}
Although a closed form solution of \eqref{eq:H2} is generally hard to calculate, it is possible to provide a qualitative analysis by assuming homogeneous parameters as in the following result.
\begin{theorem}[$\mathcal{H}_2$ norm of leaky integrator]\label{th:H2}
Consider the LTI power system model $G_\text{\normalfont leaky}$ in \eqref{eq:Gleaky}.
Assume homogeneous parameters, i.e., $M_i = m$, $D_i = d$, $T_i = \tau$, $K_i=k$, $\sigma_{\zeta,i}=\sigma_\zeta$, and $\sigma_{\eta,i}=\sigma_\eta$, $\forall i\in \{1, \ldots, n\}$. Then the squared $\mathcal{H}_2$ norm of $G_\text{\normalfont leaky}$ is given by
\begin{align} \label{eq:H2leaky}
    &\|G_\text{\normalfont leaky}\|^2_{\mathcal{H}_2}\\
    &= \dfrac{n \sigma_\zeta^2}{2md} + \sum_{i=1}^n\dfrac{-\dfrac{k}{d}\sigma_\zeta^2 +\sigma_\eta^2}{2d\left[mk^2 +\left(\dfrac{m}{d }+d\tau\right)k+ \tau+\lambda_i\tau^2\right]} \; . \nonumber
\end{align}
In particular, setting $k=0$ in \eqref{eq:H2leaky} gives
\begin{equation} \label{eq:H2integrator}
\|G_\text{\normalfont integrator}\|^2_{\mathcal{H}_2} = \dfrac{n \sigma_\zeta^2}{2md} + \sum_{i=1}^n\dfrac{\sigma_\eta^2}{2d\left(\tau+\lambda_i\tau^2\right)}\; ,
\end{equation}%
where $G_\text{\normalfont integrator}$ denotes the linearized power system model controlled by the pure integral controller \eqref{eq: dec - integral control}.
\end{theorem}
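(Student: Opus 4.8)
The plan is to use the homogeneity of the parameters to turn \eqref{eq:Gleaky} into $n$ decoupled scalar subsystems via the spectral decomposition of the Laplacian, to evaluate the $\mathcal{H}_2$ norm of each subsystem with the classical low-order integral formulas, and to sum the contributions. Write $L_B = V\Lambda V^\top$ with $V$ orthogonal and $\Lambda = \diag(\lambda_1,\dots,\lambda_n)$, $0 = \lambda_1 \le \lambda_2 \le \cdots \le \lambda_n$. Because $M = mI$, $D = dI$, $T = \tau I$, $K = kI$, $S_\zeta = \sigma_\zeta I$ and $S_\eta = \sigma_\eta I$ commute with $V$, the coordinate change $(\tilde\theta,\tilde\omega,\tilde p) = (V^\top\theta, V^\top\omega, V^\top p)$, $(\tilde\zeta,\tilde\eta) = (V^\top\zeta, V^\top\eta)$ — which leaves the white-noise statistics unchanged since $V$ is orthogonal — block-diagonalizes \eqref{eq:Gleaky}. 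The $i$-th block is the third-order system $\dot{\tilde\theta}_i = \tilde\omega_i$, $m\dot{\tilde\omega}_i = -\lambda_i\tilde\theta_i - d\tilde\omega_i - \tilde p_i + \sigma_\zeta\tilde\zeta_i$, $\tau\dot{\tilde p}_i = \tilde\omega_i - k\tilde p_i + \sigma_\eta\tilde\eta_i$ with scalar output $\tilde y_i = \tilde\omega_i$. Since $\|y\|^2 = \|\tilde\omega\|^2 = \sum_i \tilde\omega_i^2$ and the blocks are driven by mutually uncorrelated noises, \eqref{eq:dyn-cost} gives $\|G_\text{\normalfont leaky}\|_{\mathcal H_2}^2 = \sum_{i=1}^n \|G_i\|_{\mathcal H_2}^2$, where $G_i$ is the transfer matrix of the $i$-th block.

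Eliminating $\tilde\theta_i$ and $\tilde p_i$ in the frequency domain, the two scalar transfer functions of $G_i$ (from $\tilde\zeta_i$ and from $\tilde\eta_i$ to $\tilde y_i$) are $G_{i,\zeta}(s) = \sigma_\zeta\, s(\tau s + k)/D_i(s)$ and $G_{i,\eta}(s) = -\sigma_\eta\, s/D_i(s)$, where $D_i(s) = m\tau s^3 + (mk+d\tau)s^2 + (dk + \lambda_i\tau + 1)s + \lambda_i k$. For $\lambda_i > 0$ the Routh--Hurwitz condition reduces to $(mk+d\tau)(dk+\lambda_i\tau+1) - m\tau\lambda_i k > 0$, which we will show equals $d\,Q_i$ with $Q_i := mk^2 + (m/d + d\tau)k + \tau + \lambda_i\tau^2 > 0$, so $D_i$ is Hurwitz and $\|G_i\|_{\mathcal H_2}^2$ is finite. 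For the average mode $\lambda_1 = 0$ — and, when $k = 0$, for every mode — $D_i(s)$ has a simple root at the origin, but it cancels against the factor $s$ present in both numerators, leaving a stable second-order pair; equivalently, the $\tilde\theta_1$-state is unobservable from $y = \omega$ and contributes nothing. Hence $\|G_i\|_{\mathcal H_2}^2 = \|G_{i,\zeta}\|_{\mathcal H_2}^2 + \|G_{i,\eta}\|_{\mathcal H_2}^2$ is well-defined in all cases, in particular for $k = 0$.

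It remains to evaluate these norms. Applying the classical $\mathcal{H}_2$-integral formulas for rational functions of degree at most three (see, e.g., \cite{franklin1994feedback}) to $G_{i,\zeta}$ (numerator $\sigma_\zeta(\tau s^2 + k s)$) and $G_{i,\eta}$ (numerator $-\sigma_\eta s$), and using the two algebraic identities $(mk+d\tau)(dk+\lambda_i\tau+1) - m\tau\lambda_i k = d\,Q_i$ and $\tau(dk+\lambda_i\tau+1) + mk^2 = Q_i - (m/d)k$, one obtains $\|G_{i,\zeta}\|_{\mathcal H_2}^2 = \frac{\sigma_\zeta^2}{2md} - \frac{(k/d)\sigma_\zeta^2}{2dQ_i}$ and $\|G_{i,\eta}\|_{\mathcal H_2}^2 = \frac{\sigma_\eta^2}{2dQ_i}$; the zero-eigenvalue and $k=0$ situations, handled through the reduced second-order transfer functions, give the same expressions with $Q_i$ evaluated at the relevant $(\lambda_i,k)$. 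Summing over $i = 1,\dots,n$ yields \eqref{eq:H2leaky}, and substituting $k = 0$, for which $Q_i = \tau + \lambda_i\tau^2$, yields \eqref{eq:H2integrator}.

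The conceptual load is light; the real work is the bookkeeping in the last step — pushing the cubic $\mathcal{H}_2$-integral through and verifying that its Routh-type denominator collapses to $d\,Q_i$ and that the $\tilde\zeta_i$-numerator collapses to $Q_i - (m/d)k$, which is exactly what makes the modal sum consolidate into the clean form \eqref{eq:H2leaky}. A secondary point needing care is the marginally stable $\tilde\theta$-average mode: one must observe that it is decoupled from, and unobservable through, the output, so that $\|G_\text{\normalfont leaky}\|_{\mathcal H_2}$ is finite and equal to the modal sum even though $A$ itself is not Hurwitz — hence one should reason from \eqref{eq:dyn-cost} directly rather than naively invoking \eqref{eq:H2}--\eqref{eq:lyapunov} for the full system.
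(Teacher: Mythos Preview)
Your proposal is correct and follows essentially the same route as the paper: both diagonalize \eqref{eq:Gleaky} via the orthonormal eigenbasis of $L_B$, reduce to $n$ decoupled scalar subsystems, handle the unobservable $\tilde\theta_1$-mode separately, and sum the per-mode contributions. The only difference is in how the per-mode $\mathcal H_2$ norm is computed: the paper solves the $3\times 3$ (respectively $2\times 2$) Lyapunov equation for the observability Gramian and reads off $\mathrm{tr}(B_i^\top Q B_i)$, whereas you compute the two scalar transfer functions and apply the tabulated third-order (respectively second-order) $\mathcal H_2$-integral formulas --- an equivalent textbook alternative that leads to the same algebraic identities $(mk+d\tau)(dk+\lambda_i\tau+1)-m\tau\lambda_i k = dQ_i$ and $\tau(dk+\lambda_i\tau+1)+mk^2 = Q_i-(m/d)k$.
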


\begin{proof}
Consider the {orthonormal} change of input, state, and output variables $\theta = U \theta'$, $\omega = U \omega'$, $p = U p'$, $y = U y'$, $\zeta = U \zeta'$, and $\eta = U \eta'$, where $U$ is the orthonormal transformation that diagonalizes $L_B$:  $U^{T}L_BU = \mathrm{diag} \{ \lambda_1, \ldots, \lambda_n \}$ with $\lambda_i$ being the $i$th eigenvalue of $L_B$ in increasing order ($\lambda_1=0< \lambda_2\leq\dots\leq\lambda_n$). The $\mathcal H_{2}$ norm is invariant under this transformation and \eqref{eq:Gleaky} decouples into $n$ subsystems:
\begin{align}
        \begin{bmatrix} \dot{\theta}_i' \\ \dot{\omega}_i' \\ \dot{p}_i' \end{bmatrix}
       =&
       \underbrace{\begin{bmatrix} 0 & 1 & 0 \\ -\dfrac{\lambda_i}{m} & -\dfrac{d}{m} & -\dfrac{1}{m} \\ 0 & \dfrac{1}{\tau} & -\dfrac{k}{\tau} \end{bmatrix}}_{=A_i} \begin{bmatrix} \theta_i' \\ \omega_i' \\ p_i' \end{bmatrix}
       + \underbrace{\begin{bmatrix} 0 & 0 \\ \dfrac{\sigma_\zeta}{m} & 0 \\ 0 & \dfrac{\sigma_\eta}{\tau} \end{bmatrix}}_{=B_i} \begin{bmatrix} {\eta_{p,i}}^{'} \\ {\eta_{\omega ,i}}^{'} \end{bmatrix} \; ,
  \nonumber\\
       y_i' =& \underbrace{\begin{bmatrix} 0 & 1 & 0 \end{bmatrix}}_{=C_i} \begin{bmatrix} \theta_i' \\ \omega_i' \\ p_i' \end{bmatrix} \; .
       \label{eq:sub}
    \end{align}
Then based on \eqref{eq:H2} and \eqref{eq:lyapunov}, $\|G_\text{leaky}\|^2_{\mathcal{H}_2}$ can be calculated by computing the norm of the $n$ subsystems \eqref{eq:sub} (see, e.g., \cite{Tegling2015theprice,mevsanovic2016comparison,simpson-porco2017,jpm2017cdc,poolla2017optimal}).
The key step is to solve $n$ Lyapunov equations
\begin{equation}
   A^{\top}_i Q + Q A_i = - C^{\top}_i C_i \; , \label{eq:9}
\end{equation}
where $Q$ must be symmetric and can thus be parameterized as
\begin{equation}
   Q = \begin{bmatrix} q_{11} & q_{12} & q_{13} \\ q_{12} & q_{22} & q_{23} \\ q_{13} & q_{23} & q_{33} \end{bmatrix} \; . \label{eq:10}
\end{equation}
Whenever $\lambda_i\not=0$ \eqref{eq:9} has a unique solution $Q$. For $\lambda_1=0$ the system \eqref{eq:sub} has a zero pole which could render infinite $\mathcal H_2$ norm and non-unique solutions to \eqref{eq:9}. We will later see that this mode is unobservable and thus the $\mathcal H_2$ norm {is finite}.

We now focus on the case $\lambda_i\not=0$.
Direct calculations show
\begin{subequations}
\begin{align}
    q_{11} &= \dfrac{\lambda_i}{d} \left(-\dfrac{km}{\tau^2} q_{33} + \dfrac{1}{2}\right) - \dfrac{\lambda_i}{\tau} q_{33} \; ,\\
    q_{12} &= 0 \; , \label{eq:13}\\
    q_{13} &= \lambda_i q_{33} \; , \label{eq:15}\\
    q_{22} &= \dfrac{m}{d} \left(-\dfrac{km}{\tau^2} q_{33} + \dfrac{1}{2}\right) \; , \label{eq:16}\\
    q_{23} &= -\dfrac{km}{\tau} q_{33} \; ,
\end{align}
\end{subequations}
where all solutions are parameterized in
\begin{align}
   q_{33} = \dfrac{1}{2d\left[\dfrac{m}{\tau^2 } k^2 +\left(\dfrac{m}{d\tau^2 }+\dfrac{d}{\tau}\right)k+ \dfrac{1}{\tau}+\lambda_i\right]}  \; . \label{eq:19}
\end{align}
Therefore, we obtain
\begin{equation}
    \|G_{\text{leaky},i}\|^2_{\mathcal{H}_2} = \mathrm{tr}(B_i^{\top} Q B_i) = \left(\dfrac{\sigma_\zeta}{m}\right)^2 q_{22} + \dfrac{ \sigma_\eta^2 }{\tau^2} q_{33} \; . \label{eq:21}
\end{equation}
By substituting \eqref{eq:16} and \eqref{eq:19} into \eqref{eq:21}, we arrive at
\begin{align}
    &\|G_{\text{leaky},i}\|^2_{\mathcal{H}_2}= \nonumber\\&   \dfrac{\dfrac{k}{ \tau^2}\left(-\dfrac{\sigma_\zeta^2}{d} +\dfrac{\sigma_\eta^2}{k}\right)}{2d\left[\dfrac{m}{\tau^2 } k^2 +\left(\dfrac{m}{d\tau^2 }+\dfrac{d}{\tau}\right)k+ \dfrac{1}{\tau}+\lambda_i\right]} + \dfrac{\sigma_\zeta^2}{2md}\; . \label{eq:Gleaky_nonzerocase}
\end{align}

We now consider the case $\lambda_i=0$, i.e., $i=1$. Since $\lambda_1=0$, neither $\dot\omega'_1$, nor $\dot p'_1$, nor $y'_1$ depend on $\theta'_1$ in \eqref{eq:sub}. Thus, $\theta_i'$ is not observable, and we can simplify the system \eqref{eq:sub} to
\begin{align*}
        \begin{bmatrix} \dot{\omega}_i' \\ \dot{p}_i' \end{bmatrix}
       &=
       \underbrace{\begin{bmatrix} -\dfrac{d}{m} & -\dfrac{1}{m} \\ \dfrac{1}{\tau} & -\dfrac{k}{\tau} \end{bmatrix}}_{=A_i} \begin{bmatrix} \omega_i' \\ p_i' \end{bmatrix}
       + \underbrace{\begin{bmatrix} \dfrac{\sigma_\zeta}{m} & 0 \\ 0 & \dfrac{\sigma_\eta}{\tau} \end{bmatrix}}_{=B_i} \begin{bmatrix} {\eta_{p,i}}^{'} \\ {\eta_{\omega ,i}}^{'} \end{bmatrix} \; ,
       \\
       y_i' &= \underbrace{\begin{bmatrix} 1 & 0 \end{bmatrix}}_{C_i} \begin{bmatrix} \omega_i' \\ p_i' \end{bmatrix} \; .
\end{align*}
Again, we solve the Lyapunov equation \eqref{eq:9}, but here $Q=Q^{\top}$ is a 2-by-2 matrix. A similar calculation as before yields that  $\|G_{\text{leaky},1}\|^2_{\mathcal{H}_2}$ is also given by \eqref{eq:Gleaky_nonzerocase} with $\lambda_1=0$. Therefore, $\|G_\text{leaky}\|^2_{\mathcal{H}_2}=\sum_{i=1}^n\|G_{\text{leaky},i}\|^2_{\mathcal{H}_2}$, which is equal to \eqref{eq:H2leaky}.

%

Finally, note from \eqref{eq: dec - integral control} and \eqref{eq: leaky - integral control} that the leaky integrator reduces to an integrator when $K=\vectorzeros[n \times n]$. It follows that $\|G_\text{integrator}\|^2_{\mathcal{H}_2}$ can be obtained by setting $k=0$ in \eqref{eq:H2leaky}. 
\end{proof}

Theorem \ref{th:H2} provides an explicit expression for the closed-loop $\mathcal{H}_2$ performance under leaky integral control \eqref{eq: leaky - integral control} as well as under  pure integral control  \eqref{eq: dec - integral control}. Observe from \eqref{eq:H2}, \eqref{eq:H2leaky}, and \eqref{eq:H2integrator} that power disturbances and measurement noise have an independent additive effect on the $\mathcal{H}_2$ norm. Thus, either of the two effects can be obtained by setting $\sigma_\eta=0$ or $\sigma_\zeta=0$.

The following {corollary}, {whose proof is in Appendix \ref{app:cor-10},}  shows the supremacy of leaky integral control over pure integral control for any positive gain $k$. Further, in the presence of only measurement noise, increasing $k$ {or $\tau$} always improves $\|G_\text{\normalfont leaky}\|^2_{\mathcal{H}_2}$ which is consistent with the ISS insights obtained from Theorem~\ref{Theorem: ISS under biased leaky integral control}.

\begin{corollary}[Monotonicity of the $\mathcal H_{2}$ norm]\label{cor:H2-2}
Under the assumptions of Theorem \ref{th:H2}, for any $k>0$ the closed-loop $\mathcal{H}_2$ norm under leaky integral control is strictly smaller than under pure integral control: $\|G_\text{\normalfont leaky}\|^2_{\mathcal{H}_2}<\|G_\text{\normalfont integrator}\|^2_{\mathcal{H}_2}$. Moreover, in absence of power disturbances, $\sigma_\zeta=0$,
$\|G_\text{\normalfont leaky}\|^2_{\mathcal{H}_2}$ is a strictly decreasing function of $k\geq0$ and $\tau \geq0$.
\end{corollary}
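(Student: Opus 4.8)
The plan is to argue directly from the closed-form expression \eqref{eq:H2leaky} furnished by Theorem~\ref{th:H2}, reducing both statements to elementary monotonicity properties of the scalar rational functions that appear summand-by-summand. Write
\[
  \|G_\text{\normalfont leaky}\|^2_{\mathcal{H}_2}
  = \frac{n\sigma_\zeta^2}{2md} + \sum_{i=1}^n \frac{N_i(k)}{D_i(k,\tau)},
  \qquad N_i(k) := \sigma_\eta^2 - \tfrac{k}{d}\sigma_\zeta^2,
\]
with $D_i(k,\tau) := 2d\bigl[mk^2 + (\tfrac{m}{d}+d\tau)k + \tau + \lambda_i\tau^2\bigr] > 0$. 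Since the constant $\tfrac{n\sigma_\zeta^2}{2md}$ is independent of $k$, and since $\|G_\text{\normalfont integrator}\|^2_{\mathcal{H}_2}$ in \eqref{eq:H2integrator} is exactly $\tfrac{n\sigma_\zeta^2}{2md} + \sum_i N_i(0)/D_i(0,\tau)$, both claims reduce to comparing the $i$-th terms; there is no need to evaluate the sums.

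For the supremacy claim I would fix $k>0$ (and $\tau>0$) and compare the $i$-th summands of \eqref{eq:H2leaky} and \eqref{eq:H2integrator}. The denominator $D_i(\cdot,\tau)$ is a quadratic in $k$ with strictly positive coefficients, so $D_i(k,\tau) > D_i(0,\tau) > 0$; and $N_i$ is affine and non-increasing in $k$. If $N_i(k) \ge 0$, then
\[
  \frac{N_i(k)}{D_i(k,\tau)} \;\le\; \frac{N_i(0)}{D_i(k,\tau)} \;<\; \frac{N_i(0)}{D_i(0,\tau)},
\]
where the last step is strict because $N_i(0) = \sigma_\eta^2 > 0$ whenever the noise is nontrivial (if $\sigma_\eta = 0$, then $N_i(k) \ge 0$ with $k>0$ forces $\sigma_\zeta = 0$, the degenerate all-zero case). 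If instead $N_i(k) < 0$, then $N_i(k)/D_i(k,\tau) < 0 \le N_i(0)/D_i(0,\tau)$ trivially. Summing these strict termwise inequalities over $i$ yields $\|G_\text{\normalfont leaky}\|^2_{\mathcal{H}_2} < \|G_\text{\normalfont integrator}\|^2_{\mathcal{H}_2}$; I would record explicitly the standing assumption $(\sigma_\zeta,\sigma_\eta)\neq(0,0)$, since otherwise both norms vanish.

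For the monotonicity claim I set $\sigma_\zeta = 0$, so the additive constant disappears and $\|G_\text{\normalfont leaky}\|^2_{\mathcal{H}_2} = \sigma_\eta^2 \sum_{i=1}^n D_i(k,\tau)^{-1}$. It then suffices to show each $D_i$ is strictly increasing in $k$ and in $\tau$ on $[0,\infty)$. As a polynomial in $k$ this is immediate: $\partial_k D_i = 2d\bigl(2mk + \tfrac{m}{d} + d\tau\bigr) \ge 2d\bigl(\tfrac{m}{d} + d\tau\bigr) > 0$. Regrouping $D_i = 2d\bigl[\lambda_i\tau^2 + (1+dk)\tau + mk^2 + \tfrac{m}{d}k\bigr]$ as a polynomial in $\tau$ gives $\partial_\tau D_i = 2d(2\lambda_i\tau + 1 + dk) \ge 2d > 0$, using $\lambda_i \ge 0$, $\tau \ge 0$, $k \ge 0$, $d>0$; note this also covers the zero mode $\lambda_1 = 0$. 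Hence each $D_i^{-1}$ is strictly decreasing in $k$ and in $\tau$, and so is the $\sigma_\eta^2$-weighted sum, provided $\sigma_\eta > 0$.

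I do not anticipate a genuine difficulty once Theorem~\ref{th:H2} supplies the explicit $\mathcal{H}_2$ formula: the corollary is essentially sign bookkeeping. The one point that requires care is the change of sign of the numerator $N_i(k)$ in the first claim — for large $k$ the measurement-noise term $\sigma_\eta^2$ no longer dominates the power-disturbance term $\tfrac{k}{d}\sigma_\zeta^2$, so a summand may turn negative — which is why the two-case split is needed rather than a naive ``numerator down, denominator up'' argument; beyond that one only has to remember to exclude the degenerate noise-free case.
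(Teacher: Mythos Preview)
Your proof is correct and, like the paper's, works directly from the closed-form expression \eqref{eq:H2leaky}. The only real difference is the decomposition used for the first claim. The paper splits the sum additively by noise source, writing $f(k)=f_\eta(k)+f_\zeta(k)$ with $f_\eta$ carrying the $\sigma_\eta^2$ part of the numerator and $f_\zeta$ carrying the $-\tfrac{k}{d}\sigma_\zeta^2$ part; it then computes $f_\eta'(k)<0$ and observes $f_\zeta(k)<f_\zeta(0)$ directly, and sums. You instead keep the combined numerator $N_i(k)=\sigma_\eta^2-\tfrac{k}{d}\sigma_\zeta^2$ and split on its sign, exploiting that $N_i$ is non-increasing while $D_i$ is strictly increasing in $k$. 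Your route is marginally more elementary for the first claim (no derivative is needed), and the two-case split handles the sign change of $N_i$ cleanly; the paper's noise-source split has the virtue that each piece is sign-definite from the start, so no casework is required. For the second claim both arguments coincide: strict monotonicity of $D_i(k,\tau)$ in each variable, established via the same partial derivatives.
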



\begin{remark}[Optimal $\mathcal H_{2}$ performance at open loop]\label{Remark:open loop}
Observe from \eqref{eq:H2leaky} that in the absence of power disturbances ($\sigma_\zeta=0$) and in the presence of measurement noise ($\sigma_\eta \neq 0$), the optimal gains are $k \nearrow \infty$ or $\tau \nearrow \infty$ which from \eqref{eq: leaky - integral control - transfer function} reduces to the open-loop case. 
This insight is consistent with the noise rejection bounds \eqref{eq: ISS of biased leaky integral control} in Theorem~\ref{Theorem: ISS under biased leaky integral control}. Of course, the steady-state characteristics in Section~\ref{Subsec: steady-state analysis} all demand a sufficiently small value of $k$, and power disturbances will typically be present as well.
Nevertheless, these considerations pose the question of whether leaky integral control can ever improve the open-loop performance
$
 \|G_\text{\normalfont open-loop}\|^2_{\mathcal H_2}:= n\sigma_\zeta^2/(2md)
$
obtained for {$k,\tau \nearrow \infty$}.
We explicitly address this question below.
\oprocend
\end{remark}

The next corollary, {whose proof is in Appendix \ref{app:cor-11},} will use the characterization of the effect of $\tau$ on the performance as a mechanism to derive an optimal choice for both $k$ and $\tau$ that can not only ensures improvement of the leaky integrator performance $\|G_\text{\normalfont leaky}\|_{\mathcal H_2}$ with respect to the pure integrator performance $\|G_\text{\normalfont integrator}\|_{\mathcal H_2}$ but also with respect to the open-loop performance $ \|G_\text{\normalfont open-loop}\|_{\mathcal H_2}$.

\begin{corollary}[$\mathcal H_{2}$ optimal tuning]\label{cor:H2-3}
Under the assumption of Theorem \ref{th:H2} {and} for any $\tau>0$, and $k$ such that
\begin{equation}\label{eq:condition}
{\frac{k}{d}}>\left(\frac{\sigma_\eta}{\sigma_\zeta}\right)^2
\,,
\end{equation}
the closed-loop performance under the leaky integral control outperforms the open-loop system performance, i.e.,
$$\|G_\text{\normalfont leaky}\|^2_{\mathcal H_2}<\|G_\text{\normalfont open-loop}\|^2_{\mathcal H_2}.$$
Moreover, the global minimum of the $\mathcal H_{2}$ norm under leaky integral control is obtained by setting $\tau\rightarrow \tau^*=0$ and $k$ to
\begin{equation}\label{eq:kstar}
{k^*}=d\left(\frac{\sigma_\eta}{\sigma_\zeta}\right)^2\left(1+\sqrt{1+\left(\frac{\sigma_\zeta}{d}\right)^2}\right).
\end{equation}
\end{corollary}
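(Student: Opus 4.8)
The plan is to argue directly from the closed-form expression \eqref{eq:H2leaky}, treating $\|G_\text{leaky}\|^2_{\mathcal H_2}$ as an explicit function $\Phi(k,\tau)$ of the two scalar gains and comparing it term-by-term with $\|G_\text{open-loop}\|^2_{\mathcal H_2}=n\sigma_\zeta^2/(2md)$. Writing $D_i(k,\tau):=2d\big[mk^2+(m/d+d\tau)k+\tau+\lambda_i\tau^2\big]$ for the $i$-th denominator in \eqref{eq:H2leaky}, one has $\Phi(k,\tau)-\|G_\text{open-loop}\|^2_{\mathcal H_2}=\sum_{i=1}^n \big(\sigma_\eta^2-(k/d)\sigma_\zeta^2\big)/D_i(k,\tau)$. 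Since every $D_i$ is strictly positive whenever $k,\tau>0$ and $\lambda_i\ge 0$, each summand carries the sign of the common numerator $\sigma_\eta^2-(k/d)\sigma_\zeta^2$, which is strictly negative precisely when \eqref{eq:condition} holds. This settles the first claim, uniformly over $\tau>0$.

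For the global minimum I would first localize the optimization over $(k,\tau)\in[0,\infty)^2$. On the region $\{\sigma_\eta^2-(k/d)\sigma_\zeta^2\ge 0\}$ the summation in \eqref{eq:H2leaky} is nonnegative, so $\Phi\ge \|G_\text{open-loop}\|^2_{\mathcal H_2}$, a value strictly exceeded at any point satisfying \eqref{eq:condition} by the first part; hence a minimizer must lie in $\{k/d>(\sigma_\eta/\sigma_\zeta)^2\}$. On this region every numerator $\sigma_\eta^2-(k/d)\sigma_\zeta^2$ is negative and each $D_i(k,\cdot)$ is strictly increasing in $\tau$ (its linear coefficient $2d(dk+1)$ is positive and $\lambda_i\ge 0$), so each term $\big(\sigma_\eta^2-(k/d)\sigma_\zeta^2\big)/D_i$ is increasing in $\tau$; therefore $\tau\mapsto\Phi(k,\tau)$ is minimized at $\tau=0$, giving $\tau^*=0$. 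Setting $\tau=0$ removes the $\lambda_i$-dependence and collapses \eqref{eq:H2leaky} to the one-dimensional map $\Phi(k,0)=n\sigma_\zeta^2/(2md)+n\big(\sigma_\eta^2-(k/d)\sigma_\zeta^2\big)/\big(2mk(dk+1)\big)$.

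It then remains to minimize $k\mapsto\Phi(k,0)$ over $k>d(\sigma_\eta/\sigma_\zeta)^2$. A short differentiation shows that its derivative has the sign of the quadratic $q(k):=\sigma_\zeta^2 k^2-2d\sigma_\eta^2 k-\sigma_\eta^2$, whose product of roots $-\sigma_\eta^2/\sigma_\zeta^2$ is negative, so $q$ has exactly one positive root $k^*$, with $q<0$ on $(0,k^*)$ and $q>0$ on $(k^*,\infty)$; hence $\Phi(\cdot,0)$ is strictly decreasing then strictly increasing and $k^*$ is its unique minimizer on $(0,\infty)$. Solving $q(k)=0$ and simplifying the radical yields $k^*$ as in \eqref{eq:kstar}. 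Finally I would check that $q\big(d(\sigma_\eta/\sigma_\zeta)^2\big)<0$, so $k^*$ indeed lies in the admissible region, and that $\Phi(k,0)\to\|G_\text{open-loop}\|^2_{\mathcal H_2}$ both as $k\downarrow d(\sigma_\eta/\sigma_\zeta)^2$ and as $k\to\infty$ while $\Phi(k^*,0)$ is strictly below that value; this confirms $(k^*,0)$ is the global minimizer of $\Phi$ over $[0,\infty)^2$.

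I expect the main obstacle to be the bookkeeping in the middle step: showing that the joint minimization over $(k,\tau)$ genuinely decouples — first collapsing $\tau$ to $0$, then optimizing the resulting scalar function of $k$ — while accounting for every corner of the unbounded domain, in particular the regime $\sigma_\eta^2-(k/d)\sigma_\zeta^2\ge 0$ where the infimum in $\tau$ is only approached as $\tau\to\infty$ and is never improved. The monotonicity-in-$\tau$ argument itself is robust since it rests only on signs; the delicate points are patching these regimes together to certify global optimality and carrying the radical simplification through cleanly to the stated closed form \eqref{eq:kstar}.
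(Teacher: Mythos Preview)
Your proposal is correct and follows essentially the same route as the paper: both read off the sign of the common numerator $\sigma_\eta^2-(k/d)\sigma_\zeta^2$ in \eqref{eq:H2leaky} to get the first claim, then use monotonicity of each summand in $\tau$ (under \eqref{eq:condition}) to collapse to $\tau^*=0$, and finally differentiate the resulting one-variable function to obtain the quadratic $\sigma_\zeta^2 k^2-2d\sigma_\eta^2 k-\sigma_\eta^2$ whose positive root is \eqref{eq:kstar}. Your treatment is in fact slightly more careful than the paper's in explicitly ruling out the complementary region $k/d\le(\sigma_\eta/\sigma_\zeta)^2$ and in verifying that $k^*$ lands in the admissible set, but the underlying argument is the same.
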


\begin{remark}[Necessity of condition~\eqref{eq:condition}]\label{Remark: Bad Scenario}
We highlight that condition \eqref{eq:condition} is in fact necessary for improving  performance beyond $\|G_\text{\normalfont open-loop}\|_{\mathcal H_2}$. When \eqref{eq:condition} is violated, $\frac{\partial}{\partial \tau}\|G_\text{leaky}\|_{\mathcal H_2}^2<0$; see Appendix \ref{app:cor-11}. In this case, if \eqref{eq:condition} does not hold, it is easy to see  from \eqref{eq:H2leaky} that $\|G_\text{\normalfont leaky}\|_{\mathcal H_2}\searrow \|G_\text{\normalfont open-loop}\|_{\mathcal H_2}$ as $\tau \nearrow \infty$, which implies $\|G_\text{\normalfont leaky}\|_{\mathcal H_2}>\|G_\text{\normalfont open-loop}\|_{\mathcal H_2}$.
\oprocend
\end{remark}


Corollary \ref{cor:H2-3} suggests that the optimal controller tuning requires $\tau^*=0$ which reduces the leaky integrator to a {proportional} droop controller with gain $1/{k^{*}}$.
However,  setting $\tau$ to small values reduces the response time $T_i/K_i=\tau/k$ of the leaky integrator, which in an actual implementation will be limited by the actuator's response time (not modeled here). We point out, however, that Corollary \ref{cor:H2-3} also shows that the leaky integrator provides performance improvements for any $\tau>0$, and thus this limitation will only affect the extent to which the $\mathcal H_2$ performance is improved.

The optimal value $k^*$ in  \eqref{eq:kstar} also unveils interesting tradeoffs between performance and robustness. More precisely, in the high power disturbance regime ${\sigma_\zeta}\nearrow\infty$, the optimal gain is $k^*\searrow 0$. The latter choice of course weakens the robustness properties described in Section \ref{Subsec: stability analysis}. On the other hand, in the presence of large measurement errors $\sigma_\eta\nearrow\infty$, one losses the ability to properly regulate the frequency as $k^* \nearrow\infty$, i.e., the open-loop case. 

\begin{remark}[Joint banded frequency restoration and optimal $\mathcal H_2$ performance]
This last discussion also unveils a critical trade-off of leaky integral control: it may be infeasible to jointly satisfy \eqref{eq:banded freq. restoration} and \eqref{eq:condition} when the measurement noise $\sigma_\eta$ is large. For a specified level $\varepsilon$ of frequency restoration, the parameter $k$ that satisfies \eqref{eq:banded freq. restoration}, or equivalently
\[
k \leq \left(\frac{|\sum_i P_i^*|}{n\varepsilon}-d\right)^{-1},
\] may not satify~\eqref{eq:condition} and thus leads to worse performance than open loop. Of course, one can still take $\tau$ large to mitigate this degradation, as in Remark~\ref{Remark:open loop}. However, this comes at the cost of lower convergence rate: large $\tau$ leads to slow feedback. We refer to Section~\ref{sec: summary} for further discussion of these tradeoffs.
  \oprocend{}
\end{remark}

\section{Case Study: IEEE 39 New England System}
\label{sec: case study}

\begin{figure}[!t]
\centering
\includegraphics[width=0.32\textwidth]{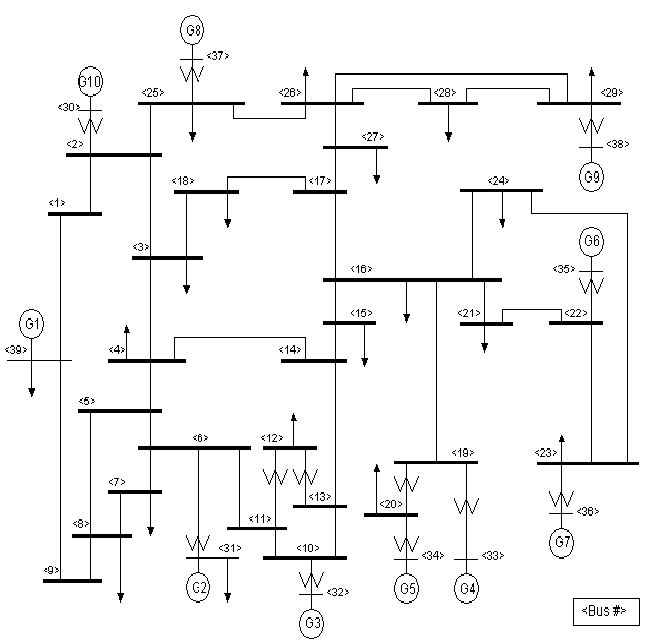}
\caption{The 39-bus New England system used in simulations.}
\label{fig:39bus}
\end{figure}

In this section we perform a case study  with the 39-bus New England system, see Figure \ref{fig:39bus}, which is modeled as in \eqref{eq: open loop}-\eqref{eq:power_flow} with parameters $M_i$ (for the 10 generator buses), $V_i$, and $B_{ij}$ taken from \cite{chow2000power}. The inertia coefficients $M_i$ are set to zero for the 29 (load) buses without generators.
Note that $M_i$'s in our simulations are heterogeneous, which relaxes our simplifying assumption in Section \ref{Subsec: performance analysis} that $M_i$'s are homogeneous and allows for testing the proposed scheme under a more realistic setting.
For every generator bus $i$, the damping coefficient $D_i$ is chosen as 20 per unit (pu) so that a 0.05pu (3Hz) change in frequency will cause a 1pu (1000MW) change in the generator output power. For every load bus $i$, $D_i$ is chosen as 1/200 of that of a generator.
Note that the generator turbine-governor dynamics are ignored in the model \eqref{eq: open loop}-\eqref{eq:power_flow} leading to a simulated frequency response that is faster than in practice, but the fundamental dynamics of the system are retained for a proof-of-concept illustration of the proposed controller.
For all simulations below, a 300MW step increase in active-power load occurs at each of buses 15, 23, 39 at time $t=5\textnormal{s}$.

\subsection{Comparison between controllers without noise}

We implement each of the following controllers across the 10 generators to stabilize the system after the increase in load:
\begin{enumerate}
\item {\em distributed-averaging based integral control} (DAI):%
\begin{subequations}%
\label{eq: DAI}%
\begin{align}%
u =& - p
\\
T \dot p =& A^{-1} \omega - L  A p\,. \label{eq:DAI-2}
\end{align}%
\end{subequations}%
Here $L=L^{\top}$ is the Laplacian matrix of a communication graph among the controllers, which we choose as a ring graph with uniform weights 0.1. The matrix $A$ is diagonal with entries $A_{ii} = a_i$ being the cost coefficients in \eqref{eq:ed.obj} chosen as $1.0$ for generators G3, G5, G6, G9, G10 and $2.0$ for all others. We choose the time constant $T_i = 0.05\textnormal{s}$ for every generator $i$. The DAI control \eqref{eq: DAI} is known to achieve stable and optimal frequency regulation as in Problem~\ref{Problem: opt freq reg}; see \cite{CZ-EM-FD:15,FD-JWSP-FB:14a,CDP-NM-JS-FD:16,ST-MB-CDP:16,MA-DVD-HS-KHJ:13,weitenberg2017exponential}. Even DAI control is based on a reliable and fast communication environment, we include it here as a baseline for comparison purposes.
\item {\em decentralized pure integral control} \eqref{eq: dec - integral control} with  time constant $T_i = 0.05\textnormal{s}$ for every generator $i$.
\item\label{item: leaky}
 {\em decentralized leaky integral control} \eqref{eq: leaky - integral control} with time constant $T_i = 0.05\textnormal{s}$ for every generator $i$. The gain $K_i$ equals $0.005$ for generators G3, G5, G6, G9, G10 and $0.01$ for the others. The $K_i$'s are proportional to $a_i$'s in DAI \eqref{eq: DAI} so that the dispatch objectives \eqref{eq:ed.obj} and \eqref{eq:ed.obj leaky} are identical.
\end{enumerate}

\begin{figure*}[!t]
\centering
\subfigure[DAI control]
{\includegraphics[width=0.25\textwidth]{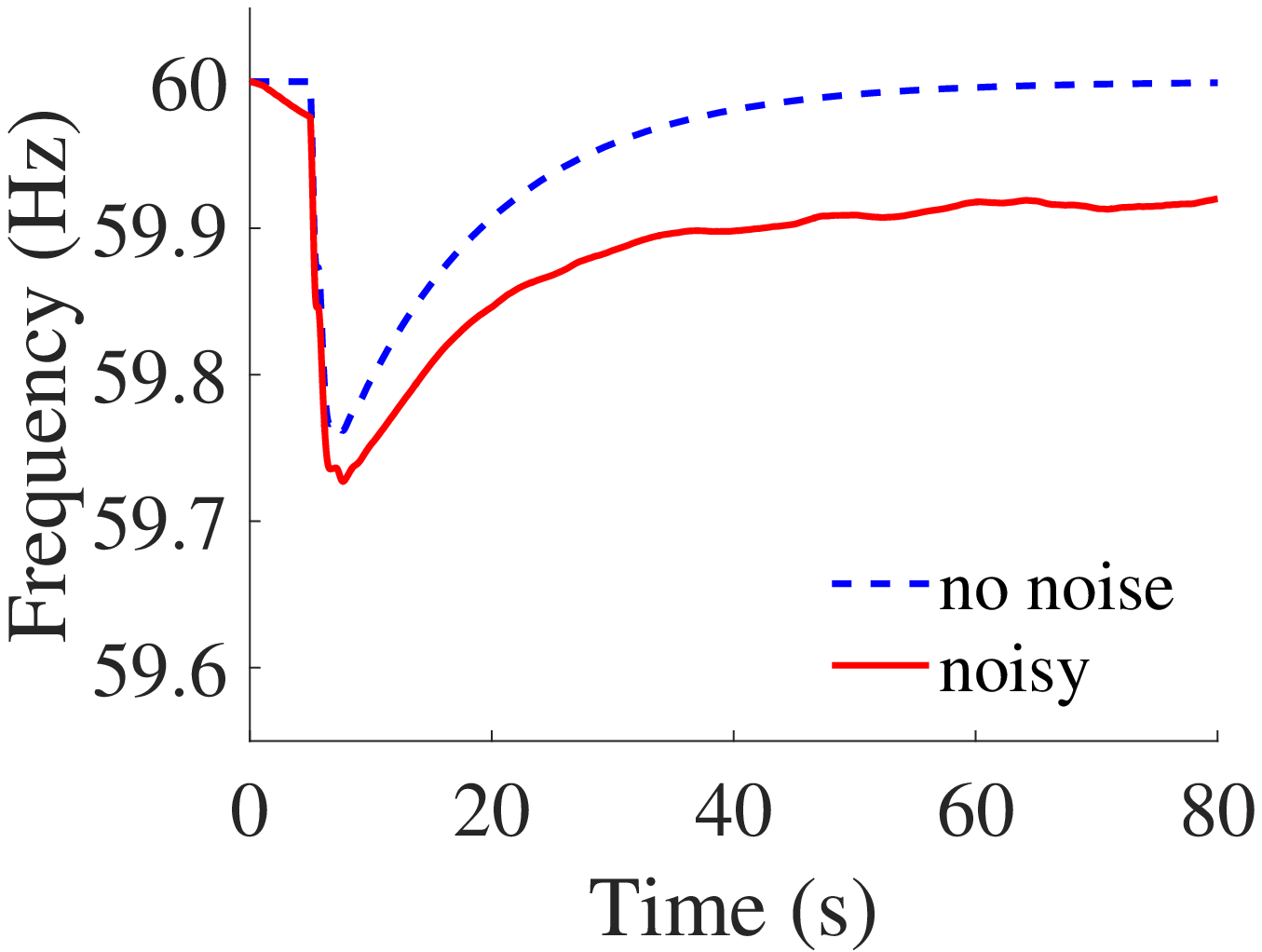}\label{fig:freq:DAI}}
\hfil
\subfigure[Decentralized pure integral control]
{\includegraphics[width=0.25\textwidth]{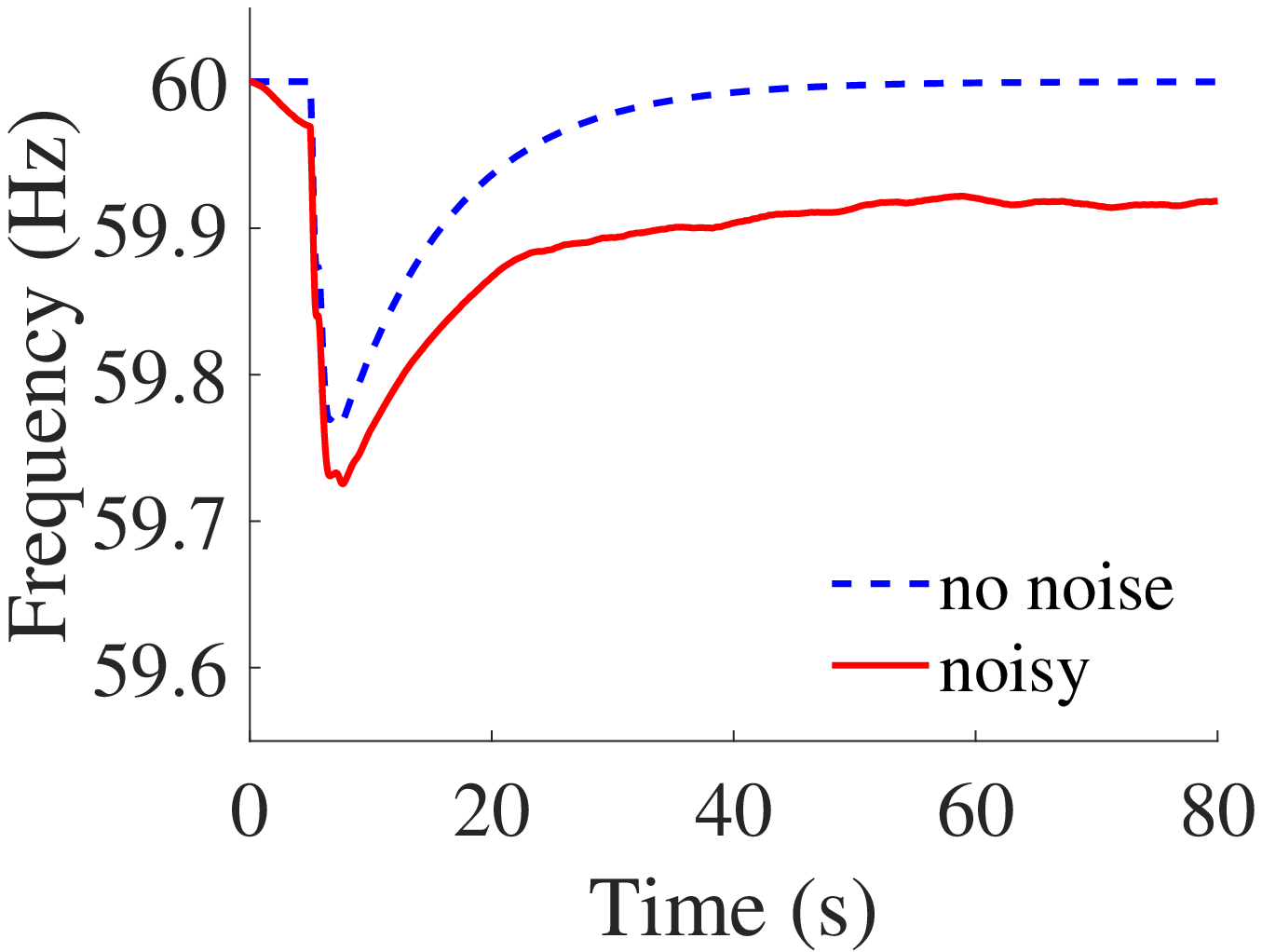}\label{fig:freq:DI}}
\hfil
\subfigure[Leaky integral control]
{\includegraphics[width=0.25\textwidth]{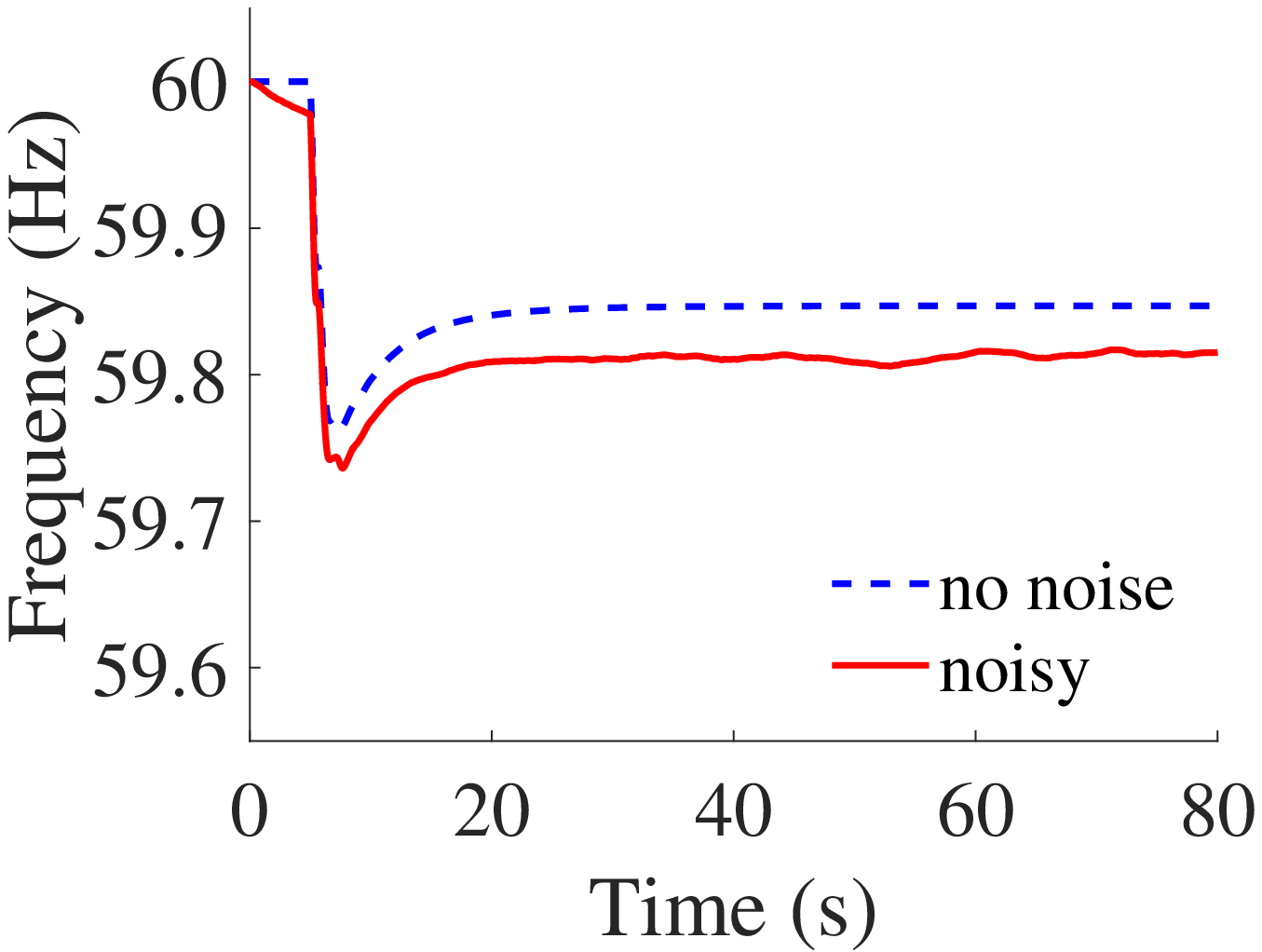}\label{fig:freq:LI-5-3}}
\caption{Frequency at generator G1 under different control methods.}\label{fig:freq}
\end{figure*}

\begin{figure*}[!t]
\centering
\subfigure[DAI control]
{\includegraphics[width=0.25\textwidth]{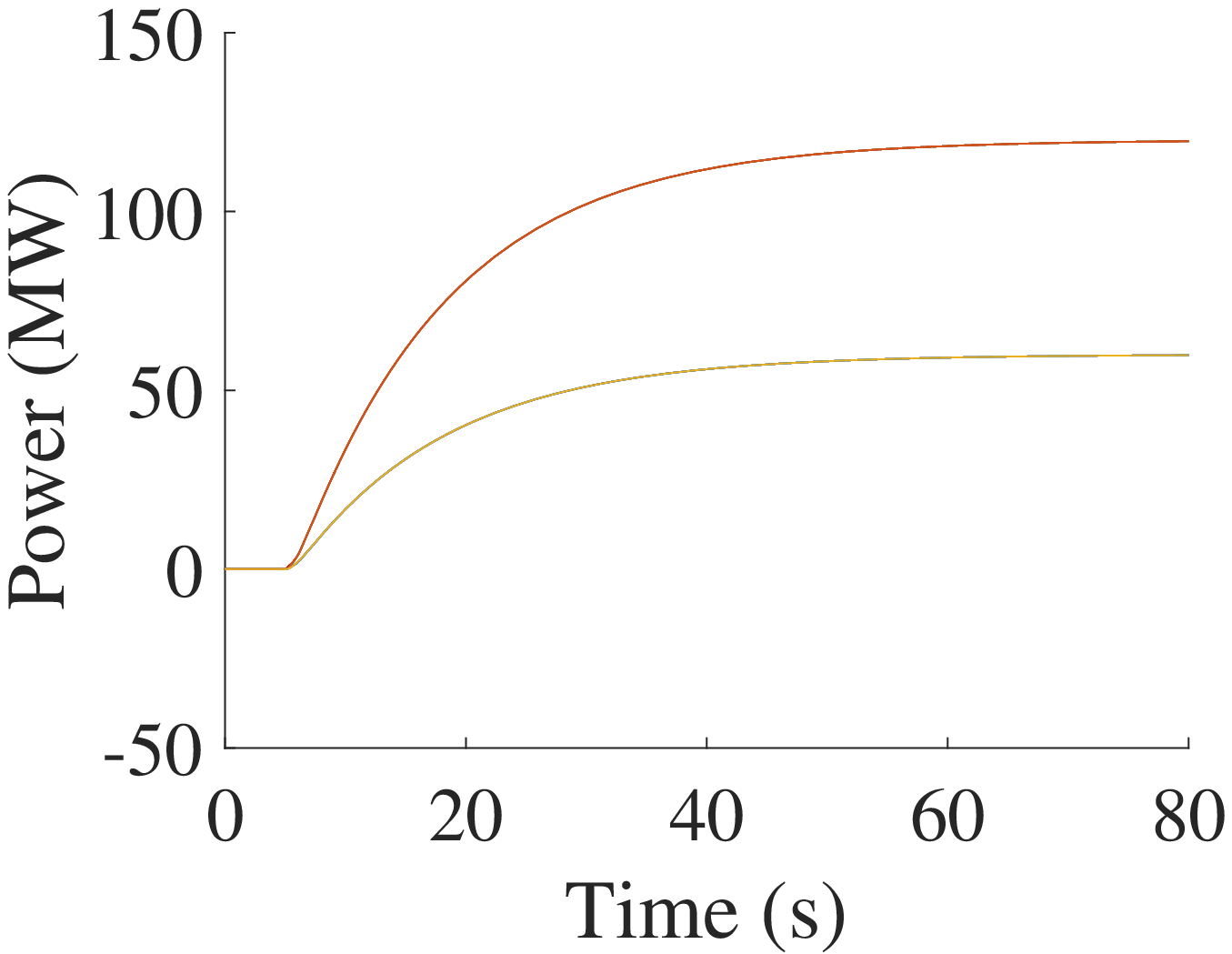}\label{fig:power:DAI-no-noise}}
\hfil
\subfigure[Decentralized pure integral control]
{\includegraphics[width=0.25\textwidth]{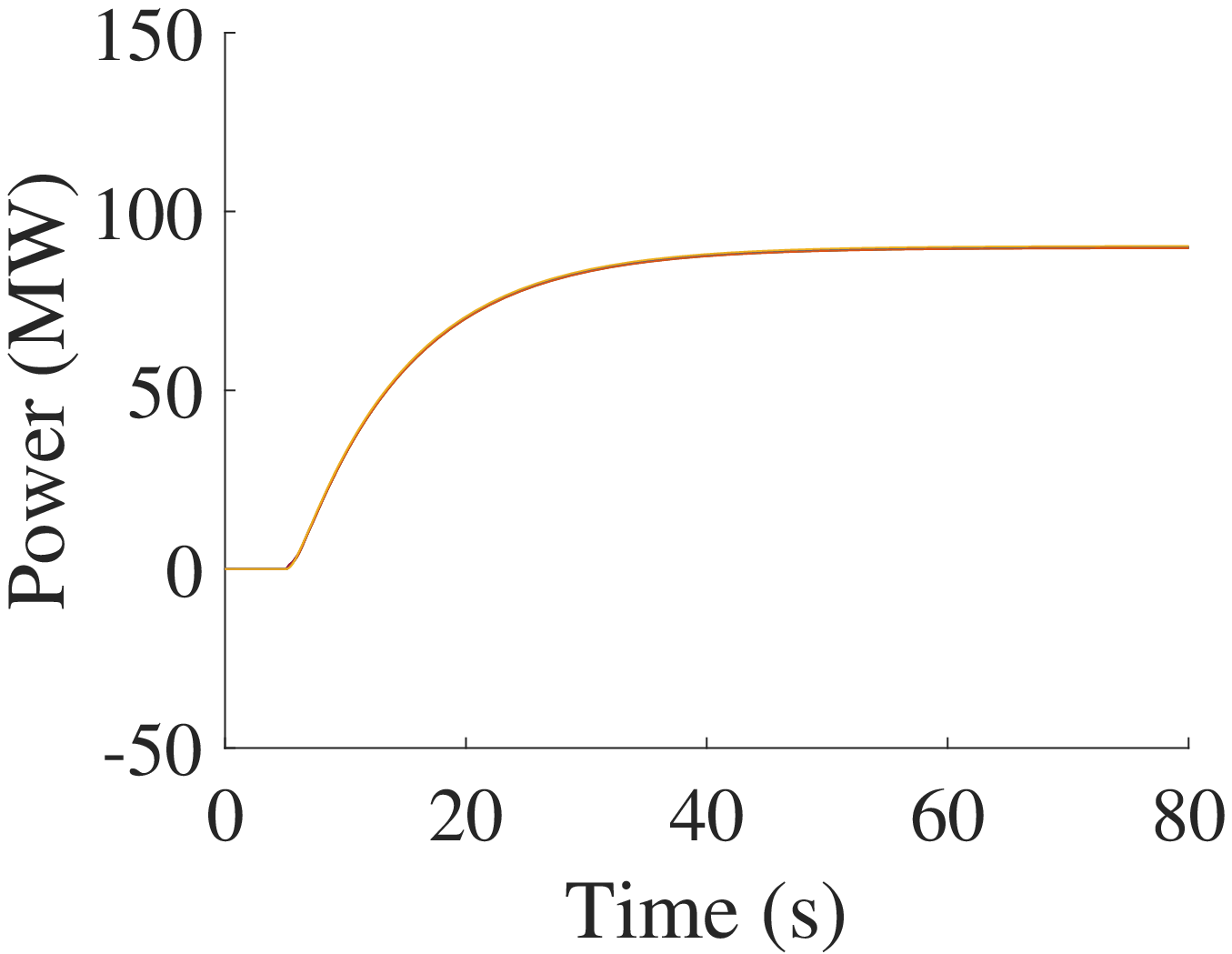}\label{fig:power:DI-no-noise}}
\hfil
\subfigure[Leaky integral control]
{\includegraphics[width=0.25\textwidth]{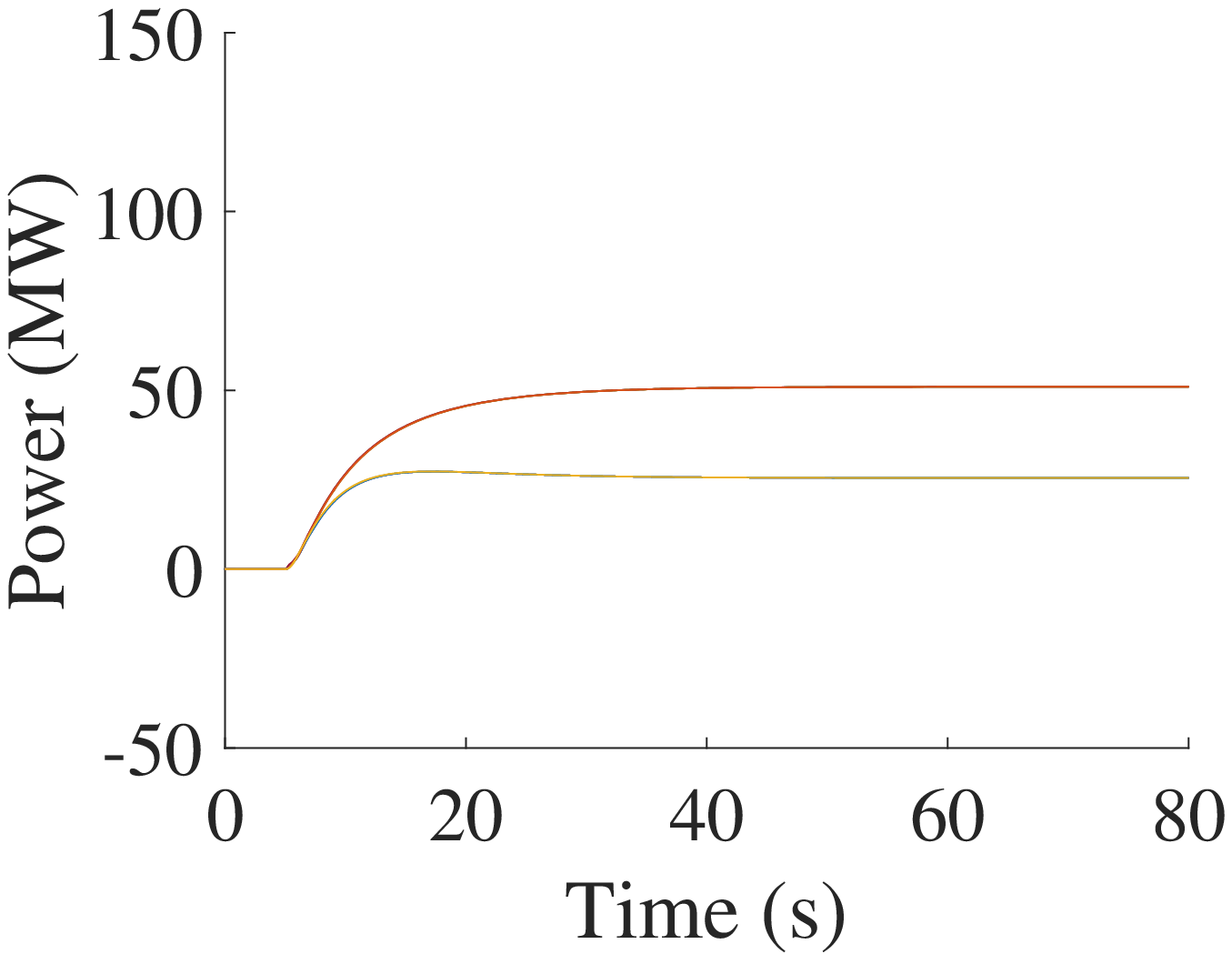}\label{fig:power:LI-5-3-no-noise}}
\caption{Changes in active-power outputs of all the generators without noise.}\label{fig:power-no-noise}
\end{figure*}

\begin{figure*}[!t]
\centering
\subfigure[DAI control]
{\includegraphics[width=0.25\textwidth]{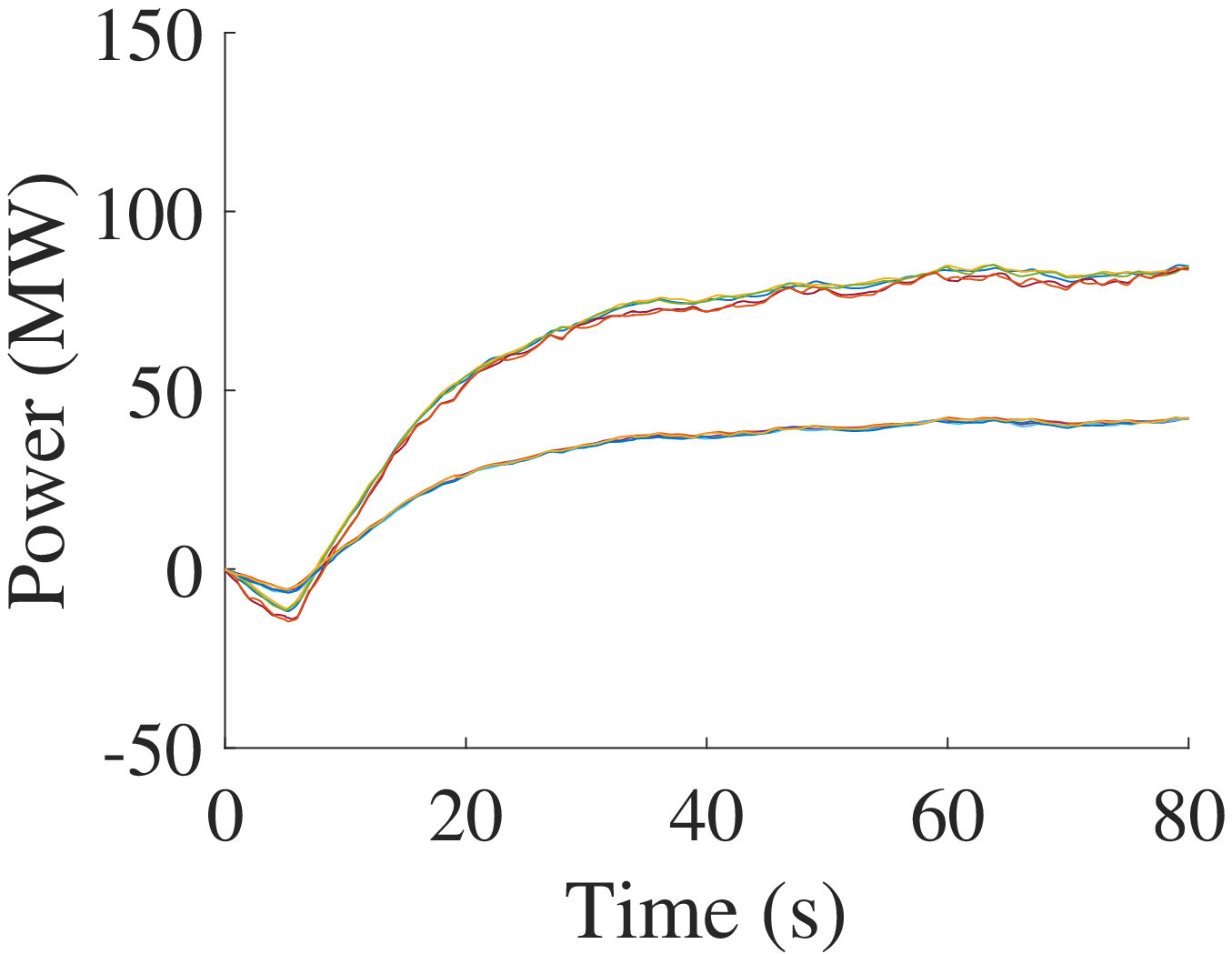}\label{fig:power:DAI-noisy}}
\hfil
\subfigure[Decentralized pure integral control]
{\includegraphics[width=0.25\textwidth]{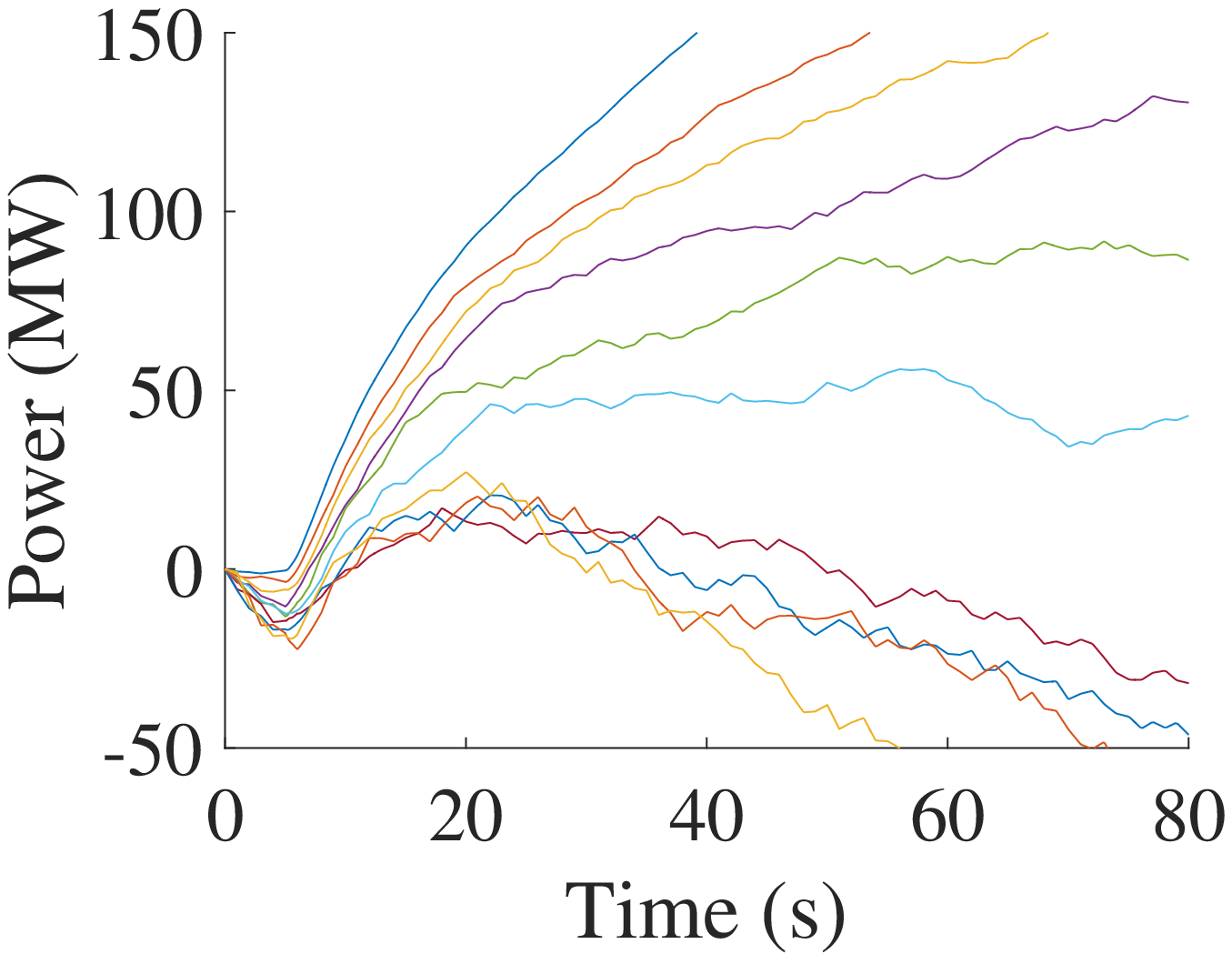}\label{fig:power:DI-noisy}}
\hfil
\subfigure[Leaky integral control]
{\includegraphics[width=0.25\textwidth]{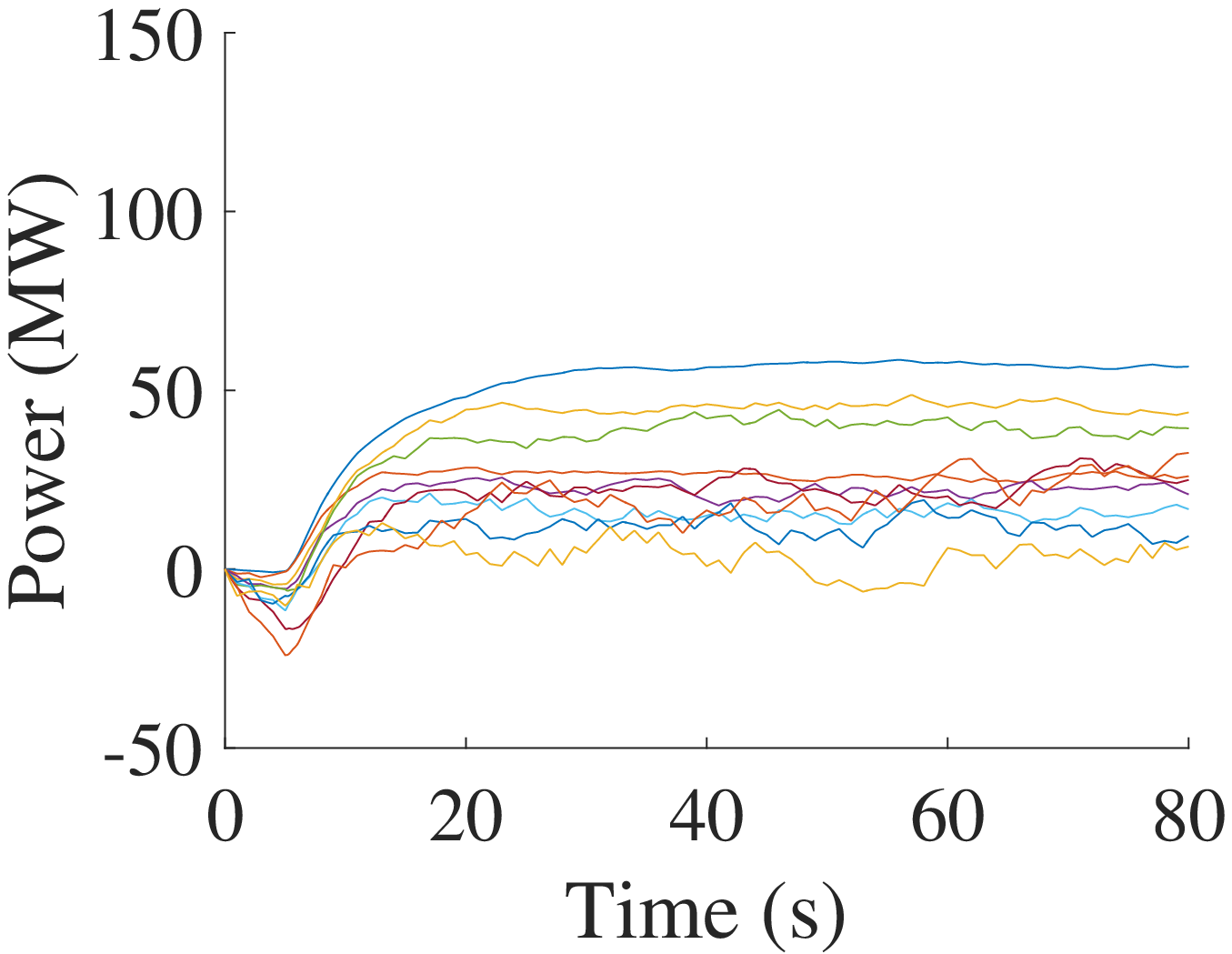}\label{fig:power:LI-5-3-noisy}}
\caption{Changes in active-power outputs of all the generators, under a frequency measurement noise bounded by $\overline \eta=0.01\textnormal{Hz}$.}
\label{fig:power-noisy}
\end{figure*}


Figure \ref{fig:freq} (dashed plots) shows the frequency at G1 (all other generators display similar frequency trends), and Figure \ref{fig:power-no-noise} shows the  active-power outputs of all generators, under the different controllers above and without noisy measurements.
First, note that all closed-loop systems reach stable steady-states; see Theorems~\ref{Theorem: global convergence} and \ref{Theorem: ISS under biased leaky integral control}.
Second, observe from Figure \ref{fig:freq} that both pure integral and DAI control can perfectly restore the frequencies to the nominal value, whereas leaky integral control leads to a steady-state frequency error as predicted in Lemma~\ref{Lemma: Steady-state frequency}.
Third, as observed from Figure \ref{fig:power-no-noise}, both DAI and leaky integral control achieve the desired asymptotic power sharing (2:1 ratio between G3, G5, G6, G9, G10 and other generators) as predicted in Corollary~\ref{Corollary: steady-state power sharing}. However, leaky integral control solves the dispatch problem \eqref{eq: econ disp leaky} thereby underestimating the net load compared to DAI which solves \eqref{eq: econ disp}; see Corollary~\ref{Corollary: steady-state power optimality}.
We conclude that fully decentralized leaky integral controller can achieve a performance similar to the communication-based DAI controller -- though at the cost of steady-state offsets in both frequency and power adjustment.%

\subsection{Comparison between controllers with noise}\label{subsec:sim:noisy}

Next, a noise term $\eta_i(t)$ is added to the frequency measurements $\omega$ in \eqref{eq:DAI-2}, \eqref{eq: dec - integral control-2}, and \eqref{eq:leaky-2} for DAI, pure integral, and leaky integral control, respectively. The noise $\eta_i(t)$ is sampled from a uniform distribution on $[0, \overline \eta_i]$, with $\overline \eta_i$ selected such that the ratios of $\overline \eta_i$ between generators are $1:2:3:\dots:10$ and
$\|[\overline\eta_{1},\overline\eta_{2},\dots]\| = \overline \eta = 0.01\textnormal{Hz}$.
The meaning of $\overline \eta$ here is consistent with that in Definition \ref{Definition: Input-to-state-stability with restrictions} and Theorem \ref{Theorem: ISS under biased leaky integral control}. At each generator $i$, the noise has non-zero mean $\overline \eta_i/2$ (inducing a constant measurement bias) and variance $\sigma_{\eta,i}^2 = \overline \eta_i^2/12$.

Figure \ref{fig:freq} (solid plots) shows the frequency at generator G1, and Figure \ref{fig:power-noisy} shows the changes in active-power outputs of all the generators under such a measurement noise.
Observe from Figures \ref{fig:freq:DI}--\ref{fig:freq:LI-5-3} and Figures~\ref{fig:power:DI-noisy}--\ref{fig:power:LI-5-3-noisy} that leaky integral control is more robust to measurement noise than pure integral control. Figures~\ref{fig:power:DAI-noisy} and \ref{fig:power:LI-5-3-noisy} show that the DAI control is even more robust than the leaky integral control in terms of generator power outputs, which is not surprising since the averaging process between neighboring DAI controllers can effectively mitigate the effect of noise -- thanks to communication.

\subsection{Impacts of leaky integral control parameters}

Next we investigate the impacts of inverse DC gains $K_i$ and time constants $T_i$ on the performance of leaky integral control.

First, we fix the integral time constant $T_i = \tau = 0.05\textnormal{s}$ for every generator $i$, and tune the gains $K_i = k$ for generators G3, G5, G6, G9, G10; $K_i = 2k$ for other generators to ensure the same asymptotic power sharing as above.
The following metrics of controller performance are calculated for the frequency at generator G1: (i) the steady-state frequency error without noise; (ii) the convergence time without noise, which is defined as the time when frequency error enters and stays within $[0.95, 1.05]$ times its steady state; and (iii) the frequency root-mean-square-error (RMSE) from its nominal steady state, calculated over 60--80 seconds (the average RMSE over 100 random realizations is taken).
The RMSE results from measurement noise $\eta_i(t)$ generated every second at every generator $i$ from a uniform distribution on $[-\overline \eta_i, \overline \eta_i]$, where the meaning of $\overline \eta_i$ is the same as in Section \ref{subsec:sim:noisy}; $\eta_i(t)$ has zero mean so that the performance in mitigating steady-state bias and noise-induced variance can be observed separately.
Figure \ref{fig:tune_K} shows these metrics as functions of $k$. It can be observed that the steady-state error increases with $k$, as predicted by Lemma \ref{Lemma: Steady-state frequency}; convergence is faster as $k$ increases, {in agreement with Theorem \ref{Theorem: ES of leaky integral control}; 
and robustness to measurement noise is improved as $k$ increases, as predicted by Theorem \ref{Theorem: ISS under biased leaky integral control} and Corollary \ref{cor:H2-2}.

\begin{figure}[!t]
\centering
\includegraphics[width=0.42\textwidth]{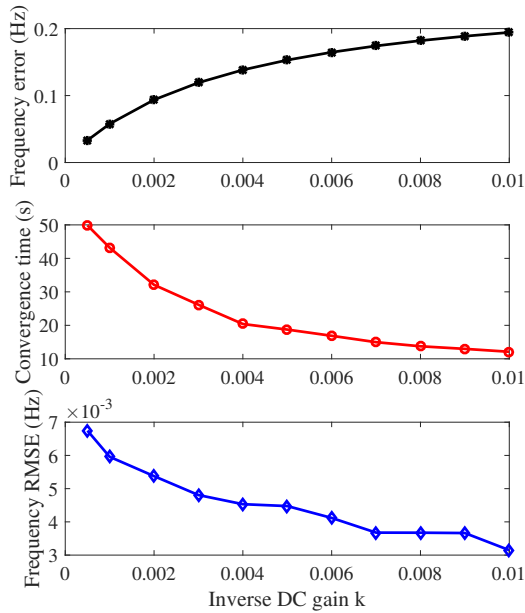}
\caption{Steady-state error (upper), convergence time (middle), and RMSE (lower) of frequency at generator G1, as functions of the gain $k$ for leaky integral control. The time constants are $T_i = \tau = 0.05\textnormal{s}$ for all generators.}\label{fig:tune_K}
\end{figure}

Next, we  tune the integral time constants $T_i = \tau$ for all generators and fix $k=0.005$, i.e., $K_i = 0.005$ for G3, G5, G6, G9, G10 and $K_i = 0.01$ for other generators, for a balance between steady-state and transient performance.
Since the steady state is independent from $\tau$, only the convergence time (measured for the case without noise) and RMSE (taken as the average of 100 runs with different realizations of noise) of frequency at generator G1 are shown in Figure \ref{fig:tune_T}.
It can be observed that convergence is faster as $\tau$ decreases, which is {in line with} Theorem \ref{Theorem: ES of leaky integral control}. Robustness to measurement noise is improved as $\tau$ increases, which is {in line with} Theorem \ref{Theorem: ISS under biased leaky integral control} and {predicted by} Corollary \ref{cor:H2-2}.

Finally, we discuss performance degradation if the response time of leaky integral controller is smaller than the actuation response time.
The generator turbine-governor dynamics can be modeled as first or second-order transfer functions, with dominant time constants in the range of $[0.25\,\textup{s},2.5\,\textup{s}]$ for hydraulic turbines and $[4\,\textup{s},7\,\textup{s}]$ for steam turbines \cite[Chapter 9]{PK:94}.
%
The analogous time constant for our controller corresponds to the parameter ratio $T_i/K_i$. For the simulations in Figures~\ref{fig:freq}--\ref{fig:power-noisy}
this ratio was chosen as $10\,\textup{s}$ for generators G3, G5, G6, G9, G10 and of $5\,\textup{s}$ for others. Thus, they are compatible with actuation through steam and hydraulic turbines. If this was not the case, the controllers have to be slowed down and their performance can be inferred through Figures~\ref{fig:tune_K} and \ref{fig:tune_T}.
%
Finally, we stress that the proven robustness guarantees, i.e., input-to-state-stability of the nonlinear model, will not be at stake, provided that the initial conditions and the maximum noise magnitude are those characterized in the proof of Theorem~\ref{Theorem: ISS under biased leaky integral control}.

\begin{figure}[!t]
\centering
\includegraphics[width=0.42\textwidth]{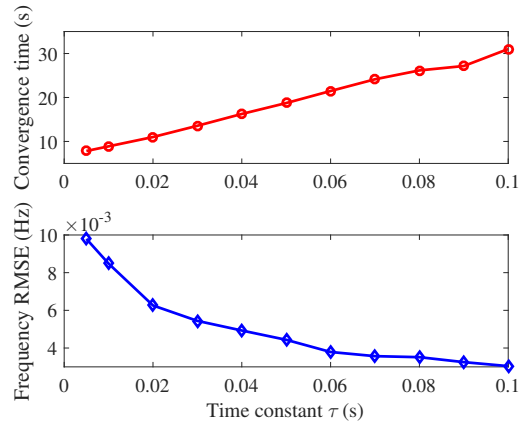}
\caption{Convergence time (upper) and RMSE (lower) of frequency at generator G1, as functions of the time constant $T_i = \tau$ for leaky integral control. The gains $K_i$ are $0.005$ for G3, G5, G6, G9, G10 and $0.01$ for other generators.}\label{fig:tune_T}
\end{figure}

\subsection{Tuning Recommendations} \label{subsec:tuning}
\label{subsec: Tuning Recommendations}

Our results quantifying the effects of the gains $K$ and $T$ on the system behavior lead to a number of insights about tuning the gains in a practical setting.
Specifically, a possible approach is as follows.
First, the ratios between the values $K_{i}^{-1}$ can be determined using Corollary~\ref{Corollary: steady-state power sharing} and knowledge about the generator operation cost.
Second, a lower bound on the sum of these values $\sum_{i=1}^n K_i^{-1}$ can be obtained from Corollary~\ref{Corollary: banded frequency regulation} according to the required steady-state performance.
Since by Theorem~\ref{Theorem: ES of leaky integral control} larger gains $K_{i}$ are beneficial to faster convergence, it is preferable to set the values of $K_{i}^{-1}$ equal to the lower bound from Corollary~\ref{Corollary: banded frequency regulation}.
Note that in Corollary~\ref{Corollary: banded frequency regulation}, the value of $\varepsilon$ is normally specified in the grid code and is thus assumed to be known.
The grid code also specifies a worst case power imbalance $\sum_{i=1}^n{P_i}^*$ that frequency controllers have to counter-act before the system is re-dispatched.
Specifically in our simulations, we assumed an admissible frequency deviation $\varepsilon = 0.3\textnormal{Hz}= 0.005\textnormal{pu}$, a worst-case power imbalance $\sum_{i=1}^n{P_i}^* = 1800\textnormal{MW} = 18\textnormal{pu}$ (approximately the simultaneous loss of the two largest generators), and $\sum\nolimits_{i=1}^{n} D_{i} = 2100\textnormal{pu}$ based on practical generator droop settings and load damping values. As a result of Corollary~\ref{Corollary: banded frequency regulation}, we obtained $\sum_{i=1}^n K_i^{-1} = 1500\textnormal{pu}$, which together with Corollary~\ref{Corollary: steady-state power sharing} leads to our choice of $K_i = 0.005$ for generators G3, G5, G6, G9, G10 and $0.01$ for the others.
Third, with the inverse gains $K_i^{-1}$ fixed, the time constants $T_i$ can be determined to strike a desired trade-off between frequency convergence rate and noise rejection. We outline two possible approaches below based on Theorem~\ref{Theorem: ISS under biased leaky integral control} or simulation data.

One possible approach to determine $T_{i}$ is foreshadowed by the proof of Theorem~\ref{Theorem: ISS under biased leaky integral control}.
The maximum noise magnitude  $\bar\eta$ (for which input-to-state stability can be established  in Theorem~\ref{Theorem: ISS under biased leaky integral control}) is linear in $\beta_1/\beta_2$, which are both defined  as functions of $T$ in the proof of Lemma~\ref{Lemma: Positivity of V with cross-terms}. From their definitions, one learns that $\bar\eta$ is a convex function of each of the values of $T$.
By requiring that the value of $\bar\eta$ exceeds the sensor noise estimate, one can then finds bounds on the values of $T_{i}$.
Within these bounds one should select the lowest values of $T_{i}$, as this is both beneficial for a faster convergence rate $\hat\alpha$ and a smaller deviation due to the disturbance $\gamma \bar\eta^2$, as seen in the proof of Theorem~\ref{Theorem: ISS under biased leaky integral control}.

If the system under investigation makes the above considerations for $T$ infeasible, an alternative tuning approach for $T$ relies on simulation data.
For example, consider the simplified case presented in Figure~\ref{fig:tune_T}, where there is a single time constant $\tau = T_i$ for all the generators $i$ to be tuned.
By means of regression methods, one can approximate the relationships between the frequency convergence time $T_{\textnormal{conv}}$, the frequency RMSE $f_{\textnormal{RMSE}}$, and the gain $\tau$ via the functions
\begin{align*}
T_{\textnormal{conv}} (\tau) &= a \tau + b \\
f_{\textnormal{RMSE}} (\tau) &= c e^{-\alpha \tau} + d
\end{align*}
where $a>0$, $b \in \real$, $c>0$, $d \in \real$, $\alpha>0$ are constants. The time constant $\tau$ can then be chosen according to the criterion
\begin{align*}
\min_{\tau \geq 0} \quad \gamma \, T_{\textnormal{conv}}  (\tau) +   f_{\textnormal{RMSE}} (\tau)
\end{align*}
where $\gamma>0$ is a trade-off parameter selected according to the relative importance of convergence time and noise robustness. The unique optimal solution to this trade-off criterion is
\begin{align*}
\tau^* = \max\left\{\frac{1}{\alpha}\log\left(\frac{\alpha c}{\gamma a}\right),~0\right\}.
\end{align*}


\section{Summary and Discussion}
\label{sec: summary}

In the following, we summarize our findings and the various trade-offs that need to be taken into account for the tuning of the proposed leaky integral controller \eqref{eq: leaky - integral control}.

From the discussion following the Laplace-domain representation \eqref{eq: leaky - integral control - transfer function}, the gains $K_{i}$ and $T_{i}$ of the leaky integral controller \eqref{eq: leaky - integral control} can be understood as interpolation parameters for which the leaky integral controller reduces to a pure integrator ($K_{i} \searrow 0$) with gain $T_{i}$, a proportional (droop) controller ($T_{i} \searrow 0$) with gain $K_{i}^{-1}$, or no control action ($K_{i},T_{i} \nearrow \infty$).
Within these extreme parameterizations, we found the following trade-offs:
The steady-state analysis in Section~\ref{Subsec: steady-state analysis} showed that proportional power sharing and banded frequency regulation is achieved for any choice of gains $K_{i}>0$: their sum gives a desired steady-state frequency performance (see Corollary~\ref{Corollary: banded frequency regulation}), and their ratios give rise to the desired proportional power sharing (see Corollary~\eqref{Corollary: steady-state power sharing}). However, a vanishingly small gain $K_{i}$ is required for asymptotically exact frequency regulation (see Corollary~\ref{Corollary: steady-state power optimality}), i.e., the case of integral control. Otherwise, the net load is always underestimated.
With regards to stability, we inferred global stability for vanishing $K_{i} \searrow 0$ (see Theorem~\ref{Theorem: global convergence}) but also an absence of robustness to measurement errors as in \eqref{eq: biased - integral control}. On the other hand, for positive gains $K_{i}>0$ we obtained nominal local exponential stability (see Theorem~\ref{Theorem: ES of leaky integral control}) with exponential rate as a function of $K_{i}/T_{i}$ and robustness (in the form of exponential ISS with restrictions) to bounded measurement errors (see Theorem~\ref{Theorem: ISS under biased leaky integral control}) with {increasing (respectively, non-decreasing) robustness margins to measurement noise as $K_{i}$ (or $T_{i}$)} become larger.
From a $\mathcal H_{2}$-performance perspective, we could qualitatively (under homogeneous parameter assumptions) confirm these results for the linearized system. In particular, we showed that measurement disturbances are increasingly suppressed for larger gains $K_{i}$ and $T_{i}$ (see Corollary~\ref{cor:H2-2}), but for sufficiently large power disturbances a particular {choice} of gains $K_{i}$ together with sufficiently small time constants $T_{i}$ optimizes the transient performance (see Corollary~\ref{cor:H2-3}), i.e., the case of droop control.

Our findings, especially the last one, pose the question whether the leaky integral controller \eqref{eq: leaky - integral control} actually improves upon proportional (droop) control (the case $T_{i}=0$) with sufficiently large droop gain $K_{i}^{-1}$. The answers to this question can be found in practical advantages:
 $(i)$  leaky integral control obviously low-pass filters measurement noise;
 $(ii)$ has a finite bandwidth thus resulting in a less aggressive control action more suitable for slowly-ramping generators; and
 $(iii)$ is not susceptible to wind-up (indeed, a proportional-integral control action with anti-windup reduces to a lag element \cite{franklin1994feedback}).
 $(iv)$ Other benefits that we did not touch upon in our analysis are related to classical loop shaping; e.g., the frequency for the phase shift can be specified for leaky integral control \eqref{eq: leaky - integral control} to give a desired phase margin (and thus also practically relevant delay margin) where needed for robustness or overshoot.

In summary, our lag-element-inspired leaky integral control is fully decentralized, stabilizing, and can be tuned to achieve robust noise rejection, satisfactory steady-state regulation, and a desirable transient performance with exponential convergence.
We showed that these objectives are not always aligned, and trade-offs have to be found. {Our tuning recommendations are summarized in Section~\ref{subsec: Tuning Recommendations}.}
From a practical perspective, we recommend to tune the leaky integral controller towards robust steady-state regulation and to address transient performance with related lead-element-inspired controllers \cite{jpm2017cdc}.

We believe that the aforementioned extension of the leaky integrator with lead compensators is a fruitful direction for future research. Another relevant direction is a rigorous analysis of decentralized integrators with dead-zones that are often used by practitioners (in power systems and beyond) as alternatives to finite-DC-gain implementations, such as the leaky integrator. Finally, all the presented results can and should be extended to more detailed higher-order power system models.


\section{Acknowledgements}

 The authors would like to thank Dominic Gro{\ss} for various helpful discussions that improved the presentation of the paper.


 \newcommand{\setthesubsection}{\thesection.\Roman{subsection}}
\appendix

\subsection{Technical lemmas}
\label{Sec: appendix}

We recall  several technical lemmas used in the main text.
\begin{lemma}[Matrix cross-terms] \label{Lemma: removal of matrix crossterms}
\cite[Lemma~15]{weitenberg2017exponential}
Given any four matrices $A,$ $B,$ $C$ and $D$ of appropriate dimensions,
\[ M := \begin{bmatrix} A & B^{\top} C \\ C^{\top} B & D \end{bmatrix} \geq
        \begin{bmatrix} A - B^{\top} B & 0 \\ 0 & D - C^{\top} C \end{bmatrix} =: M'. \]
\end{lemma}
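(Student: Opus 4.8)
The plan is to prove the claim by showing that the difference $M-M'$ is positive semidefinite, by exhibiting it as a Gram matrix. First I would simply subtract the two block matrices:
\[
M - M' \;=\; \begin{bmatrix} B^{\top}B & B^{\top}C \\ C^{\top}B & C^{\top}C \end{bmatrix}.
\]
Note that this difference is symmetric regardless of whether $A$ and $D$ are, since $(B^{\top}B)^{\top}=B^{\top}B$, $(C^{\top}C)^{\top}=C^{\top}C$ and $(B^{\top}C)^{\top}=C^{\top}B$; so the Loewner inequality $M\ge M'$ is equivalent to $M-M'\ge 0$, which is what we establish.

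Next I would observe that the mere existence of the off-diagonal block $B^{\top}C$ forces $B$ and $C$ to have the same number of rows, so the horizontal concatenation $N:=\begin{bmatrix} B & C\end{bmatrix}$ is well defined, and a direct block multiplication gives
\[
M - M' \;=\; \begin{bmatrix} B^{\top} \\ C^{\top} \end{bmatrix}\begin{bmatrix} B & C\end{bmatrix} \;=\; N^{\top}N \;\ge\; 0,
\]
since any matrix of the form $N^{\top}N$ is positive semidefinite. Equivalently, for any vector $z=\mathrm{col}(x,y)$ partitioned conformably with $M$, one computes $z^{\top}(M-M')z = x^{\top}B^{\top}Bx + 2\,x^{\top}B^{\top}Cy + y^{\top}C^{\top}Cy = \|Bx+Cy\|^{2}\ge 0$. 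Either route yields $M-M'\ge 0$, hence $M\ge M'$, which is the assertion.

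Since the statement is a one-line linear-algebra identity, there is no substantive obstacle here; the only point requiring a little care is the block bookkeeping — checking that $B^{\top}C$, and therefore the concatenation $\begin{bmatrix} B & C\end{bmatrix}$, is dimensionally well posed, and that the partition of the test vector $z$ matches the $(1,1)$/$(2,2)$ block structure of $M$. I would state this explicitly so the argument is self-contained, and then conclude. (Alternatively, one may simply invoke the cited reference \cite[Lemma~15]{weitenberg2017exponential}, but the completion-of-squares proof above is short enough to include directly.)
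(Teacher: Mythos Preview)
Your proof is correct: writing $M-M'=\begin{bmatrix}B^{\top}\\ C^{\top}\end{bmatrix}\begin{bmatrix}B & C\end{bmatrix}\ge 0$ is exactly the completion-of-squares argument underlying this lemma. The paper itself does not supply a proof but simply cites \cite[Lemma~15]{weitenberg2017exponential}; your argument is the standard one and matches what that reference contains, so there is nothing to add.
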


\begin{lemma}[Bounding the potential function] \label{Lemma: Bounding the potential function}
\cite[Lemma~5]{weitenberg2017exponential}
Consider the Bregman distance $V_\delta := U(\delta) - U(\bar\delta) - \nabla U(\bar\delta)^{\top}(\delta-\delta^*)$.
The following properties hold for all $\delta, \bar\delta$ that satisfy $B^{\top} \delta, B^{\top} \bar\delta \in \Theta$:%
\begin{enumerate}%
	\item \label{enumitem:lem-activepower-bound-potdiff}%
	There exist positive scalars $\alpha_{1}$ and $\alpha_{2}$ such that
	\[ \alpha_{1} \|\delta-\delta^*\| \leq \|\nabla U(\delta) - \nabla U(\delta^*)\| \leq \alpha_{2} \|\delta-\delta^*\|. \]
	\item \label{enumitem:lem-activepower-bound-bregman}
	There exist positive scalars $\alpha_{3}$ and $\alpha_{4}$ such that
	\[ \alpha_{3} \|\delta-\delta^*\|^2 \leq V_\delta \leq \alpha_{4} \|\delta-\delta^*\|^2. \]
\end{enumerate}
\end{lemma}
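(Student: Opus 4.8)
The plan is to exploit the explicit form of the potential function $U(\delta) = -\vectorones[]^\top \Gamma \cos(\cB^\top \delta)$, for which $\nabla U(\delta) = \cB \Gamma \sin(\cB^\top \delta)$ and $\nabla^2 U(\delta) = \cB \Gamma \diag\!\big(\cos(\cB^\top \delta)\big) \cB^\top$. First I would observe that, since $\Theta$ is a box (hence convex) and $\cB^\top$ is linear, the whole segment $\delta_s := \delta^* + s(\delta - \delta^*)$, $s\in[0,1]$, satisfies $\cB^\top\delta_s \in \Theta$ whenever $\cB^\top\delta, \cB^\top\delta^* \in \Theta$, and that on $\Theta$ each diagonal entry $\cos((\cB^\top\delta_s)_\ell)$ lies in $[\sin\rho,1]$. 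Hence $\sin\rho\cdot\cB\Gamma\cB^\top \preceq \nabla^2 U(\delta_s) \preceq \cB\Gamma\cB^\top$ for all $s$. Since the grid is connected, $\cB\Gamma\cB^\top$ is a weighted Laplacian with kernel $\mathrm{span}(\vectorones[n])$ that is positive definite on $\mathrm{span}(\vectorones[n])^\perp$; consequently any averaged Hessian $\bar H := \int_0^1 \nabla^2 U(\delta_s)\,ds$ shares this kernel and admits bounds $0 < \underline a \le \lambda_{\rm min}\big(\bar H\big|_{\mathrm{span}(\vectorones[n])^\perp}\big)$ and $\lambda_{\rm max}(\bar H) \le \overline a$, with $\underline a := \sin\rho\cdot\lambda_{\rm min}\big(\cB\Gamma\cB^\top\big|_{\mathrm{span}(\vectorones[n])^\perp}\big)$ and $\overline a := \lambda_{\rm max}(\cB\Gamma\cB^\top)$ both independent of $\delta,\delta^*$.

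For Part~1, I would write $\nabla U(\delta) - \nabla U(\delta^*) = \bar H(\delta - \delta^*)$ via the fundamental theorem of calculus and use that, in the center-of-inertia coordinates, $\delta - \delta^* \in \mathrm{span}(\vectorones[n])^\perp$; then $\|\bar H(\delta-\delta^*)\|^2 = (\delta-\delta^*)^\top \bar H^2(\delta-\delta^*)$ is squeezed between $\underline a^2\|\delta-\delta^*\|^2$ and $\overline a^2\|\delta-\delta^*\|^2$, giving the claim with $\alpha_1 = \underline a$, $\alpha_2 = \overline a$. For Part~2, I would apply Taylor's theorem with integral remainder about $\delta^*$ to get $V_\delta = (\delta-\delta^*)^\top\big(\int_0^1 (1-s)\,\nabla^2 U(\delta_s)\,ds\big)(\delta-\delta^*)$; the weighting matrix is again symmetric PSD with kernel $\mathrm{span}(\vectorones[n])$ and, since $\int_0^1(1-s)\,ds = \tfrac{1}{2}$, with restricted eigenvalues between $\tfrac{1}{2}\underline a$ and $\tfrac{1}{2}\overline a$, so $\alpha_3 = \tfrac{1}{2}\underline a$, $\alpha_4 = \tfrac{1}{2}\overline a$ work. (If the reference point $\bar\delta$ in the statement is not taken equal to $\delta^*$, the same computation goes through verbatim with $\delta^*$ replaced by $\bar\delta$.)

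The only real subtlety — and the place where both standing hypotheses are genuinely needed — is securing the uniform, strictly positive lower eigenvalue bound $\underline a>0$: connectivity of the transmission grid is what makes $\cB\Gamma\cB^\top$ nonsingular on $\mathrm{span}(\vectorones[n])^\perp$, and the strictly positive margin $\rho$ in Assumption~\ref{Assumption: security constraint} is what upgrades $\cos>0$ to $\cos\ge\sin\rho>0$, without which the infimum of $\lambda_{\rm min}$ over admissible angle configurations could degenerate to zero. A further routine point, used if one wishes to phrase the estimates in terms of the edge variable $\cB^\top\delta$ rather than the nodal variable $\delta$, is the norm equivalence of $\|\delta-\delta^*\|$ and $\|\cB^\top(\delta-\delta^*)\|$ on $\mathrm{span}(\vectorones[n])^\perp$, which again follows from connectivity.
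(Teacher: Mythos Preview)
Your proof is correct. The paper itself does not supply a proof of this lemma; it is merely recalled from \cite[Lemma~5]{weitenberg2017exponential}, so there is no in-paper argument to compare against. Your approach---bounding the Hessian $\nabla^2 U(\delta_s)=\cB\Gamma\diag(\cos(\cB^\top\delta_s))\cB^\top$ between $\sin\rho\cdot\cB\Gamma\cB^\top$ and $\cB\Gamma\cB^\top$ along the segment, then invoking the fundamental theorem of calculus for Part~1 and Taylor's theorem with integral remainder for Part~2, while exploiting that $\delta-\delta^*\in\mathrm{span}(\vectorones[n])^\perp$ in the center-of-inertia coordinates---is exactly the standard route for this type of estimate and is what the cited reference does. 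Your identification of the two essential hypotheses (connectivity for positive definiteness of $\cB\Gamma\cB^\top$ on $\mathrm{span}(\vectorones[n])^\perp$, and the margin $\rho$ for the uniform lower bound $\cos\ge\sin\rho$) is also on point, as is your remark that the apparent mixing of $\bar\delta$ and $\delta^*$ in the stated definition of $V_\delta$ is a typo and that in the paper's applications one has $\bar\delta=\delta^*$.
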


\begin{lemma}[Positivity of $V$] \label{Lemma: Positivity of V with cross-terms}
Suppose that Assumption~\ref{Assumption: security constraint} holds and  $B^{\top} \delta \in \Theta$.
The Lyapunov function $V$ in \eqref{eq: lyapunov function with crossterm} satisfies
\[ \beta_1 \|x\|^2 \leq V(x) \leq \beta_2 \|x\|^2 \]
for some positive constants $\beta_1$ and $\beta_2$, with $x$ given in \eqref{eq: incremental state vector}, provided that $\epsilon$ is sufficiently small.
\end{lemma}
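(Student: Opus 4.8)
The plan is to bound each of the four summands of $V(x)$ in \eqref{eq: lyapunov function with crossterm} from above and from below by multiples of the three block norms $\|\delta-\delta^*\|^2$, $\|\omega-\omega^*\|^2$, and $\|p-p^*\|^2$, whose sum equals $\|x\|^2$ for $x$ as in \eqref{eq: incremental state vector}. Three of the four summands are already ``diagonal'' in these blocks, and the only coupling is through the $\epsilon$-weighted cross term; I would show that for $\epsilon$ sufficiently small this coupling is dominated by the strictly positive diagonal contributions, which delivers both inequalities at once.

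First I would dispose of the diagonal pieces. Since $M$ and $T$ are diagonal and positive definite, $\tfrac12\lambda_{\min}(M)\|\omega-\omega^*\|^2 \le \tfrac12(\omega-\omega^*)^\top M(\omega-\omega^*) \le \tfrac12\lambda_{\max}(M)\|\omega-\omega^*\|^2$, and similarly $\tfrac12\lambda_{\min}(T)\|p-p^*\|^2 \le \tfrac12(p-p^*)^\top T(p-p^*) \le \tfrac12\lambda_{\max}(T)\|p-p^*\|^2$. The term $U(\delta)-U(\delta^*)-\nabla U(\delta^*)^\top(\delta-\delta^*)$ is exactly the Bregman distance $V_\delta$ of Lemma~\ref{Lemma: Bounding the potential function}: Assumption~\ref{Assumption: security constraint} gives $\cB^\top\delta^*\in\Theta$, and the hypothesis of the present lemma supplies $\cB^\top\delta\in\Theta$, so part~\ref{enumitem:lem-activepower-bound-bregman} of that lemma yields $\alpha_3\|\delta-\delta^*\|^2 \le V_\delta \le \alpha_4\|\delta-\delta^*\|^2$. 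This is the crucial step, and the reason the restriction $\cB^\top\delta\in\Theta$ cannot be dropped: $U$ is nonconvex, so $V_\delta$ is comparable to $\|\delta-\delta^*\|^2$ only on $\Theta$, which is precisely what Lemma~\ref{Lemma: Bounding the potential function} packages.

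Next I would estimate the cross term. By Cauchy--Schwarz, submultiplicativity of the spectral norm, and part~\ref{enumitem:lem-activepower-bound-potdiff} of Lemma~\ref{Lemma: Bounding the potential function} (which gives $\|\nabla U(\delta)-\nabla U(\delta^*)\| \le \alpha_2\|\delta-\delta^*\|$ on $\Theta$), its absolute value is at most $\epsilon\,\alpha_2\,\lambda_{\max}(M)\,\|\delta-\delta^*\|\,\|\omega-\omega^*\|$, and Young's inequality bounds this in turn by $\tfrac12\epsilon\,\alpha_2\,\lambda_{\max}(M)\bigl(\|\delta-\delta^*\|^2+\|\omega-\omega^*\|^2\bigr)$. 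Summing the four estimates, $V(x)$ is bounded below by $\bigl(\alpha_3-\tfrac12\epsilon\,\alpha_2\lambda_{\max}(M)\bigr)\|\delta-\delta^*\|^2 + \bigl(\tfrac12\lambda_{\min}(M)-\tfrac12\epsilon\,\alpha_2\lambda_{\max}(M)\bigr)\|\omega-\omega^*\|^2 + \tfrac12\lambda_{\min}(T)\|p-p^*\|^2$, and above by the same expression with $\alpha_3\mapsto\alpha_4$, $\lambda_{\min}\mapsto\lambda_{\max}$, and the sign of the $\tfrac12\epsilon\,\alpha_2\lambda_{\max}(M)$ corrections flipped. Fixing $\epsilon$ so small that $\epsilon\,\alpha_2\lambda_{\max}(M)<\min\{2\alpha_3,\,\lambda_{\min}(M)\}$ makes all three coefficients in the lower bound strictly positive; then $\beta_1$ may be taken as their minimum and $\beta_2$ as the maximum of the three coefficients in the upper bound, which is the assertion.

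I do not expect a genuine obstacle: the one non-elementary ingredient is the comparison of the Bregman distance $V_\delta$ with $\|\delta-\delta^*\|^2$, and that is exactly what Lemma~\ref{Lemma: Bounding the potential function} provides. The remaining steps --- the weighted sandwich bounds for $M$ and $T$, the Cauchy--Schwarz/Young estimate of the cross term, and determining the admissible range of $\epsilon$ --- are routine. The only point demanding care is that every bound above is valid only where $\cB^\top\delta\in\Theta$, so this restriction must be carried through the statement and through all later applications of the lemma.
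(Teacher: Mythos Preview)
Your proposal is correct and follows essentially the same line as the paper's proof: bound the three diagonal summands via the eigenvalues of $M$, $T$ and the Bregman estimate of Lemma~\ref{Lemma: Bounding the potential function}\ref{enumitem:lem-activepower-bound-bregman}, then control the $\epsilon$-cross term and absorb it for small $\epsilon$. The only cosmetic difference is in how the cross term is handled: the paper rewrites it as a $2\times 2$ block quadratic form and invokes Lemma~\ref{Lemma: removal of matrix crossterms} before applying Lemma~\ref{Lemma: Bounding the potential function}\ref{enumitem:lem-activepower-bound-potdiff}, whereas you go straight through Cauchy--Schwarz and Young's inequality; the two routes yield slightly different explicit constants but are logically equivalent.
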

\begin{proof}
This proof follows the same line of arguments as the proof of \cite[Lemma~8]{weitenberg2017exponential}, but accounts for our slightly different Lyapunov function.
We will bound $V(x)$ in \eqref{eq: lyapunov function with crossterm} term-by-term.
The quadratic terms in $\omega - \omega^*$ and $p - p^*$ are easily bounded in terms of the eigenvalues of the matrices $M$ and $T$, respectively.
The term in $\delta$ and $\delta^*$ is addressed in the second statement of Lemma~\ref{Lemma: Bounding the potential function}.
These three terms lead to the early bound
\begin{multline*}
\min(\lambda_{\rm min}(M), \lambda_{\rm min}(T), \alpha_3) \|x\|^2 \leq
\left. V(x) \right|_{\epsilon = 0} \\
\leq \max(\lambda_{\rm max}(M), \lambda_{\rm max}(T), \alpha_4) \|x\|^2.
\end{multline*}%
The cross-term $\epsilon (\nabla U(\delta) - \nabla U(\delta^{*}))^\top M \omega$ can be written as
\[ \begin{pmatrix} \nabla U(\delta) - \nabla U(\delta^{*}) \\ \omega \end{pmatrix}^{\top}
   \begin{bmatrix} \vectorzeros[] & \frac\epsilon2 M \\
                   \frac\epsilon2 M & \vectorzeros[] \end{bmatrix}
   \begin{pmatrix} \nabla U(\delta) - \nabla U(\delta^{*}) \\ \omega \end{pmatrix}. \]
This allows us to apply Lemma~\ref{Lemma: removal of matrix crossterms}, which yields
\begin{multline*}
  -\|\nabla U(\delta) - \nabla U(\delta^{*})\|^2 - \lambda_{\rm max}(M)^2 \|\omega\|^2 \\
   \leq (\nabla U(\delta) - \nabla U(\delta^{*}))^\top M \omega \\ \leq
   \|\nabla U(\delta) - \nabla U(\delta^{*})\|^2 + \lambda_{\rm max}(M)^2 \|\omega\|^2.
\end{multline*}%
By applying the first statement of Lemma~\ref{Lemma: Bounding the potential function}, we can bound the entire Lyapunov function using
\begin{align*}
\beta_1 &= \min(\lambda_{\rm min}(M) - \epsilon \lambda_{\rm max}(M)^2, \lambda_{\rm min}(T), \alpha_3 - \epsilon \alpha_2^2) \\
\beta_2 &= \max(\lambda_{\rm max}(M) + \epsilon \lambda_{\rm max}(M)^2, \lambda_{\rm max}(T), \alpha_4 + \epsilon \alpha_2^2).
\end{align*}
Finally, we select $\epsilon$ sufficiently small so that $\beta_1 > 0$.
\end{proof}

\subsection{Proof of Corollaries}

We provide here the proof of  corollaries \ref{cor:H2-2} and \ref{cor:H2-3}.
\subsubsection{Proof of Corollary  \ref{cor:H2-2} }\label{app:cor-10}
\begin{proof}
For a given value of $\tau$, consider the function
\begin{equation}
    f(k)=n\alpha_6+\sum\nolimits_{i=1}^n\dfrac{-\alpha_1k +\alpha_2}{\alpha_3k^2+\alpha_4k+\alpha_5(\lambda_i)}
\end{equation}
where $\alpha_1=\sigma_\zeta^2/d$, $\alpha_2=\sigma_\eta^2$, $\alpha_3=2dm$, $\alpha_4=2d\left(m/d+d\tau\right)$,  $\alpha_5(\lambda_i)=2d(\tau+\lambda_i \tau^2)$, and $\alpha_6=\sigma_\zeta^2/2md$ {are all positive parameters}. The function $f(k)$ interpolates between $\|G_\text{leaky}\|^2_{\mathcal{H}_2}=f(k)$ and $\|G_\text{integrator}\|^2_{\mathcal{H}_2}=f(0)$.

We prove that if either power disturbances $\sigma_\zeta$ or measurement noise $\sigma_\eta$ equal zero, then $\|G_\text{leaky}\|^2_{\mathcal{H}_2}<\|G_\text{integrator}\|^2_{\mathcal{H}_2}$ holds for all $k>0$.
In presence of only measurement noise, i.e., when $\sigma_\zeta=0$ the function $f(k)$ reduces to
\begin{equation}
    f_\eta(k)=\sum\nolimits_{i=1}^n\dfrac{\alpha_2}{\alpha_3k^2+\alpha_4k+\alpha_5(\lambda_i)}
\end{equation}
whose derivative with respect to $k$ is
\begin{align}
    f_\eta'(k)=&-\sum\nolimits_{i=1}^n\dfrac{\alpha_2(2\alpha_3k+\alpha_4)}{(\alpha_3k^2+\alpha_4k+\alpha_5(\lambda_i))^2} \;.
\end{align}
Clearly, for all $k>0$, $f_\eta'(k)<0$. An analogous reasoning holds
when analyzing $\|G_\text{leaky}\|^2_{\mathcal{H}_2}$ as a function of $\tau$, which shows the second claimed statement.
Further, $f_\eta'(k)<0$ also implies that $\|G_\text{leaky}\|^2_{\mathcal{H}_2}=f_\eta(k)<f_\eta(0)=\|G_\text{integrator}\|^2_{\mathcal{H}_2}$

If only power disturbances are applied, i.e., when $\sigma_\eta=0$ in \eqref{eq:H2leaky} and \eqref{eq:H2integrator}, then $f(k)$ reduces to
\begin{equation}
    f_\zeta(k)=n\alpha_6-\sum\nolimits_{i=1}^n\dfrac{\alpha_1k}{\alpha_3k^2+\alpha_4k+\alpha_5(\lambda_i)}
\end{equation}
Clearly, for all $k>0$, $\|G_\text{leaky}\|^2_{\mathcal{H}_2}=f_\zeta(k)<f_\zeta(0)=\|G_\text{integrator}\|^2_{\mathcal{H}_2}$.
Therefore, since $\|G_\text{leaky}\|^2_{\mathcal{H}_2}=f(k)=f_\zeta(k)+f_\eta(k)$, it follows for all $k>0$ that $\|G_\text{leaky}\|^2_{\mathcal{H}_2}=f_\eta(k)+f_\zeta(k)<f_\eta(0)+f_\zeta(0)=\|G_\text{integrator}\|^2_{\mathcal{H}_2}$.
\end{proof}

\subsubsection{Proof of Corollary  \ref{cor:H2-3} }\label{app:cor-11}
\begin{proof}
First notice that for $\sigma_\eta^2-\sigma_\zeta^2{k}/{d}>0$ the first term of \eqref{eq:H2leaky} is always positive and thus $\|G_\text{leaky}\|_{\mathcal H_2}>\|G_\text{open loop}\|_{\mathcal H_2}$ for all $\tau$.
As a result, one can only improve the performance beyond open loop when $\sigma_\eta^2-\sigma_\zeta^2{k}/{d}<0$, which is equivalent to \eqref{eq:condition}.
The derivative of \eqref{eq:H2leaky} with respect to $\tau$ equals
\[
\frac{\partial}{\partial \tau}\|G_\text{leaky}\|_{\mathcal H_2}^2
\!=\!\sum_{i=1}^n\!\dfrac{-(\sigma_\eta^2-\dfrac{k}{d}\sigma_\zeta^2)2d(2\tau\lambda_i+1)}{\left(2d\left[mk^2 \!+\!\left(\dfrac{m}{d }\!+\!d\tau\right)k\!+\! \tau\!+\!\lambda_i\tau^2\right]\right)^2}.
\]
Therefore $\frac{\partial}{\partial \tau}\|G_\text{leaky}\|_{\mathcal H_2}^2>0$ whenever \eqref{eq:condition} holds.
It follows that the minimal norm in the limit when $\tau=0$.

We now compute the derivative of $f_\zeta(k)$ as
\begin{align}
f_\zeta'(k) 
&=\sum_{i=1}^n\frac{\alpha_1(\alpha_3k^2-\alpha_5(\lambda_i))}{(\alpha_3k^2+\alpha_4k+\alpha_5(\lambda_i))^2}\,.
\end{align}
Notice that $\tau=0$ implies $\alpha_5(\lambda_i)=\tau(1+\lambda_i\tau)=0$ so that
\[
f'_\zeta(k)\big|_{\tau=0} =\sum_{i=1}^n\frac{\alpha_1(\alpha_3k^2)}{(\alpha_3k^2+\alpha_4k)^2}\,,
\]
Thus, when considering $f_\eta$ and $f_\zeta$ for $\tau=0$, we get
\begin{align*}
f'(k)\big|_{\tau=0}&=f_\eta'(k)\big|_{\tau=0} + f_\zeta'(k)\big|_{\tau=0}\\
&=n\frac{\alpha_1\alpha_3k^2-2\alpha_2\alpha_3k-\alpha_2\alpha_4}
{(\alpha_3k^2+\alpha_4k)^2}\,.
\end{align*}
By setting $f'(k)\big|_{\tau=0}=0$, the optimal $k$ value is obtained as the unique positive root of the second-order polynomial
\begin{equation*}
p(k) \!=\! \alpha_1\alpha_3k^2-2\alpha_2\alpha_3k-\alpha_2\alpha_4
\!=\! 2m\left({\sigma_\zeta^2}k^2-2d\sigma_\eta^2k-\sigma_\eta^2\right),%
\end{equation*}
which is given explicitly by \eqref{eq:kstar}.
\end{proof}


\bibliographystyle{IEEEtran}
\bibliography{./mybiblio}


\begin{IEEEbiography}%
[{\includegraphics[width=1in,height=1.25in,clip,keepaspectratio]{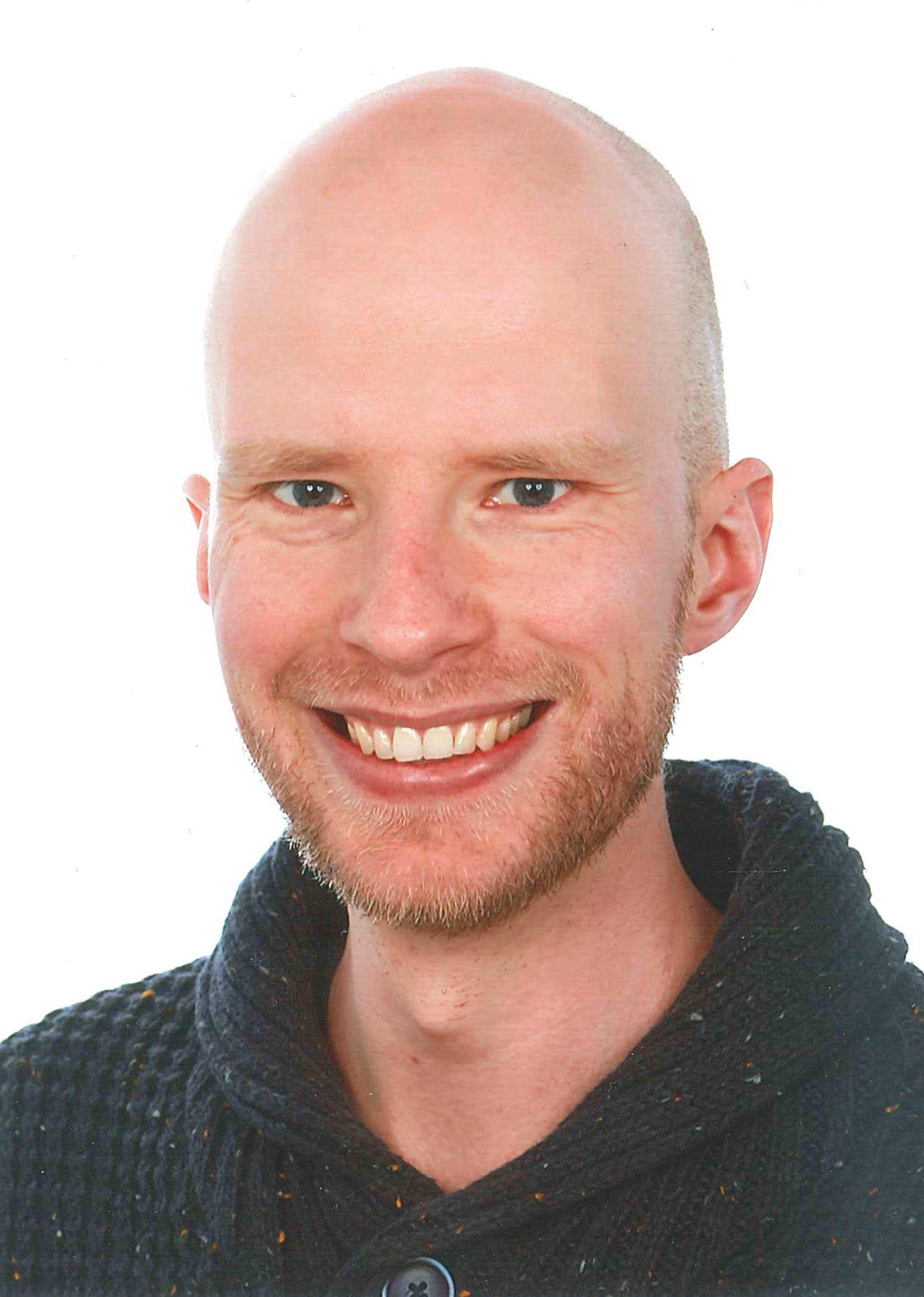}}]%
{Erik Weitenberg} received the B.Sc.\ degree in mathematics and the M.Sc.\ degree in mathematics with a specialization in algebra and cryptography from the University of Groningen, The Netherlands, in 2010 and 2012 respectively.
He is currently working toward the Ph.D.\ degree in control of cyber-physical systems at Univeristy of Groningen.
His current research interests include stability and robustness of networked and cyber-physical systems, with applications to power systems.
\end{IEEEbiography}

\begin{IEEEbiography}%
[{\includegraphics[width=1in,height=1.25in,clip,keepaspectratio]{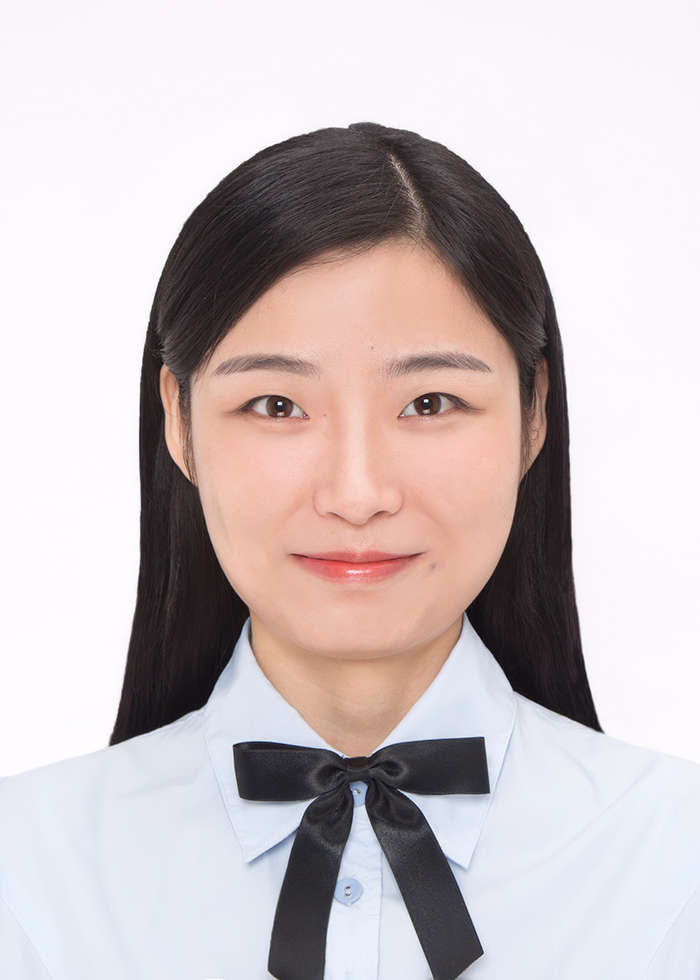}}]%
{Yan Jiang} is currently working toward the Ph.D. degree at the Department of Electrical and Computer Engineering and the M.S.E. degree at the Department of Applied Mathematics and Statistics, Johns Hopkins University. She received the B.Eng. degree in electrical engineering and automation from Harbin Institute of Technology in 2013, and the M.S. degree in electrical engineering from Huazhong University of Science and Technology in 2016. Her research interests lie in the area of control of power systems.
\end{IEEEbiography}

\begin{IEEEbiography}%
[{\includegraphics[width=1in,height=1.25in,clip,keepaspectratio]{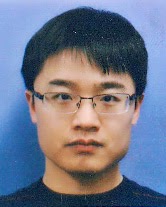}}]%
{Changhong Zhao} (S'12--M'15) is a researcher at the National Renewable Energy Laboratory. He received the B.Eng. degree in automation from Tsinghua University in 2010, and the PhD degree in electrical engineering from California Institute of Technology in 2016. He was a recipient of the Caltech Demetriades-Tsafka-Kokkalis PhD Thesis Prize, the Caltech Charles Wilts PhD Thesis Prize, and the 2015 Qualcomm Innovation Fellowship Finalist Award. His research interests include distributed control of networked systems, power system dynamics and stability, and optimization of power and multi-energy systems.
\end{IEEEbiography}

\begin{IEEEbiography}[{\includegraphics[width=1in,height=1.25in,clip,keepaspectratio]{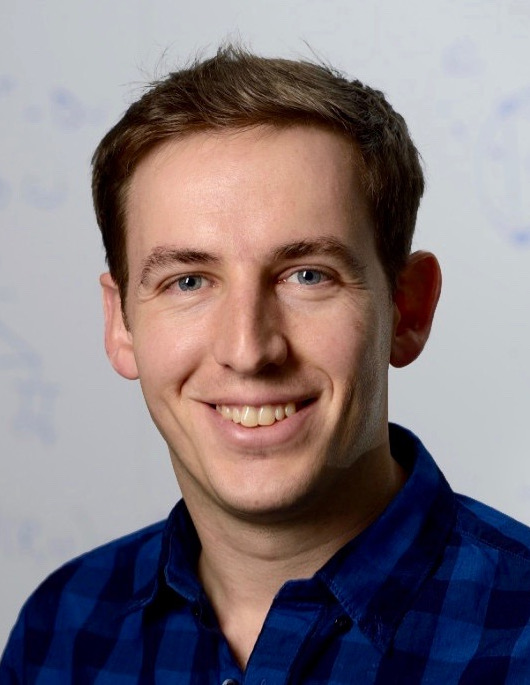}}]
{Enrique Mallada} (S'09-M'13) is an Assistant Professor of Electrical and Computer Engineering at Johns Hopkins University. Prior to joining Hopkins in 2016, he was a Post-Doctoral Fellow in the Center for the Mathematics of Information at Caltech from 2014 to 2016. He received his Ingeniero en Telecomunicaciones degree from Universidad ORT, Uruguay, in 2005 and his Ph.D. degree in Electrical and Computer Engineering with a minor in Applied Mathematics from Cornell University in 2014.
Dr. Mallada was awarded
the CAREER Award from the National Science Foundation (NSF) in 2018,
the ECE Director's PhD Thesis Research Award for his dissertation in 2014,
the Center for the Mathematics of Information (CMI) Fellowship from Caltech in 2014,
and the Cornell University Jacobs Fellowship in 2011.
His research interests lie in the areas of control, dynamical systems and optimization, with applications to engineering networks such as power systems and the Internet.
\end{IEEEbiography}

\begin{IEEEbiography}%
[{\includegraphics[width=1in,height =1.25in,clip,keepaspectratio]{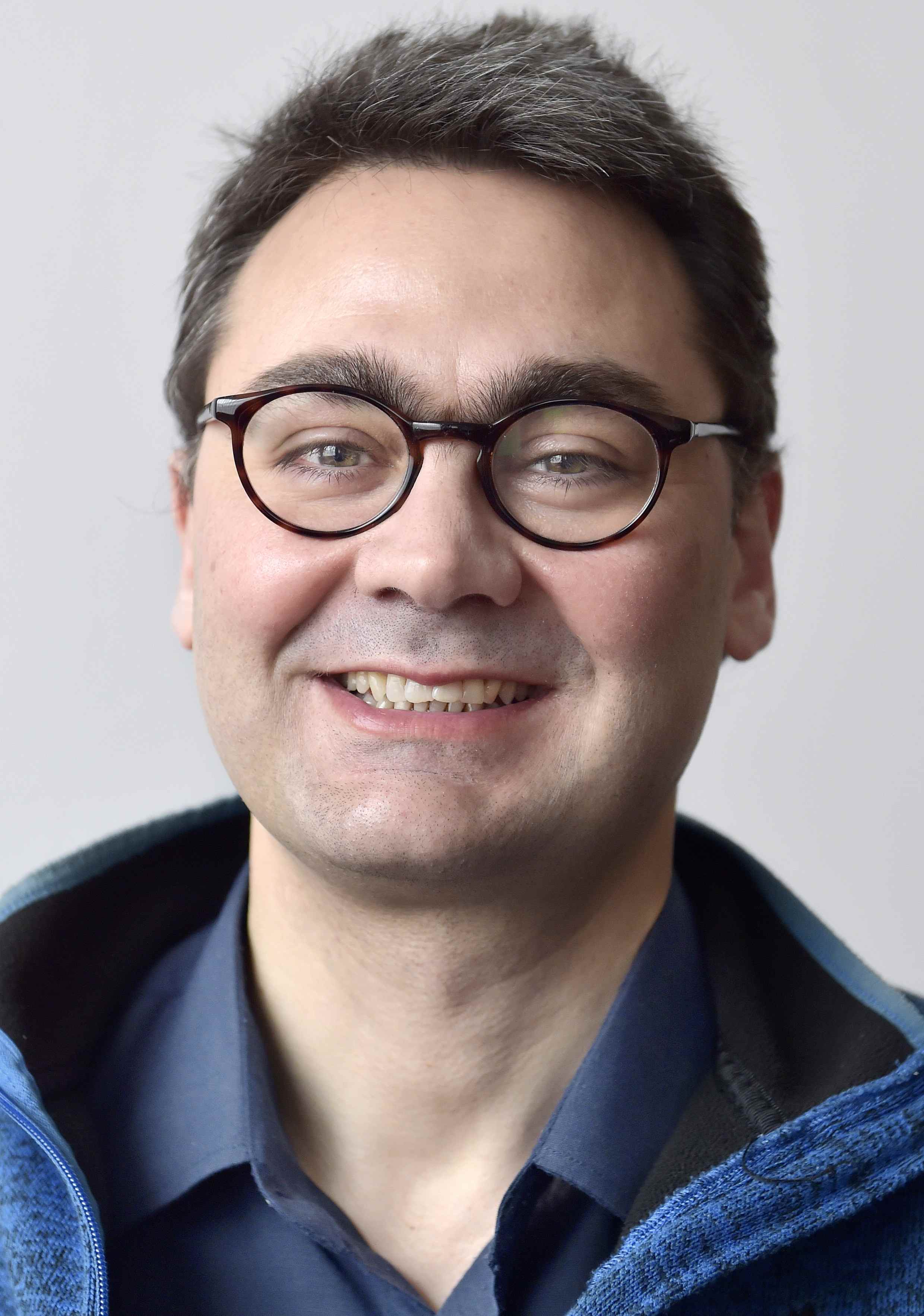}}]
{Claudio De Persis} is a Professor at the Engineering and Technology Institute, Faculty of Science and Engineering, University of Groningen, the Netherlands. He received the Laurea degree in Electronic Engineering in 1996 and the Ph.D. degree in System Engineering in 2000 both from the University of Rome ``La Sapienza", Italy.
Previously he held faculty positions at the Department of Mechanical Automation and Mechatronics,
University of Twente and  the Department of Computer, Control, and Management Engineering, University of Rome ``La Sapienza".
He was a Research Associate at the Department of Systems Science and Mathematics, Washington University, St. Louis, MO, USA, in 2000-2001,
and at the Department of Electrical Engineering, Yale University, New Haven, CT, USA, in 2001-2002.
His main research interest is in control theory,
and his recent research focuses on dynamical networks, cyberphysical systems, smart grids and resilient control. He was an Editor of the International Journal of Robust and Nonlinear Control (2006-2013),
an Associate Editor of the IEEE Transactions on Control Systems Technology (2010-2015), and of  the
IEEE Transactions on Automatic Control (2012-2015). He is currently an Associate Editor of Automatica (2013-present) and
of IEEE Control Systems Letters (2017-present).
\end{IEEEbiography}

\begin{IEEEbiography}%
[{\includegraphics[width=1in,height=1.25in,clip,keepaspectratio]{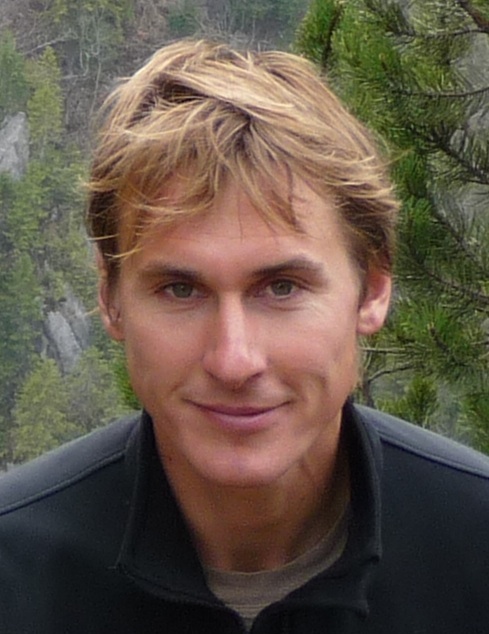}}]%
{Florian D\"{o}rfler} is an Assistant Professor in the Automatic Control Laboratory at ETH Z\"urich. He received his Ph.D. degree in Mechanical Engineering from the University of California at Santa Barbara in 2013, and a Diplom degree in Engineering Cybernetics from the University of Stuttgart in 2008. From 2013 to 2014 he was an Assistant Professor at the University of California Los Angeles. His primary research interests are centered around distributed control, complex networks, and cyber-physical systems currently with  applications in energy systems. His students were winners or finalists for Best Student Paper awards at the 2013 European Control Conference, the 2016 American Control Conference, and the 2017 PES PowerTech Conference. His articles received the 2010 ACC Student Best Paper Award, the 2011 O. Hugo Schuck Best Paper Award, the 2012-2014 Automatica Best Paper Award, and the 2016 IEEE Circuits and Systems Guillemin-Cauer Best Paper Award. He is a recipient of the 2009 Regents Special International Fellowship, the 2011 Peter J. Frenkel Foundation Fellowship, and the 2015 UCSB ME Best PhD award.
\end{IEEEbiography}


\end{document}